\pgfplotsset{compat=newest}
\crefname{equation}{}{}
\theoremstyle{thmstyleone}%
\newtheorem{theorem}{Theorem}
\newtheorem{proposition}[theorem]{Proposition}%
\newtheorem{corollary}[theorem]{Corollary}%
\theoremstyle{thmstyletwo}%
\newtheorem{lemma}{Lemma}%
\newtheorem{remark}{Remark}%
\theoremstyle{thmstylethree}%
\newcommand{\N}{\varmathbb{N}}
\newcommand{\R}{\varmathbb{R}}
\newcommand{\OO}{{\mathcal O}}
\newcommand{\T}{\mathcal{T}}
\newcommand{\abs}[1]{\lvert#1\rvert}
\newcommand{\norm}[1]{\lVert#1\rVert}
\newcommand{\ltwonorm}[1]{\norm{#1}_{L^2(\Omega)}}
\newcommand{\ltwonormdo}[1]{\norm{#1}_{L^2(\partial \Omega)}}
\newcommand{\honenorm}[1]{\norm{#1}_{H^1(\Omega)}}
\newcommand{\htwonorm}[1]{\norm{#1}_{H^2(\Omega)}}
\newcommand{\into}{\int_\Omega}
\newcommand{\half}{\frac{1}{2}}
\newcommand{\nhalf}{\nicefrac{1}{2}}
\newcommand{\dist}{\operatorname{dist}}
\renewcommand{\Re}{\operatorname{Re}}
\newcommand{\oq}{\bar q}
\newcommand{\ou}{\bar u}
\newcommand{\oz}{\bar z}
\newcommand{\dq}{\delta\mspace{-2mu}q}
\newcommand{\du}{\delta\mspace{-2mu}u}
\newcommand{\lh}{\abs{\ln h}}
\newcommand{\Lap}{\upDelta}
\renewcommand{\phi}{\varphi}
\newcommand{\Om}{\Omega}
\newcommand{\logLogSlopeTriangle}[6]
{
	
	\pgfplotsextra
	{
		\pgfkeysgetvalue{/pgfplots/xmin}{\xmin}
		\pgfkeysgetvalue{/pgfplots/xmax}{\xmax}
		\pgfkeysgetvalue{/pgfplots/ymin}{\ymin}
		\pgfkeysgetvalue{/pgfplots/ymax}{\ymax}
		
		\pgfmathsetmacro{\xArel}{#1}
		\pgfmathsetmacro{\yArel}{#3}
		\pgfmathsetmacro{\xBrel}{#1-#2}
		\pgfmathsetmacro{\yBrel}{\yArel}
		\pgfmathsetmacro{\xCrel}{\xArel}
		
		\pgfmathsetmacro{\lnxB}{\xmin*(1-(#1-#2))+\xmax*(#1-#2)} 
		\pgfmathsetmacro{\lnxA}{\xmin*(1-#1)+\xmax*#1} 
		\pgfmathsetmacro{\lnyA}{\ymin*(1-#3)+\ymax*#3} 
		\pgfmathsetmacro{\lnyC}{\lnyA+#4*(\lnxA-\lnxB)}
		\pgfmathsetmacro{\yCrel}{(\lnyC-\ymin)/(\ymax-\ymin)} 
		
		\coordinate (A) at (rel axis cs:\xArel,\yArel);
		\coordinate (B) at (rel axis cs:\xBrel,\yBrel);
		\coordinate (C) at (rel axis cs:\xCrel,\yCrel);
		
		\draw[#5]   (A)-- 
		(B)-- 
		(C)-- node[pos=0.5,anchor=west] {$#6$}
		cycle;
	}
}
\newcommand{\logLogSlopeTriangleInverse}[6]
{
	
	\pgfplotsextra
	{
		\pgfkeysgetvalue{/pgfplots/xmin}{\xmin}
		\pgfkeysgetvalue{/pgfplots/xmax}{\xmax}
		\pgfkeysgetvalue{/pgfplots/ymin}{\ymin}
		\pgfkeysgetvalue{/pgfplots/ymax}{\ymax}
		
		\pgfmathsetmacro{\xArel}{#1}
		\pgfmathsetmacro{\yArel}{#3}
		\pgfmathsetmacro{\xBrel}{#1-#2}
		\pgfmathsetmacro{\yBrel}{\yArel}
		\pgfmathsetmacro{\xCrel}{\xBrel}
		
		\pgfmathsetmacro{\lnxB}{\xmin*(1-(#1-#2))+\xmax*(#1-#2)} 
		\pgfmathsetmacro{\lnxA}{\xmin*(1-#1)+\xmax*#1} 
		\pgfmathsetmacro{\lnyA}{\ymin*(1-#3)+\ymax*#3} 
		\pgfmathsetmacro{\lnyC}{\lnyA+#4*(\lnxA-\lnxB)}
		\pgfmathsetmacro{\yCrel}{2*\yArel-(\lnyC-\ymin)/(\ymax-\ymin)} 
		
		\coordinate (A) at (rel axis cs:\xArel,\yArel);
		\coordinate (B) at (rel axis cs:\xBrel,\yBrel);
		\coordinate (C) at (rel axis cs:\xCrel,\yCrel);
		
		\draw[#5]   (A)-- 
		(B)-- node[pos=0.5,left,anchor=east] {$#6$}
		(C)-- 
		cycle;
	}
}
\begin{document}

\title[Neumann Optimal Control on Convex Domains]{Numerical Analysis for Neumann Optimal Control Problems on Convex Polyhedral Domains}


\author*[1]{\fnm{Johannes} \sur{Pfefferer}}\email{johannes.pfefferer@unibw.de}

\author[2]{\fnm{Boris} \sur{Vexler}}\email{vexler@tum.de}

\affil[1]{\orgdiv{Fakult\"at f\"ur Elektrische Energiesysteme und Informationstechnik}, \orgname{Universit\"at der Bundeswehr M\"unchen}, \orgaddress{\street{Werner-Heisenberg-Weg 39}, \city{Neubiberg}, \postcode{85579}, \country{Germany}}}
\affil[2]{\orgdiv{Department of Mathematics}, \orgname{Technical University of Munich, School of Computation, Information and Technology}, \orgaddress{\street{Boltzmannstr. 3}, \city{Garching b. Munich}, \postcode{85748}, \country{Germany}}}


\abstract{%
This paper is concerned with finite element error estimates for Neumann boundary control problems posed on convex and polyhedral domains. Different discretization concepts are considered and for each optimal discretization error estimates are established. In particular, for a full discretization with piecewise linear and globally continuous functions for the control and standard linear finite elements for the state optimal convergence rates for the controls are proven which solely depend on the largest interior edge angle. To be more precise, below the critical edge angle of $2\pi/3$, a convergence rate of two (times a log-factor) can be achieved for the discrete controls in the $L^2$-norm on the boundary. For larger interior edge angles the convergence rates are reduced depending on their size, which is due the impact of singular (domain dependent) terms in the solution. The results are comparable to those for the two dimensional case. However, new techniques in this context are used to prove the estimates on the boundary which also extend to the two dimensional case. Moreover, it is shown that the discrete states converge with a rate of two in the $L^2$-norm in the domain independent of the interior edge angles, i.e. for any convex and polyhedral domain. It is remarkable that this not only holds for a full discretization using piecewise linear and globally continuous functions for the control, but also for a full discretization using piecewise constant functions for the control, where the discrete controls only converge with a rate of one in the $L^2$-norm on the boundary at best.
At the end, the theoretical results are confirmed by several numerical experiments.
}

\keywords{Optimal control, Neumann boundary control, convex polyhedral domains, finite element method, discretization error estimates,}


\pacs[MSC Classification]{35J05, 49J20, 49M25, 65N15, 65M30}

\maketitle
\section{Introduction}\label{sec1}

In this paper we consider the following optimal control problem with control entering the Neumann boundary condition of a linear elliptic equation: 
\begin{subequations}\label{NeumannCon:eq:problem}
	\begin{equation}\label{NeumannCon:eq:cost}
		\text{Minimize } J(q,u) = \half\ltwonorm{u-u_d}^2 + \frac{\alpha}{2}\ltwonormdo{q}^2
	\end{equation}
	over control $q$ and state $u$ fulfilling the state equation
	\begin{equation}\label{NeumannCon:eq:state}
		\begin{aligned}
			-\Lap u + u&= 0 &\quad&\text{in } \Omega,\\
			\partial_n u &=q &\quad&\text{on } \partial \Omega,
		\end{aligned}
	\end{equation}
\end{subequations}
where $\Omega \subset \R^3$ is a convex polyhedral domain. Such problems are often referred as \textit{Neumann boundary control problems}. The precise functional analytic setting is discussed below.
For simplicity of presentation we do not include inequality constraints on the control variable. However, most of the results can be extended to problems with pointwise control constraints of the form
\begin{equation}\label{NeumannCon:eq:constraints}
	q_a \le q(s) \le q_b \quad \text{for almost all } s \in \partial \Omega
\end{equation}
with $q_a, q_b\in \R$. We will remark on such extensions below.

The goal of the paper is to provide precise numerical analysis for the finite element discretization of the above boundary control problem.
In particular, this is based on error estimates for an adjoint equation, which is also a linear elliptic equation with homogeneous Neumann boundary condition and an inhomogeneous right-hand side. Thus, we require certain results for the discretization of the equation
\begin{equation}\label{NeumannCon:eq:into:z}
	\begin{aligned}
		-\Lap z + z&= f &\quad&\text{in } \Omega,\\
		\partial_n z &=0 &\quad&\text{on } \partial \Omega.
	\end{aligned}
\end{equation}
Let $z_h$ be a Galerkin finite element approximation (with linear finite elements) of $z$. For the error analysis of the control problem error estimates on the boundary are required, i.e. estimates for $\ltwonormdo{z-z_h}$. The classical result for this error is
\[
\ltwonormdo{z-z_h} \le c h^{\frac{3}{2}} \htwonorm{z},
\]
which is optimal if we only require the $H^2$ regularity, i.e. $z \in H^2(\Omega)$. Depending on the geometry of the domain and the regularity of the right-hand side $f$ higher regularity of $z$ can be expected. A natural question is, if it is possible to prove
\begin{equation}\label{intro:eq:z_est_1}
	\ltwonormdo{z-z_h} \le c h^{\frac{3}{2}+s} \norm{z}_{H^{2+s}(\Omega)},
\end{equation}
for $0<s<\half$ provided $z \in H^{2+s}(\Omega)$. Although this is a natural extension of the $\OO(h^{\frac{3}{2}})$ result, it does not seem to be available in the literature. We prove an even stronger result, see \cref{FEM:Theorem:u-uhboundary} below, i.e.
\begin{equation}\label{intro:eq:z_est_2}
	\ltwonormdo{z-z_h} \le c h^{\frac{3}{2}+s} \norm{z}_{H^2_{-s}(\Omega)},
\end{equation}
for $0<s<\half$ provided that $z$ belongs to a weighted space $H^2_{-s}(\Omega)$ with the weight being the distance to the boundary. By fractional Hardy inequality, see \cref{theorem:HardyFractional} below, there holds $H^{2+s}(\Omega) \hookrightarrow H^2_{-s}(\Omega)$ and thus the result \eqref{intro:eq:z_est_1} follows. We also provide conditions on $z$ to possess the regularity $H^2_{-s}(\Omega)$. To this end we consider the critical exponent $\lambda_\Omega$ depending only on the edge openings of $\partial \Omega$, see  \cref{eq:lambda_Omega} below. Then for $s < s_\Omega = \min(\lambda_\Omega-1,\half)$ and $f\in H^1(\Omega)$ we prove $z \in H^2_{-s}(\Omega)$ and the corresponding a priori and discretization error estimates. To be more precise, we prove for every $s<s_\Omega$
\[
\ltwonormdo{z-z_h} \le c h^{\frac{3}{2}+s} \norm{f}_{H^1(\Omega)},
\]
see \cref{FEM:Corollary:u-uhboundary1}. Under the additional assumption that all interior edge angles are smaller then $\frac{2 \pi}{3}$ we prove
\[
\ltwonormdo{z-z_h} \le ch^2\lh\norm{f}_{H^1(\Omega)},
\]
see \cref{FEM:Corollary:u-uhboundary2}. Thus, we can achieve a convergence rate of two (times a $\log$-factor). These two results are comparable to those for the two dimensional case proven in \cite{ApelPfeffererRoesch:2015,Pfefferer:Thesis:2014}, except that there the right hand side $f$ needs to be H\"older continuous and the $\log$-factor has a larger exponent. In addition, our techniques, which are new in this context, seem to be more natural than those in the literature and also extend to the two dimensional case. They are based on weighted error estimates with weight being the (regularized) distance function to the boundary, cf. our recent paper on numerical analysis for Dirichlet control problems \cite{PfeffererVexler:2024}. The main tool is a weighted best approximation result, see \cref{FEM:Neumann:weighted_best_approximation}, which is also of an independent interest.

We then use the finite element error estimates on the boundary for the adjoint state, to obtain optimal discretization error estimates for the discretization of the optimal control problem. We discuss different discretization concepts: full discretization based on piecewise constant functions for the control and full discretization based on piecewise linear and globally continuous functions for the control. Thereby, the state is always approximated by standard linear finite elements. It is well known that the full discretization with piecewise linear functions coincides in the case of no control bounds with the concept of variational discretization, which was introduced in \cite{Hinze:2005} for distributed control problems. Our main convergence results for the discrete controls are as follows: Let $\oq$ be the solution of the optimal control problem \cref{NeumannCon:eq:problem} and $\oq_h$ its discrete counterpart, which either belongs to $Q_h^0$, the space of piecewise constant functions, or to $V_h^\partial$ the space of piecewise linear and globally continuous functions. Then, in \cref{theorem:main1} we show that 
\begin{equation}\label{eq:main1}
\norm{\bar q-\bar q_h}_{L^2(\partial\Omega)}\le
\begin{cases}
	ch\norm{u_d}_{L^2(\Omega)} &\text{if } \bar q_h \in Q_h^0,\\
	ch^{\frac32}\norm{u_d}_{L^2(\Omega)} &\text{if } \bar q_h \in V_h^\partial.
\end{cases}
\end{equation}
Both estimates are optimal if the desired state $u_d$ only belongs to $L^2(\Omega)$. If we assume more regular desired states, such as $u_d\in H^1(\Omega)$, we will show for $\bar q_h\in V_h^\partial$ the improved convergence estimates
\begin{equation}\label{eq:main2}
	\norm{\bar q-\bar q_h}_{L^2(\partial\Omega)}\le
	\begin{cases}
		c h^{\frac32+s}\norm{u_d}_{H^1(\Omega)}&\quad\text{for } 0<s<s_\Omega,\\
		c h^{2}\abs{\ln h}\norm{u_d}_{H^1(\Omega)}&\quad\text{for }\lambda_\Omega >\frac32,
	\end{cases}
\end{equation}
which are again optimal. Moreover, we show optimal convergence estimates for the states: Let $\bar u$ be the optimal state, i.e., the solution to \cref{NeumannCon:eq:state} with right hand side $\bar q$ and let $\bar u_h$ be the optimal discrete state, which solves the corresponding discrete state equation with right hand side $\bar q_h$. Then we show the estimate 
\[
	\norm{\bar u - \bar u_h}_{L^2(\Omega)}\le c h^2 \norm{u_d}_{L^2(\Omega)},
\]
i.e., we obtain a convergence rate of two. Let us emphasize that this estimate holds for any convex and polyhedral domain, for desired states $u_d$, which only belong to~$L^2(\Omega)$, and for both discretization strategies.

Before discussing the existing literature on Neumann boundary control problems, let us briefly comment on possible extensions of the results to problems with pointwise inequality constraints on the control variable. All the convergence results for the discrete controls from above can be extended to problems with pointwise control constraints in a certain way. The convergence rates from \cref{eq:main1} remain valid. In case of $\bar q_h\in V_h^\partial$ only a standard structural assumption on the optimal control $\bar q$ and more regular desired states $u_d\in L^p(\Omega)$ with $p>3$ are required in addition. The estimates of \cref{eq:main2} can be transferred to the concept of variational discretization, which was already mentioned above, and to the postprocessing approach, which was introduced in \cite{MeyerRoesch:2004} for distributed optimal control problems. For the latter approach, one also requires the structural assumption on the optimal control $\bar q$ and more regular desired states $u_d\in L^p(\Omega)$ with $p>3$.
In contrast, the convergence result for the states heavily relies on the fact that no control bounds are present. Its proof is based on a duality argument, where the dual problem is an optimal control problem with a certain desired state in $L^2(\Omega)$, and the estimates of \cref{eq:main1}.

Let us briefly review the existing literature. First results on numerical analysis for Neumann boundary control problems can be found in \cite{Geveci:1979}. A full discretization using piecewise constant functions for the control is considered in \cite{Geveci:1979,CasasMateosTroeltzsch:2005}. A full discretization based on piecewise linear and globally continuous functions for the control is discussed in \cite{CasasMateos:2008}. The concept of variational discretization applied to Neumann boundary control problems can be found in \cite{CasasMateos:2008,HinzeUlrich:2009,Pfefferer:Thesis:2014}. The postprocessing approach applied to Neumann boundary control problems is analyzed in \cite{MateosRoesch:2011,ApelPfeffererRoesch:2015,KrumbiegelPfefferer:2015,Pfefferer:Thesis:2014,ApelWinklerPfefferer:2016,ApelWinklerPfefferer:2018}.
We note that in \cite{CasasMateosTroeltzsch:2005,CasasMateos:2008,KrumbiegelPfefferer:2015,Pfefferer:Thesis:2014} not only linear quadratic Neumann boundary control problems are considered but even problems with semilinear state equation. In \cite{BeuchlerHoferWachsmuthWurst:2015,WachsmuthWurst:2016}, a boundary concentrated finite element method for Neumann boundary control problems is introduced and analyzed. The discussion of Neumann boundary control problems with controls in $H^{-\nhalf}(\partial\Omega)$ and the corresponding numerical analysis can be found in \cite{ApelSteinbachWinkler:2016}.
Most likely, the results from \cite{ApelPfeffererRoesch:2015,Pfefferer:Thesis:2014, KrumbiegelPfefferer:2015,ApelWinklerPfefferer:2018} can be compared best with ours in a certain sense. In those references quasi-optimal discretization error estimates are proven for the post-processing approach employing quasi-optimal finite element error estimates in the $L^2$-norm on the boundary. Moreover, in \cite{Pfefferer:Thesis:2014} quasi-optimal discretization error estimates are established for the concept of variational discretization. Whereas the references \cite{ApelPfeffererRoesch:2015,Pfefferer:Thesis:2014,KrumbiegelPfefferer:2015} are restricted to polygonal domains, in \cite{ApelWinklerPfefferer:2018} polyhedral domains are considered. In those references even gradually refined meshes are used to retain the full order of convergence.
However, in \cite{ApelWinklerPfefferer:2018} it was not explicitly discovered that the convergence rate in convex domains solely depend on the size of the largest interior edge angle and hence corner singularities are not relevant. Moreover, as already mentioned above for the 2D case, the $\log$-factor is slightly worse there compared to our result. We also think that our new approach to prove the finite element error estimates on the boundary might stimulate further research activities in this direction.

The structure of the paper is as follows: In \cref{Sec:2} we introduce the function space setting and establish regularity results for the solutions of the state equations (primal and adjoint), which are needed for the numerical analysis later on. \cref{Sec:3} summarizes various facts for the optimal control problem such as the reduced form, the optimality system and higher regularity of the solution. In \cref{Sec:4} one can find the discretization of the state equations, which is based on linear finite elements, and corresponding discretization error estimates. In particular, optimal discretization error estimates on the boundary are established in various norms for different regularity requirements of the state. Moreover, a best approximation result in weighted norms is shown, which is essential for improved error estimates on the boundary. The discretization of the optimal control problem and corresponding discretization error estimates for the directional derivatives and the optimal solutions are contained in \cref{Sec:5}. Numerical experiments, which underline the theoretical findings of \cref{Sec:5}, are collected in \cref{Sec:6}.

Let us finally note that throughout the whole paper, the constant $c>0$ is a generic constant which is always independent of the discretization parameter $h$.

\section{Regularity results for the state/adjoint equation.}\label{Sec:2}
Throughout the paper the domain $\Omega \subset \R^3$ is assumed to be polyhedral and convex. The set of faces is denoted by ${\cal F}( \Omega)$, the set of all edges by ${\cal E}(\Omega)$, and the set of all vertices by ${\cal V}(\Omega)$. For every edge $e\in {\cal E}(\Omega)$ we denote by $\omega_e$ the interior angle at $e$. We define
\[
\omega_\Omega = \max_{e \in {\cal E}(\Omega)} \omega_e
\]
and have $\omega_\Omega < \pi$ by convexity of $\Omega$. Moreover, we will use the critical exponent $\lambda_\Omega>1$ defined as
\begin{equation}\label{eq:lambda_Omega}
	\lambda_\Omega = \frac{\pi}{\omega_\Omega}.
\end{equation}

We use the standard notation for the Lebesgue $L^2(\Omega)$ and Sobolev spaces $H^{k}(\Omega)$ as well as the corresponding fractional spaces $H^{s}(\Omega)$ equipped with Sobolev-Slobodeckij norms, see, e.g., \cite[Chapter 1.3]{Grisvard:1985} and \cite[Chapter 5]{Adams:2003} for details. We will also use weighted Sobolev spaces with a weight being a power of the distance
to the boundary $\partial \Omega$. The function $x \mapsto \dist(x,\partial \Omega)$ is Lipschitz continuous with the Lipschitz constant $L=1$, but does not in general belong to $W^{2,\infty}(\Omega)$, since the boundary $\partial \Omega$ is polyhedral. For the definition of the weighted spaces this Lipschitz continuity is sufficient, but for weighted superconvergence estimates (see below) we require a smoothed weight. By \cite[Theorem 2, page 171]{Stein:1970} there exists a function $\rho \colon \bar \Omega\to \R_+$ with $\rho \in C^\infty(\Omega)$ fulfilling 
\[
c_1 \rho(x) \le \dist(x,\partial \Omega) \le c_2 \rho(x) \quad \text{for all } x \in \bar \Omega,
\]
with some constants $c_1$ and $c_2$ as well as
\[
\abs{D^\alpha \rho(x)} \le c \rho(x)^{1-\abs{\alpha}} \quad \text{for all } x \in \Omega,
\]
for every multi-index $\alpha \in \N_0^3$. 

For $m \in \N_0$, and a factor $s \in \R$ the space $H^m_s(\Omega)$ is the space of functions with the finite norm
\begin{equation}\label{PDE:eq:norm_weighted_w_space}
	\norm{v}_{H^m_s(\Omega)}^2 = \sum_{\abs{\alpha}\le m} \into \abs{D^\alpha v(x)}^2 \rho(x)^{2s}\, \mathrm{d}x,
\end{equation}
see \cite{MazyaMitreaShaposhnikova:2010} for details. For $s=0$ this space obviously coincides  with $H^m(\Omega)$.

We will use the following Hardy type inequalities.

\begin{proposition}\label{theorem:HardyFractional}
	Let $0<s<\half$. There is a constant $c>0$ such that the following inequality holds
	\[
	\ltwonorm{\rho^{-s} v} \le c \norm{v}_{H^s(\Omega)}
	\]
	for all $v \in H^s(\Omega)$.
\end{proposition}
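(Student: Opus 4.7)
The plan is to reduce the inequality to the classical fractional Hardy inequality on bounded Lipschitz domains. Using the two-sided equivalence $c_1 \rho(x) \le \dist(x,\partial\Omega) \le c_2 \rho(x)$, the quotient $\rho(x)^{-2s}/\dist(x,\partial\Omega)^{-2s}$ is uniformly bounded on $\Omega$, so it suffices to establish
\[
\into \frac{|v(x)|^2}{\dist(x,\partial\Omega)^{2s}}\, dx \le c \norm{v}_{H^s(\Omega)}^2
\]
for every $v \in H^s(\Omega)$ and $0<s<\half$.

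The central ingredient is the fractional Hardy inequality for bounded Lipschitz domains, due to Dyda (Illinois J.\ Math.\ 48 (2004)): for $0<s<\half$ and such a domain $D \subset \R^n$,
\[
\int_D \frac{|v(x)|^2}{\dist(x,\partial D)^{2s}}\, dx \le c \int_D \int_D \frac{|v(x)-v(y)|^2}{|x-y|^{n+2s}}\, dx\, dy.
\]
Since a convex polyhedral $\Omega \subset \R^3$ is bounded Lipschitz, this applies with $n=3$, and the right-hand side is dominated by $\norm{v}_{H^s(\Omega)}^2$. This yields the desired estimate in a single step once combined with the reduction from $\rho$ to the distance function.

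If a self-contained proof is preferred, the natural route is localization. A finite partition of unity subordinate to a covering of $\bar\Omega$ in which $\partial\Omega$ is locally the graph of a Lipschitz function reduces matters to the half-space $\R^3_+$, where $\dist(x,\partial\R^3_+)=x_3$. The one-dimensional fractional Hardy inequality in the normal direction, valid precisely because $s<\half$ and requiring no boundary condition on the trace (which is not even defined in $H^s$ for $s<\half$), integrated over the tangential directions, gives the bound. The main technical obstacle in this approach would be verifying that multiplication by each cut-off $\chi_j$ is bounded on $H^s(\Omega)$ uniformly in the geometry — standard but slightly tedious for the fractional Sobolev--Slobodeckij seminorm. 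Citing Dyda's result directly avoids this bookkeeping entirely, which is why the first route is the most efficient presentation.
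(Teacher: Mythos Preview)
Your proposal is correct and essentially matches the paper's proof: the paper simply cites \cite[Theorem 1.4.4.3]{Grisvard:1985} together with Dyda's fractional Hardy inequality \cite[(17)]{Dyda:2004} and \cite[Corollary 2.4]{ChenSong:2003}, which is exactly the result you invoke after the (correct) reduction from $\rho$ to $\dist(\cdot,\partial\Omega)$. Your additional sketch of a localization-to-half-space argument is a reasonable alternative but not needed, as you note yourself.
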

\begin{proof}
	We refer to \cite[Theorem 1.4.4.3]{Grisvard:1985}, see also \cite[(17)]{Dyda:2004} and \cite[Corollary 2.4]{ChenSong:2003}.
\end{proof}

\begin{proposition}\label{theorem:HardyType_s}
	Let $0<s <\frac{1}{2}$. There is a constant $c>0$ such that the following inequality holds
	\[
	\ltwonorm{\rho^{-s} v} \le \frac{c}{1-2s} \left(\ltwonorm{\rho^{1-s}v}+ \ltwonorm{\rho^{1-s}\nabla v}\right)
	\]
	for all $v \in H^1_{1-s}(\Omega)$.
\end{proposition}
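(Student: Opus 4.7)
The plan is to mimic the one-dimensional identity $x^{-2s}=(1-2s)^{-1}(x^{1-2s})'$: in the present setting this takes the form $\nabla(\rho^{1-2s})=(1-2s)\rho^{-2s}\nabla\rho$, which lets one trade a $\rho^{-2s}$ weight for a $\rho^{1-2s}$ weight at the price of a $1/(1-2s)$ factor. The restriction $s<\half$ is crucial, since it guarantees $\rho^{1-2s}\to0$ as one approaches $\partial\Omega$, annihilating the boundary contribution from the ensuing integration by parts.

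First I would localize. Fix $\rho_0>0$ small (chosen independently of $s$) and pick a smooth cutoff $\eta\in C^\infty(\bar\Omega)$ with $\eta\equiv 1$ on $\{\rho\le\rho_0/2\}$, $\eta\equiv 0$ on $\{\rho\ge\rho_0\}$, and $|\nabla\eta|\le C/\rho_0$. On the support of $1-\eta$ one has $\rho^{-s}\le\rho_0^{-1}\rho^{1-s}$, so
\[
\ltwonorm{\rho^{-s}(1-\eta)v}\le C\rho_0^{-1}\ltwonorm{\rho^{1-s}v},
\]
and it remains to estimate $\ltwonorm{\rho^{-s}\eta v}$. Next I would build a smooth vector field $\Phi\in C^\infty(\bar\Omega;\R^3)$ with $\Phi$ and $\nabla\cdot\Phi$ bounded and with $\Phi\cdot\nabla\rho\ge\kappa>0$ on $\{\rho\le\rho_0\}$. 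For convex $\Omega$ such a $\Phi$ can be assembled from the outward-pointing field $x\mapsto x-x_0$ (with $x_0\in\Omega$) and a partition of unity aligned with the faces of $\partial\Omega$; the key input is that on a thin boundary strip the regularized distance $\rho$ has $\nabla\rho$ pointing (up to a constant) in the inward normal direction of the nearest face.

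The core computation applies the divergence theorem to $(\eta v)^2\rho^{1-2s}\Phi$ on $\Omega$. After approximating $v$ by $C^\infty(\bar\Omega)$ functions (standard density in $H^1_{1-s}(\Omega)$) to justify the computation, the boundary contribution vanishes because $1-2s>0$. Expanding the divergence yields
\begin{equation*}
(1-2s)\into\rho^{-2s}(\eta v)^2\,\Phi\cdot\nabla\rho\,dx=-\into\rho^{1-2s}(\eta v)^2\nabla\cdot\Phi\,dx-2\into\rho^{1-2s}(\eta v)\,\Phi\cdot\nabla(\eta v)\,dx.
\end{equation*}
Bounding the left-hand side below by $(1-2s)\kappa\,\ltwonorm{\rho^{-s}\eta v}^2$ and factoring $\rho^{1-2s}=\rho^{-s}\cdot\rho^{1-s}$ on the right, followed by Cauchy--Schwarz, one gets
\begin{equation*}
(1-2s)\kappa\,\ltwonorm{\rho^{-s}\eta v}^2\le C\,\ltwonorm{\rho^{-s}\eta v}\bigl(\ltwonorm{\rho^{1-s}\eta v}+\ltwonorm{\rho^{1-s}\nabla(\eta v)}\bigr).
\end{equation*}
Dividing by $\ltwonorm{\rho^{-s}\eta v}$ and expanding $\nabla(\eta v)=\eta\nabla v+v\nabla\eta$ (with $|\nabla\eta|$ bounded and supported where $\rho\sim\rho_0$) delivers
\[
\ltwonorm{\rho^{-s}\eta v}\le\frac{c}{1-2s}\bigl(\ltwonorm{\rho^{1-s}v}+\ltwonorm{\rho^{1-s}\nabla v}\bigr),
\]
and combining with the bound on $(1-\eta)v$ completes the proof.

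The principal obstacle is the construction of $\Phi$ with $\Phi\cdot\nabla\rho\ge\kappa>0$ on a fixed boundary strip, which hinges on a lower bound for $|\nabla\rho|$ near $\partial\Omega$. Such a bound is implicit in Stein's regularization but is not among the pointwise estimates recorded in the excerpt. If one prefers to avoid it, the alternative is a finite partition of unity subordinate to a cover of $\partial\Omega$ by neighborhoods of faces, edges, and vertices: each local patch can be straightened so that $\rho$ is comparable to a coordinate direction and the integration by parts becomes essentially one-dimensional, with the Stein bounds $|D^\alpha\rho|\le c\rho^{1-|\alpha|}$ keeping all remainder terms $\rho^{1-2s}\Delta\rho\,v^2$ and analogous contributions harmless.
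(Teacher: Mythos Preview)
Your argument is sound: the divergence-theorem identity applied to $(\eta v)^2\rho^{1-2s}\Phi$ is precisely the classical mechanism behind weighted Hardy inequalities of this type, and the way you extract the $\frac{1}{1-2s}$ factor is exactly right. The localization and the handling of the cutoff are routine, and you correctly identify the only delicate point, namely the existence of a smooth vector field transversal to the level sets of $\rho$ near $\partial\Omega$. (A small remark: the field $x\mapsto x-x_0$ points outward, so its inner product with $\nabla\rho$ is negative near the boundary; this is immaterial since only $|\Phi\cdot\nabla\rho|\ge\kappa$ is needed after taking absolute values.) For Stein's regularized distance a uniform lower bound $|\nabla\rho|\ge c>0$ near $\partial\Omega$ is in fact part of the construction, so the global field $\Phi=\nabla\rho$ (cut off away from the boundary) already works without a partition of unity; your face/edge/vertex decomposition is a valid fallback.

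The paper, by contrast, does not prove the inequality at all: it simply quotes \cite[Theorem~2.4, p.~293]{Necas:2012} and remarks that the $s$-dependence of the constant can be read off from that proof. So you have supplied a self-contained argument where the paper relies on an external reference; your approach is essentially the standard one (and very likely close to what Ne\v{c}as does), with the added benefit that the blow-up rate $\frac{c}{1-2s}$ is visible from the outset rather than requiring an inspection of someone else's proof.
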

\begin{proof}
	The estimate is given in \cite[Theorem 2.4, page 293]{Necas:2012}. The dependence of the constant on $s$ can be tracked from the proof.
\end{proof}

For a face $F \in \mathcal{F}(\partial \Omega)$ we will also use standard notion of Sobolev spaces $H^{s}(F)$ for $s\in \R$. For $0\le s\le 1$ the definition of these spaces can be directly extended to the whole boundary leading to the spaces  $H^{s}(\partial \Omega)$. The corresponding dual space is denoted by $H^{-s}(\partial \Omega)$.
We will use standard trace theorems, which provide continuity for the trace operator $\tau \colon H^{1}(\Omega) \to H^{\nicefrac{1}{2}}(\partial \Omega)$, see, e.g., \cite[Theorem 1.5.1.3]{Grisvard:1985}. 

In the sequel we require some regularity results to be applied for both state and the adjoint equations. We consider the weak solution $u \in H^1(\Omega)$ of
\begin{equation}\label{eq:z_rhs_f}
	\begin{aligned}
		-\Lap u + u &= f &\quad&\text{in } \Omega,\\
		\partial_n u &=g &\quad&\text{on } \partial \Omega
	\end{aligned}
\end{equation}
for given $f \in L^2(\Omega)$ and $g \in L^2(\partial \Omega)$, i.e.,
\begin{equation}\label{eq:z_rhs_f_weak}
u \in H^1(\Omega) \quad:\quad (\nabla u,\nabla \phi) + (u,\phi) = (f,\phi) + (g,\phi)_{\partial \Omega} \quad \text{for all } \phi \in H^1(\Omega).
\end{equation}
Here, $(\cdot,\cdot)$ denotes the inner product in $L^2(\Omega)$ and $(\cdot,\cdot)_{\partial \Omega}$ denotes the inner product in $L^2(\partial \Omega)$. By standard arguments (see, e.g., the Lax-Milgram Theorem) we obtain the unique solvability of \cref{eq:z_rhs_f_weak} in $H^1(\Omega)$ and the priori estimate
\[
	\norm{u}_{H^1(\Omega)} \le c \left(\ltwonorm{f}+\ltwonormdo{g}\right)
\]
with a constant $c>0$ independent of the data.
\begin{theorem}\label{theorem:regularity_state_adjoint}
	\begin{enumerate}
		\item\label{it:1} Let  $f \in L^2(\Omega)$ and $g \in L^2(\partial \Omega)$. There holds $u\in H^{\nicefrac{3}{2}}(\Omega)$, $\rho^{\nhalf}\nabla^2 u\in L^2(\Omega)$ and
		\[
			\norm{u}_{H^{\nicefrac{3}{2}}(\Omega)}+\ltwonorm{\rho^{\nhalf}\nabla^2 u} \le c \left(\ltwonorm{f}+\ltwonormdo{g}\right)
		\]
		with a constant $c>0$ independent of the data.
		\item\label{it:2} Let  $f \in L^2(\Omega)$ and $g\in H^{\nicefrac{1}{2}}(F)$ for all faces $F\in{\cal F}(\Omega)$. There holds $u \in H^2(\Omega)$ and
		\[
		\htwonorm{u}\le c\left(\ltwonorm{f} + \sum_{F\in{\cal F}(\Omega)}\norm{g}_{H^{\nhalf}(F)}\right)
		\]
		with a constant $c>0$ independent of the data.
	\end{enumerate}
\end{theorem}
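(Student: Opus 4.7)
My plan is to establish part~\ref{it:2} first using the classical $H^2$-regularity theory for convex polyhedra, and then to deduce part~\ref{it:1} by splitting $u$ into the contributions from $f$ and $g$ and proving for the boundary-driven part both an $H^{\nicefrac{3}{2}}$-bound and a separate weighted $H^2$-estimate.

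For part~\ref{it:2}, I would use a standard localization-and-covering argument together with the singularity theory of the Laplacian on polyhedral domains (Grisvard, Dauge, Maz'ya-Rossmann). Interior regularity handles points away from $\partial\Omega$. Near a smooth point of a face, local flattening reduces the problem to the Neumann problem in a half-space with $H^{\nhalf}$-data, which is classical. Near an edge $e$, localization plus a partial Fourier transform in the edge direction reduces the analysis to a two dimensional wedge of opening $\omega_e$; since $\omega_e<\pi$ the critical exponent $\pi/\omega_e>1$ rules out any singular function of regularity below $H^2$. Near a vertex, the associated spherical-cap eigenvalue problem for the Laplace-Beltrami operator produces, thanks to convexity, exponents above $1$ and hence again $H^2$-regularity. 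Combining these local estimates with the $H^1$-bound from Lax-Milgram yields part~\ref{it:2}.

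For part~\ref{it:1}, I would decompose $u=u_f+u_g$, where $u_f$ solves \eqref{eq:z_rhs_f} with $g\equiv 0$ and $u_g$ solves it with $f\equiv 0$. Applying part~\ref{it:2} to $u_f$ (whose trivial boundary datum lies in $H^{\nhalf}(F)$ for every face $F$) gives $u_f\in H^2(\Omega)$ with norm controlled by $\lVert f\rVert_{L^2(\Omega)}$, which in particular yields $u_f\in H^{\nicefrac{3}{2}}(\Omega)$ and $\rho^{\nhalf}\nabla^2 u_f\in L^2(\Omega)$. The global $H^{\nicefrac{3}{2}}$-regularity of $u_g$ is a classical result for elliptic Neumann problems with $L^2$-data on convex Lipschitz domains (Jerison-Kenig, Ne\v{c}as). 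For the weighted $H^2$-estimate of $u_g$ I would perform a dyadic decomposition of $\Omega$ with respect to the distance to $\partial\Omega$: on each shell $A_k=\{x\in\Omega : 2^{-k-1}\le\rho(x)\le 2^{-k}\}$ a rescaling to a unit-size subdomain reduces the question to an interior-regularity bound which, combined with a trace estimate, produces an $L^2$-bound for $\nabla^2 u_g$ on $A_k$ that grows like $2^{k/2}\lVert g\rVert_{L^2(\partial\Omega)}$; the weight $\rho^{\nhalf}\sim 2^{-k/2}$ exactly absorbs this factor, and summation over $k$ gives the desired estimate.

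The step I expect to be hardest is the weighted $H^2$-estimate for $u_g$ near the edges and vertices of $\Omega$, where the naive distance-to-boundary scaling interacts with the conical geometry and one has to track the singular expansion of the Neumann solution carefully. The convexity hypothesis $\omega_\Omega<\pi$, that is $\lambda_\Omega>1$, is precisely what ensures that every singular term appearing in this expansion remains integrable against the weight $\rho^{\nhalf}$, so that the dyadic summation converges and the two local estimates fit together into the global bounds claimed in the theorem.
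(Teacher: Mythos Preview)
Your overall strategy---prove part~\ref{it:2} via the polyhedral regularity theory and then split $u=u_f+u_g$ for part~\ref{it:1}---matches the paper's, and your treatment of part~\ref{it:2} is essentially a hands-on description of what the paper obtains by quoting \cite[Theorem~8.1.10]{MazyaRossmann:2010} and verifying its hypotheses. The decomposition in part~\ref{it:1} is also close: the paper uses a harmonic $u_g$ (with a compatibility-adjusted Neumann datum) plus an $H^2$ remainder handled by part~\ref{it:2}, and invokes Jerison--Kenig for the $H^{\nicefrac{3}{2}}$ bound, just as you propose.

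The genuine gap is in your weighted $H^2$ estimate for $u_g$. Your dyadic argument asserts $\lVert\nabla^2 u_g\rVert_{L^2(A_k)}\le c\,2^{k/2}\lVert g\rVert_{L^2(\partial\Omega)}$, but then
\[
\ltwonorm{\rho^{\nhalf}\nabla^2 u_g}^2 \approx \sum_k 2^{-k}\lVert\nabla^2 u_g\rVert_{L^2(A_k)}^2 \le c\sum_k \lVert g\rVert_{L^2(\partial\Omega)}^2
\]
diverges, since there are infinitely many shells as $k\to\infty$. What interior regularity on $A_k$ actually gives is $\lVert\nabla^2 u_g\rVert_{L^2(A_k)}\le c\,2^{k}\lVert\nabla u_g\rVert_{L^2(\tilde A_k)}+\ldots$, so after weighting and summing you would need $\rho^{-\nhalf}\nabla u_g\in L^2(\Omega)$; this is the fractional Hardy inequality at the \emph{endpoint} $s=\nhalf$, which fails (cf.\ \cref{theorem:HardyFractional}, stated only for $s<\nhalf$). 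The paper sidesteps this by citing \cite[Theorem~4.1]{JerisonKenig:1995}, which delivers $\lVert\rho^{\nhalf}\nabla^2 u_g\rVert_{L^2(\Omega)}\le c\,\lVert u_g\rVert_{H^{\nicefrac{3}{2}}(\Omega)}$ on arbitrary Lipschitz domains as a black box; this is a nontrivial square-function/interpolation result, not a consequence of naive dyadic scaling.

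This also means you have mislocated the difficulty. The weighted bound in part~\ref{it:1} does \emph{not} rely on convexity or on any edge/vertex expansion---it is a pure Lipschitz-domain fact. The singular-function bookkeeping you describe is what drives part~\ref{it:2} (and, later in the paper, \cref{theorem:weighted_H2reg}), not the $\rho^{\nhalf}\nabla^2 u$ estimate here.
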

\begin{proof}
	\ref{it:1}.
	From \cite[Theorem 2 and Remark~(b) after Theorem 3]{JerisonKenig:1981} we obtain that there is a unique weak solution $u_g$ with $(u_g,1)=0$ of
	\begin{alignat*}{2}
		-\Delta u_g &=0 &&\quad \text{in }\Omega,\\
		\partial_n u_g & = g + \abs{\partial \Omega}^{-1}(f-u,1) && \quad\text{on }\partial\Omega,
	\end{alignat*}
	which belongs to $H^{\nicefrac{3}{2}}(\Omega)$ for $g\in L^2(\partial\Omega)$ and $f\in L^2(\Omega)$ as the compatibility condition
	\begin{equation}\label{eq:solvability}
		(g + \abs{\partial \Omega}^{-1}(f-u,1),1)_{\partial\Omega}=(g,1)_{\partial\Omega}+(f-u,1)=0
	\end{equation}
	is always fulfilled due to \cref{eq:z_rhs_f_weak}. Moreover, according to \cite[Theorem 4.1]{JerisonKenig:1995} and \cite[Theorem 2 and Remark~(b) after Theorem 3]{JerisonKenig:1981} there holds
	\begin{align*}
		\ltwonorm{\rho^{\nhalf}\nabla^2 u_g}&\le c \norm{u_g}_{H^{\nicefrac{3}{2}}(\Omega)}\le c \norm{g+ \abs{\partial \Omega}^{-1}(f-u,1)}_{L^2(\partial\Omega)}\\
		&\le c\left(\norm{f}_{L^2(\Omega)}+\norm{g}_{L^2(\partial\Omega)}\right),
	\end{align*}
	where in the last step we also used the a priori estimate in $H^1(\Omega)$ for $u$.
	Next, let $w\in H^1(\Omega)$ be the unique weak solution of
	\begin{alignat*}{2}
		-\Delta w + w &=f-u_g &&\quad \text{in }\Omega,\\
		\partial_n w & = \abs{\partial \Omega}^{-1}(g,1)_{\partial\Omega} && \quad\text{on }\partial\Omega.
	\end{alignat*}
	By the Sobolev inequality, elliptic $H^2(\Omega)$ regularity from assertion \ref{it:2} of this theorem, and the a priori estimate for $u_g$ we obtain
	\[
		\norm{w}_{H^{\nicefrac{3}{2}}(\Omega)}\le c\htwonorm{w}\le c\left(\ltwonorm{f-u_g}+\abs{(g,1)_{\partial\Omega}}\right)\le c\left(\ltwonorm{f}+ \norm{g}_{L^2(\partial\Omega)}\right)
	\]
	and obviously due to the boundedness of $\rho$ in $\Omega$
	\[
		\ltwonorm{\rho^{\nhalf}\nabla^2 w}\le c \htwonorm{w}\le c \left(\ltwonorm{f}+ \norm{g}_{L^2(\partial\Omega)}\right).
	\]
	By construction we get that $w+u_g$ solves \cref{eq:z_rhs_f_weak}. Consequently, $u=w+u_g$ and the corresponding a priori estimate is fulfilled.
	
	\ref{it:2}. We already know that $u$ belongs to $H^1(\Omega)$. Next, we show that $\nabla u$ belongs to $H^1(\Omega)$ as well. As a consequence, $u$ belongs to $H^2(\Omega)$. In order to show this higher regularity, we apply \cite[Theorem 8.1.10, part 2]{MazyaRossmann:2010}. To this end, we first introduce the function space setting, which is required to make the upcoming steps comprehensible. In contrast to the definition of the weighted Sobolev spaces in \cite[Section 8.1.2]{MazyaRossmann:2010} we do not use different parameters for each corner and each edge but only use one parameter for all corners and one for all edges, which is sufficient for our purposes. For any non-negative integer $l$ and $\beta,\delta\in \R$ we introduce the space $W_{\beta,\delta}^{l,2}(\Omega)$ as the set of functions with finite norm
	\[
	\norm{v}_{W_{\beta,\delta}^{l,2}(\Omega)}=\sum_{\abs{\alpha}\le l}\ltwonorm{r_v^{\beta-\delta-l+\abs{\alpha}}r_e^{\delta}\partial_x^{\alpha}v},
	\]
	where $r_v$ and $r_e$ denote the distance to the set of corners ${\cal V}(\Omega)$ and the set of edges ${\cal E}(\Omega)$, respectively. According to \cite[(3.1.2) on page 90]{MazyaRossmann:2010} this norm is equivalent to the norm of the space $W^{l,2}_{\beta,\delta}(\Omega)$ introduced in \cite[Section 8.1.2]{MazyaRossmann:2010}.
	From \cite[Theorem 8.1.10, part 2]{MazyaRossmann:2010} with $p=2$, $\beta_j=0$, $\delta_k=0$ for all $j,k$, we obtain $\nabla u \in W_{0,0}^{1,2}(\Omega)$ and as a consequence $\nabla u \in H^1(\Omega)$, if the following conditions are satisfied:
	\begin{enumerate}
		\item[(I)] The data $f$ and $g$ fulfill $f\in W_{0,0}^{0,2}(\Omega)$, $g\in W^{\nicefrac{1}{2},2}_{0,0}(F)$ for all faces $F\in{\cal F}(\Omega)$.
		\item[(II)] The closed strip in the complex plane between the lines $\Re\lambda=-1$ and $\Re\lambda = 0$ contains only the eigenvalues $\lambda =-1$ and $\lambda =0$ of certain operator pencil ${\cal U}_v(\lambda)$ associated with the corner $v\in{\cal V}(\Omega)$ (see \cite[Section 8.1.4]{MazyaRossmann:2010} for details) and that the eigenvalue $\lambda=0$ has geometric and algebraic multiplicity $1$.
		\item[(III)] The smallest positive eigenvalues $\Lambda_v$ of the operator pencils ${\cal U}_v(\lambda)$ satisfy $\frac12 < \min(1,\Lambda_v)$.
		\item[(IV)] The inequalities $\max(0,2-\delta_+^{e})<1$ with $\delta_+^{e}=\pi/\omega_e$ hold for all edges $e\in{\cal E}(\Omega)$ (see also \cite[Section 8.3.4]{MazyaRossmann:2010} for the specification of $\delta_+^{e}$).
	\end{enumerate}
	We now check that all of these requirements are fulfilled if $\Omega$ is convex and if $f\in L^2(\Omega)$ and $g\in H^{\nicefrac{1}{2}}(F)$ for all faces $F\in{\cal F}(\Omega)$:
	\begin{enumerate}
		\item[(I)] First, we note that $L^2(\Omega)=W^{0,2}_{0,0}(\Omega)$ (both norms are equal). Moreover, we have $H^1(\Omega)=W^{1,2}_{0,0}(\Omega)$. This holds due to the trivial embedding $W^{1,2}_{0,0}(\Omega)\hookrightarrow H^1(\Omega)$ and the embedding $W^{1,2}_{0,0}(\Omega)\hookleftarrow H^1(\Omega)$ which is a consequence of the Hardy inequality (see e.g. \cite[(2.2.9) on page 35]{MazyaRossmann:2010}). Thus, $H^{\nicefrac{1}{2}}(F)=W^{\nicefrac{1}{2},2}_{0,0}(F)$ for all faces $F\in{\cal F}(\Omega)$ as both spaces are the corresponding natural trace spaces on the faces.
		\item[(II)] This condition is satisfied according to \cite[Section 2.3.1]{KozlovMazyaRossmann:2001}, i.e., we get that the strip $-1<\Re\lambda<0$ is free of eigenvalues of ${\cal U}_v(\lambda)$ and $\lambda=-1$ and $\lambda=0$ are simple eigenvalues with constant eigenfunctions.
		\item[(III)] According to \cite[Section 8.3.5]{MazyaRossmann:2010} the smallest positive eigenvalue $\Lambda_v$ of the operator pencils ${\cal U}_v(\lambda)$ is greater than $1$ if $\Omega$ is convex. Consequently, the inequality $\frac12 < \min(1,\Lambda_v)$ is fulfilled.
		\item[(IV)] As $\Omega$ is convex, we have $\delta_+^{e}=\pi/\omega_e>1$ and therefore $\max(0,2-\delta_+^{e})<1$.
	\end{enumerate}
	The a priori estimate of the second assertion is a consequence of (I) and \cite[Theoren 6.5.4 and Corollary 7.4.2]{MazyaRossmann:2010} together with \cite[Lemma 8.1.2]{MazyaRossmann:2010} and an appropriate partition of unity of the domain.
\end{proof}

\begin{theorem}\label{theorem:weighted_H2reg}
	Let $f \in H^1(\Omega)$ and $g\equiv0$. Moreover, let $s_\Omega$ be defined by
	\begin{equation}\label{eq:s_Omega}
		s_\Omega = \min\left(\lambda_\Omega-1,\half\right)\in\bigg(0,\frac12\bigg],
	\end{equation}
	where $\lambda_\Omega>1$ is the critical exponent \cref{eq:lambda_Omega}.
	Then, for every $0<s<s_\Omega$, the solution $u$ to \cref{eq:z_rhs_f} possesses the regularity $u \in H^2_{-s}(\Omega)$ and the estimate
	\[
	\norm{u}_{H^2_{-s}(\Omega)} \le \frac{c}{1-2s} \norm{f}_{H^1(\Omega)}
	\]
	holds with a constant $c>0$ depending only on $\Omega$.
\end{theorem}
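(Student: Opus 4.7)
My plan is to split the norm $\norm{u}_{H^2_{-s}(\Om)}^2=\sum_{\abs{\alpha}\le 2}\ltwonorm{\rho^{-s}D^\alpha u}^2$ into the contributions with $\abs{\alpha}\le 1$ and the Hessian contribution with $\abs{\alpha}=2$, and handle each separately. For the low-order terms, Theorem~\ref{theorem:regularity_state_adjoint}(\ref{it:2}) applied with $g\equiv 0$ gives $u\in H^2(\Om)$ together with $\htwonorm{u}\le c\ltwonorm{f}\le c\norm{f}_{H^1(\Om)}$. Since $u$ and $\nabla u$ then lie in $H^1(\Om)\hookrightarrow H^s(\Om)$ for $s<\half$, the fractional Hardy inequality of Proposition~\ref{theorem:HardyFractional} yields
\[
\ltwonorm{\rho^{-s}u}+\ltwonorm{\rho^{-s}\nabla u}\le c\htwonorm{u}\le c\norm{f}_{H^1(\Om)},
\]
with a constant that stays bounded as long as $s$ is bounded away from $\half$.

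The heart of the argument is the weighted Hessian estimate $\ltwonorm{\rho^{-s}\nabla^2 u}\le \frac{c}{1-2s}\norm{f}_{H^1(\Om)}$. To obtain this, I would rerun the proof of Theorem~\ref{theorem:regularity_state_adjoint}(\ref{it:2}) in a weighted framework by reapplying \cite[Theorem~8.1.10, part~2]{MazyaRossmann:2010} to the Neumann problem on the convex polyhedron $\Om$ in the weighted Mazya--Rossmann class $W^{l,2}_{\beta,\delta}(\Om)$ with a non-trivial edge parameter $\delta=-s$, so that the resulting weight $r_e^{-s}$ on second derivatives reproduces $\rho^{-s}$ in a neighborhood of each edge. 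A partition of unity subordinate to neighborhoods of each vertex, each edge (away from vertices), and each open face then reduces the global weighted Hessian estimate to local ones on pieces where $\rho$ is comparable, up to bounded factors, to the relevant Mazya--Rossmann distance function; on neighborhoods far from $\partial\Om$ the weight is harmless and the bound reduces to the unweighted $H^2$ bound already available.

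Applying \cite[Theorem~8.1.10, part~2]{MazyaRossmann:2010} with the shifted weight requires rechecking the four structural conditions (I)--(IV) used in the proof of Theorem~\ref{theorem:regularity_state_adjoint}(\ref{it:2}). Conditions (II) and (III) persist because the relevant vertex spectral strip has width at most $\half+s<1$, while $\Lambda_v>1$ by convexity of $\Om$; condition (I) on the data follows from $f\in H^1(\Om)$ via Proposition~\ref{theorem:HardyFractional}; and the edge condition (IV), suitably shifted for the weight $r_e^{-s}$, becomes $s<\lambda_e-1$ for every edge $e$, which taken over the worst edge gives the binding upper bound $s<\lambda_\Om-1$. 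Together with the fractional Hardy restriction $s<\half$ from the vertex/face part, this produces exactly the threshold $s_\Om=\min(\lambda_\Om-1,\half)$ of the statement. The prefactor $(1-2s)^{-1}$ is tracked from the Hardy-type inequality of Proposition~\ref{theorem:HardyType_s} (equivalently, the blow-up of the $\rho^{-2s}$-integrability constant near a face as $s\to\half$), which is invoked during the weighted-norm transfer in the partition-of-unity step. I expect the main technical obstacle to be precisely the reconciliation of the anisotropic Mazya--Rossmann weights (split along $r_v$ and $r_e$) with the isotropic distance weight $\rho^{-s}$ of the definition of $H^2_{-s}(\Om)$, together with the careful bookkeeping of the $s$-dependence of constants as $s\to\half$.
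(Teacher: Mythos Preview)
Your plan correctly identifies the main difficulty---reconciling the boundary-distance weight $\rho^{-s}$ with the Mazya--Rossmann edge/vertex weights---but the proposed resolution via weight comparability on partition-of-unity pieces does not work. The basic obstruction is that $\rho\le r_e$ everywhere, so $\rho^{-s}\ge r_e^{-s}$: a bound on $\ltwonorm{r_e^{-s}\nabla^2 u}$ coming from $W^{2,2}_{\beta,-s}$ regularity does \emph{not} control $\ltwonorm{\rho^{-s}\nabla^2 u}$. Near the interior of a face, $\rho$ is small while $r_e$ and $r_v$ are of order one; even near an edge, a point at distance $r$ from the edge and angle $\phi$ to an adjacent face has $\rho\approx r\sin\phi\ll r=r_e$. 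So on no piece of your partition that touches $\partial\Om$ are the weights comparable in the direction you need.

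The paper circumvents this by a different reduction. Instead of attacking $\rho^{-s}\nabla^2 u$ directly, it applies Proposition~\ref{theorem:HardyType_s} (as the structural first step, not merely for constant tracking) to each of $u$, $\nabla u$, $\nabla^2 u$, obtaining
\[
\norm{u}_{H^2_{-s}(\Om)}\le \frac{c}{1-2s}\left(\htwonorm{u}+\ltwonorm{\rho^{1-s}\nabla^3 u}\right).
\]
The point is that the exponent $1-s$ is now \emph{positive}, so $\rho^{1-s}\le r_e^{1-s}$ goes the right way and it suffices to bound $\ltwonorm{r_e^{1-s}\nabla^3 u}$. This is obtained from Mazya--Rossmann $W^{3,2}_{1-s,1-s}$ regularity (their Theorem~8.1.7, a shift result, rather than Theorem~8.1.10), which in turn requires the solution to vanish at the corners; the paper therefore subtracts smooth corner cutoffs $\sum_{v} u(v)\chi_v$ with $\partial_n\chi_v=0$ before applying the Mazya--Rossmann result, and controls the correction via the already available $H^2$ bound and Sobolev embedding. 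Your edge condition $s<\lambda_\Om-1$ reappears there as the $W^{3,2}$ requirement $\max(0,3-\pi/\omega_e)<2-s$, and the vertex condition is again harmless by convexity, so the threshold $s_\Om$ comes out the same---but only after this passage to third derivatives with a positive weight exponent, which is the idea your proposal is missing.
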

\begin{remark}
	In order to prove this result, we proceed in principal similar to the proof of \cite[Theorem 7]{PfeffererVexler:2024}, which deals with a comparable result, however, for a problem with homogeneous Dirichlet boundary conditions. This, at first glance, minor difference makes the analysis in our case more involved. To be more precise, solutions to problems in polyhedral domains with homogeneous Dirichlet boundary conditions belong to weighted Sobolev spaces with (purely) homogeneous norm with weight being the distance to the singular points on the boundary, i.e., the corners and the edges. In contrast, solutions of problems with homogeneous Neumann boundary conditions require different weighted Sobolev spaces (see also the proof of \cref{theorem:regularity_state_adjoint}).
\end{remark}
\begin{proof}
Let us first elucidate, how the regularity result of the assertion, where we have a weighted regularity with weight being the distance to the boundary, can be traced back to regularity results in weighted Sobolev spaces mentioned above.
Due to the definition of the weighted spaces $H^2_{-s}(\Omega)$, we have
\[
	\norm{u}_{H^2_{-s}(\Omega)} \le \ltwonorm{\rho^{-s} u} + \ltwonorm{\rho^{-s} \nabla u} +\ltwonorm{\rho^{-s} \nabla^2 u}.
\]
We notice that $\rho^{1-s}$ is uniformly bounded in $L^\infty(\Omega)$ (definitely for $s<1/2$). As a consequence, we deduce from \cref{theorem:HardyType_s} (applied to each of the terms on the right hand side) that
\begin{align*}
	\norm{u}_{H^2_{-s}(\Omega)} &\le \frac{c}{1-2s}\left(\htwonorm{u}+\ltwonorm{\rho^{1-s} \nabla^3 u}\right)\\
	&\le \frac{c}{1-2s}\left(\ltwonorm{f}+\ltwonorm{\rho^{1-s} \nabla^3 u}\right),
\end{align*}
where the last step follows from \cref{theorem:regularity_state_adjoint}.
Next, recall that $r_e$ denotes the distance function to the set of edges ${\cal E}(\Omega)$. As $\rho(x)\le r_e(x)$ for all $x\in\Omega$ and $1-s>0$, we get for the term involving third derivatives that
\[
	\ltwonorm{\rho^{1-s} \nabla^3 u}\le \ltwonorm{r_e^{1-s} \nabla^3 u}.
\]
To bound this term, we will apply \cite[Theorem 8.1.7]{MazyaRossmann:2010}.
However, this theorem is not directly applicable, as it requires, in our case, the solution to be zero at corner points, which does not hold in general for our problem. Instead, we will apply the results to a modified solution where we can guarantee that this modified solution is zero at corner points. 
Let $\eta\colon\R_+ \to[0,1]$ be a smooth cut off function which is equal to one in a neighborhood of zero and which has a support in $[0,1)$. Further, we define smooth cut off functions $\chi_v$ for each corner $v\in{\cal V}(\Omega)$ by
\[
	\chi_v(x)=\eta(\abs{x-v}).
\]
Without loss of generality we assume that the support of each function $\chi_v$ is contained in $\Omega$ and $\chi_v$ vanishes in the neighborhood of any other corner. There holds
\[
	\nabla \chi_v(x)=\frac{\eta'(\abs{x-v})}{\abs{x-v}}(x-v)\quad \text{and} \quad \partial_n\chi_v \equiv 0 \text{ on }\partial\Omega.
\]
Next, we introduce the modified solution. We define the function $\tilde u$ by
\[
	\tilde u = u + u_\text{v}\quad\text{with}\quad u_\text{v}=- \sum_{v\in{\cal V}(\Omega)} u(v)\chi_v,
\]
which solves
\begin{alignat*}{2}
	-\Delta \tilde u+\tilde u  &=\tilde f&&\quad\text{in }\Omega, \\
	\partial_n\tilde u &=\tilde g&&\quad\text{on }\partial\Omega.
\end{alignat*}
with
\[
	\tilde f = f -\Delta u_\text{v}+u_\text{v}\quad \text{and}\quad \tilde g=\partial_n u+\partial_n u_\text{v}=\partial_n u- \sum_{v\in{\cal V}(\Omega)} u(v)\partial_n\chi_v=0.
\]
By the Sobolev inequality and \cref{theorem:regularity_state_adjoint}, we get
\begin{align*}
	\ltwonorm{r_e^{1-s} \nabla^3 u_\text{v}}&\le \sum_{v\in{\cal V}(\Omega)} \abs{u(v)}\, \ltwonorm{r_e^{1-s} \nabla^3 \chi_v}\\
	&\le c\norm{u}_{L^\infty(\Omega)}\le c\norm{u}_{H^2(\Omega)}\le c\ltwonorm{f}.
\end{align*}
As a consequence, there holds
\[
	\ltwonorm{r_e^{1-s} \nabla^3 u}\le \ltwonorm{r_e^{1-s} \nabla^3 \tilde u}+ \ltwonorm{r_e^{1-s} \nabla^3 u_\text{v}} \le \ltwonorm{r_e^{1-s} \nabla^3 \tilde u}+c\ltwonorm{f}.
\]
In summary, we still have to bound third derivatives in a weighted $L^2(\Omega)$-norm. However, in contrast to before, the function $\tilde u$ is equal to zero at each corner point and we can apply \cite[Theorem 8.1.7]{MazyaRossmann:2010} to get the desired term bounded. This is done next.
Let us note that the function space setting, which we need for the subsequent steps, has already been introduced in the proof of \cref{theorem:regularity_state_adjoint}.
From \cite[Theorem 8.1.7]{MazyaRossmann:2010} with $I_0=\emptyset$, $p,q=2$, $l=3$, $\beta_j'=1-s$, $\delta_k'=1-s$, $\beta_j=\beta\in(-1,-1/2)$, $\delta_k=\delta\in(-1,0)$ for all $j,k$,
we now obtain $\tilde u \in W_{1-s,1-s}^{3,2}(\Omega)$ and especially the boundedness of $r_e^{1-s} \nabla^3 \tilde u$ in $L^2(\Omega)$ if the following conditions are satisfied:
\begin{enumerate}
	\item[(I)] The solution $\tilde u$ belongs to $W^{1,2}_{\beta,\delta}(\Omega)$.
	\item[(II)] The right hand side $\tilde f$ fulfills $\tilde f\in W_{1-s,1-s}^{1,2}(\Omega)$.
	\item[(III)] The closed strip between the lines $\Re\lambda=-\beta-\frac12$ and $\Re\lambda =\frac12+s$ is free of eigenvalues of the operator pencil ${\cal U}_v (\lambda)$ (see the proof of \cref{theorem:regularity_state_adjoint} for details).
	\item[(IV)] The inequalities 
		\begin{equation}\label{eq:max_cond}
			\max(0,1-\delta_+^{e})<\delta+1<1\quad\text{and}\quad\max(0,3-\delta_+^{e})<2-s<3
		\end{equation}
	 with $\delta_+^{e}=\pi/\omega_e$ hold for all edges $e\in{\cal E}(\Omega)$.
\end{enumerate}
We now check that all of these requirements are fulfilled if $f\in H^1(\Omega)$ and if $\Omega$ is convex.
\begin{enumerate}
	\item[(I)] There holds
	\[
	 \ltwonorm{-\Delta u_\text{v} + u_\text{v}}\le \sum_{v\in{\cal V}(\Omega)} \abs{u(v)} \ltwonorm{-\Delta \chi_v + \chi_v} \le  c\sum_{v\in{\cal V}(\Omega)} \abs{u(v)}\le c\norm{u}_{L^\infty(\Omega)}.
	\]
	Consequently, the function $\tilde f=f -\Delta u_\text{v}+u_\text{v}$ fulfills due to the Sobolev inequality and elliptic regularity of $u$ (see \cref{theorem:regularity_state_adjoint})
	\[
		\norm{\tilde f}_{L^2(\Omega)}\le \norm{f}_{L^2(\Omega)}+c\norm{u}_{L^\infty(\Omega)}\le \norm{f}_{L^2(\Omega)}+c\norm{u}_{H^2(\Omega)}\le c \norm{f}_{L^2(\Omega)}.
	\]
	Hence, by elliptic regularity of $\tilde u$ (see \cref{theorem:regularity_state_adjoint}), we obtain
	\[
		\norm{\tilde u}_{H^2(\Omega)}\le c\|f\|_{L^2(\Omega)}.
	\]
	By \cite[Lemma 8.1.2]{MazyaRossmann:2010} with arbitrary $\beta,\delta >-1$ and by Hardy's inequality (see e.g. \cite[(2.2.9) on page 35]{MazyaRossmann:2010}) together with the fact that $\tilde u$ vanishes at corner points, we deduce
	\[
		\norm{\tilde u}_{W^{1,2}_{\beta,\delta}(\Omega)}\le c\norm{\tilde u}_{W^{2,2}_{0,0}(\Omega)}\le c\norm{\tilde u}_{H^2(\Omega)}\le c\|f\|_{L^2(\Omega)}
	\]
	such that $\tilde u$ belongs to $W^{1,2}_{\beta,\delta}(\Omega)$.
	\item[(II)] Due to Hardy's inequality (see e.g. \cite[(2.2.9) on page 35]{MazyaRossmann:2010}) we have
	\begin{align*}
		\norm{\tilde f}_{W_{1-s,1-s}^{1,2}(\Omega)}
		&\le c \left(\norm{\tilde f}_{H^1(\Omega)}+\ltwonorm{r_e^{1-s}\nabla \tilde f}\right)\le c \norm{\tilde f}_{H^1(\Omega)},
	\end{align*}
	where we also used the boundedness of $r_{e}^{1-s}$.
	Similar to before, the Sobolev inequality and \cref{theorem:regularity_state_adjoint} then imply
	\[
		\norm{\tilde f}_{W_{1-s,1-s}^{1,2}(\Omega)} \le c\left(\norm{f}_{H^1(\Omega)}+\norm{u}_{L^\infty(\Omega)}\right)\le c\left(\norm{f}_{H^1(\Omega)}+\norm{u}_{H^2(\Omega)}\right)
		\le c\norm{f}_{H^1(\Omega)}.
	\]
	\item[(III)] As we have seen in the proof of \cref{theorem:regularity_state_adjoint} the smallest positive eigenvalue $\Lambda_v$ of the operator pencil ${\cal U}_v(\lambda)$ is greater than $1$ if $\Omega$ is convex. Thus, the closed strip between the lines $\Re\lambda=-\beta-\frac12$ and $\Re\lambda =\frac12+s$ is free of eigenvalues as $\beta\in (-1,-1/2)$ and $s\in (0,1/2)$.
	\item[(IV)] Due to the convexity of the domain ($\pi/\omega_{e}>1$ for all $e\in\mathcal{E}(\Omega)$) the first condition in \cref{eq:max_cond} trivially holds for $\delta\in(-1,0)$.
	For $0<s<s_\Omega$ we get
	\[
		\max\left(0,3-\frac\pi\omega_e\right)< \max\left(\frac32,3-\frac\pi\omega_e\right)=2-\min\left(\frac12,\frac\pi\omega_e-1\right)\le2-s_\Omega<2-s<3,
	\]
	which is the second condition in \cref{eq:max_cond}.
\end{enumerate}
Finally, the a priori estimate of the assertion is a consequence of the previous inequalities and \cite[Theorem 6.5.4 and Corollary 7.4.2]{MazyaRossmann:2010} together with an appropriate partition of unity of the domain.
\end{proof}

\section{Optimal control problem.}\label{Sec:3}
In this section we discuss the optimal control problem \eqref{NeumannCon:eq:problem} and provide optimality conditions. If not explicitly mentioned we make the minimal assumption $u_d \in L^2(\Omega)$. 

We call $Q = L^2(\partial \Omega)$ the control space and introduce the solution operator $S \colon Q \to H^1(\Omega)$ mapping a control $q \in Q$ to the weak solution $u = u(q)$ of \cref{eq:z_rhs_f} with $g = q$ and $f=0$. Using this solution operator we define the reduced cost functional
\[
j \colon Q \to \R, \quad j(q) = J(q,Sq).
\]
Thus, we formulate the optimal control problem as:
\begin{equation}\label{NeumannCon:eq:reduced_problem}
	\text{Minimize } j(q), \quad q \in Q.
\end{equation}
It is straightforward to check, that the reduced cost functional $j$ is continuous and strictly convex. Thus, the existence and uniqueness of an optimal solution $\oq \in Q$ follows by standard arguments. We refer to $\ou = S \oq$ as the optimal state.

The functional $j$ is two times Fr\'echet differentiable. For a control $q \in Q$ and a direction $\dq \in Q$ the directional derivatives are
given by
\[
j'(q)(\dq) = (u-u_d,\du) + \alpha(q,\dq)_{\partial\Om}
\]
and
\[
j''(q)(\dq,\dq) = \ltwonorm{\du}^2 + \alpha \ltwonormdo{\dq}^2,
\]
where $u = Sq$ and $\du = S\dq$. 

The necessary optimality condition for \eqref{NeumannCon:eq:reduced_problem} is given as 
\begin{equation}\label{eq:opt_cond}
	\oq \in Q \quad:\quad	j'(\oq)(\dq) = 0 \quad \text{for all }\dq \in Q.
\end{equation}
By convexity of $j$ this condition is also sufficient for the optimality.

To derive the optimality system we require an adjoint based representation of $j'(q)(\dq)$. To this end we introduce the corresponding adjoint equation.  For a given control $q\in Q$ and corresponding state $u = u(q) \in H^1(\Omega)$
we introduce the adjoint state $z = z(q) \in H^1(\Omega)$ as the weak solution of the adjoint equation
\[
\begin{aligned}
	-\Lap z + z &= u-u_d &&\quad\text{in } \Omega,\\
	\partial_n z &=0 &&\quad\text{on } \partial \Omega,
\end{aligned}
\]
with the corresponding weak formulation
\begin{equation}\label{NeumannCon:eq:adjoint_weak}
	z \in H^1(\Omega) \quad:\quad (\nabla \phi,\nabla z) + (\phi,z) = (u-u_d,\phi) \quad \text{for all } \phi \in H^1(\Omega).
\end{equation}
By $H^2$ regularity from \cref{theorem:regularity_state_adjoint} we get $z \in H^2(\Omega)$. A straightforward calculation provides an expressions for the directional derivative of $j$ based on the solution of this adjoint equation:
\[
j'(q)(\dq) = (\alpha q + z|_{\partial \Omega},\dq)_{\partial \Omega},
\]
where  $q,\dq \in Q$, $u=u(q)\in H^1(\Omega)$ and $z=z(q)\in H^2(\Omega)$ solves the adjoint equation \eqref{NeumannCon:eq:adjoint_weak}. This consideration results in the following optimality system.

\begin{theorem}\label{NeumannCon:theorem:opt_sys}
	A control $\oq\in Q$ is the solution of the optimal control problem
	\eqref{NeumannCon:eq:reduced_problem} if and only if the triple $(\oq,\ou,\oz)$
	with $\oq \in Q$, $\ou \in H^1(\Omega)$  and $\oz \in H^2(\Omega)$ fulfills the following system:
	\begin{subequations}\label{NeumannCon:eq:opt_sys}
		\begin{itemize}
			\item State equation:
			\begin{equation}\label{NeumannCon:eq:opt_sys_state}
				(\nabla \ou,\nabla\phi) + (\ou,\phi)=(\oq,\phi)\quad\text{for all }\phi\in H^1(\Omega)
			\end{equation}
			\item Adjoint equation:
			\begin{equation}\label{NeumannCon:eq:opt_sys_adjoint}
				(\nabla \phi,\nabla \oz) + (\phi,\oz) =(\ou-u_d,\phi) \quad\text{for all }\phi\in H^1(\Omega)\\
			\end{equation}
			\item Optimality condition:
			\begin{equation}\label{NeumannCon:eq:opt_sys_gradient}
				\alpha q + z|_{\partial \Omega} = 0.
			\end{equation}
		\end{itemize}
	\end{subequations}
	Moreover, there holds $\oq \in H^{1}(\partial\Omega)$ and there exists a constant $c>0$ depending only on $\alpha$ and $\Omega$ such that
	\[
	\norm{\oq}_{H^{1}(\partial \Omega)} + \norm{\ou}_{H^2(\Omega)} +  \norm{\oz}_{H^2(\Omega)} \le c \ltwonorm{u_d}.
	\]
\end{theorem}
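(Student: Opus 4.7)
My plan is to first establish the equivalence by combining the optimality condition with the adjoint representation, and then bootstrap the regularity using \cref{theorem:regularity_state_adjoint} together with trace theorems.

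For the equivalence I would start from the necessary and sufficient optimality condition \cref{eq:opt_cond}, which is both necessary (standard Fréchet calculus) and sufficient (strict convexity of $j$). Using the adjoint-based representation derived just before the theorem, namely $j'(\oq)(\dq) = (\alpha \oq + \oz|_{\partial\Omega},\dq)_{\partial \Omega}$ with $\oz = z(\oq)$, the condition $j'(\oq)(\dq) = 0$ for all $\dq \in L^2(\partial\Omega)$ is equivalent to $\alpha \oq + \oz|_{\partial\Omega} = 0$ as an identity in $L^2(\partial \Omega)$. This yields \cref{NeumannCon:eq:opt_sys_gradient}, while \cref{NeumannCon:eq:opt_sys_state,NeumannCon:eq:opt_sys_adjoint} encode the definition of the state and adjoint state associated to $\oq$. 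Conversely, if $(\oq,\ou,\oz)$ satisfies the system, then the adjoint representation yields $j'(\oq)\equiv 0$, and strict convexity gives optimality of $\oq$.

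For the regularity estimate I would proceed by a bootstrap argument. First, testing the optimality condition with $\dq = \oq$ gives $\ltwonorm{\ou}^2 + \alpha \ltwonormdo{\oq}^2 = (u_d,\ou)$, and Young's inequality yields the a priori bound $\ltwonormdo{\oq} \le c\ltwonorm{u_d}$, hence also $\norm{\ou}_{H^1(\Omega)} \le c \ltwonorm{u_d}$ by the standard $H^1$-estimate for \cref{NeumannCon:eq:opt_sys_state}. Next, applying \cref{theorem:regularity_state_adjoint} \ref{it:2} to the adjoint equation \cref{NeumannCon:eq:opt_sys_adjoint} with right-hand side $\ou - u_d \in L^2(\Omega)$ and homogeneous Neumann data gives $\oz \in H^2(\Omega)$ together with $\norm{\oz}_{H^2(\Omega)} \le c(\ltwonorm{\ou} + \ltwonorm{u_d}) \le c\ltwonorm{u_d}$.

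From \cref{NeumannCon:eq:opt_sys_gradient} we have $\oq = -\alpha^{-1}\oz|_{\partial\Omega}$, so the trace theorem applied to $\oz \in H^2(\Omega)$ yields $\oq \in H^1(\partial\Omega)$ with $\norm{\oq}_{H^1(\partial \Omega)} \le c\norm{\oz}_{H^2(\Omega)} \le c\ltwonorm{u_d}$. Finally, restricting to each face $F\in \mathcal{F}(\Omega)$ and using the continuous embedding $H^1(F) \hookrightarrow H^{\nicefrac{1}{2}}(F)$, we obtain $\sum_{F} \norm{\oq}_{H^{\nicefrac{1}{2}}(F)} \le c\norm{\oq}_{H^1(\partial\Omega)} \le c\ltwonorm{u_d}$, so a second application of \cref{theorem:regularity_state_adjoint} \ref{it:2} to the state equation \cref{NeumannCon:eq:opt_sys_state} with $f=0$ and $g=\oq$ gives $\ou \in H^2(\Omega)$ and $\htwonorm{\ou} \le c\ltwonorm{u_d}$. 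Summing the three estimates produces the claimed bound.

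The main conceptual obstacle is the apparent circularity: the $H^2$-regularity of $\ou$ requires $\oq|_F \in H^{\nicefrac{1}{2}}(F)$, which is obtained from $\oz \in H^2(\Omega)$, which in turn only requires $\ou \in L^2(\Omega)$. Thus breaking the loop at the level of $H^1$-regularity of $\ou$ (which comes essentially for free from the $L^2(\partial\Omega)$-bound on $\oq$) is the crucial idea, and the polyhedral trace theory enters only through the clean statement of \cref{theorem:regularity_state_adjoint} \ref{it:2}, which is already tailored to this geometry.
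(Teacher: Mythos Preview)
Your proof is correct and follows essentially the same bootstrap as the paper: adjoint $H^2$-regularity from \cref{theorem:regularity_state_adjoint}, then the trace of $\oz$ lifts $\oq$ to $H^1(\partial\Omega)$, and a second application of \cref{theorem:regularity_state_adjoint} yields $\ou\in H^2(\Omega)$. You are in fact more explicit than the paper in deriving the quantitative bound $\ltwonormdo{\oq}\le c\ltwonorm{u_d}$ (via $j'(\oq)(\oq)=0$), which the paper leaves implicit.
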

\begin{proof}
	It only remains to prove the regularity result. As already discussed before the optimal adjoint state belongs to $H^2(\Omega)$. Due to the trace estimate from \cite[Theorem 4.11]{Necas:2012} and \cref{NeumannCon:eq:opt_sys_gradient} this implies $\bar q\in H^1(\partial\Omega)$ and consequently $\bar u\in H^2(\Omega)$ according to the elliptic regularity from \cref{theorem:regularity_state_adjoint}.
\end{proof}

\section{Discretization of the state/adjoint equation.}\label{Sec:4}
For discretization we consider the space of linear finite elements $\widehat V_h \subset H^1(\Omega)$ defined on a mesh $\T_h$ from a family of shape regular quasi-uniform meshes, see, e.g., \cite{BrennerScott:2008}. The mesh $\T_h = \{K\}$ consists of cells $K$, which are open tetrahedrons. Moreover, we let $\T_h^\partial=\{F\}$ be the mesh on the boundary of $\Omega$ consisting of open triangles $F$ (the faces of the corresponding tetrahedrons), which is naturally induced by $\T_h$.
We also use the space $V_h^\partial$ of traces of functions from $\widehat V_h$, i.e.
\begin{equation}\label{FEM:eq:V_h^partial}
V_h^\partial = \Set{\tau v_h | v_h \in \widehat V_h},
\end{equation}
where $\tau \colon H^1(\Omega) \to H^{\nicefrac{1}{2}}(\partial \Omega)$ is the trace operator. 

The equation \eqref{eq:z_rhs_f} is discretized as follows. For given $g \in L^2(\partial \Omega)$ and $f \in L^2(\Omega)$ the discrete solution $u_h \in \widehat V_h$ fulfills
\begin{equation}\label{FEM:eq:poisson_inhomogenNeumann}
	u_h \in  \widehat V_h \quad:\quad (\nabla u_h,\nabla \phi_h) + (u_h,\phi_h) = (f,\phi_h) + (g,\phi_h)_{\partial \Omega} \quad \text{for all } \phi_h \in \widehat V_h.
\end{equation}

We define the Neumann Ritz projection $R_h \colon H^1(\Omega) \to \widehat V_h$ by the following orthogonality relation:
\[
(\nabla (u-R_h u),\nabla \phi_h) + (u-R_h u,\phi_h) = 0 \quad \text{for all } \phi_h \in \widehat V_h.
\]
Thus, for $u$ solving \eqref{eq:z_rhs_f} we have that $u_h = R_h u$ is the solution of \eqref{FEM:eq:poisson_inhomogenNeumann}.
 
The following standard estimate requires only $H^2(\Omega)$ regularity of the solution $u$ of~\eqref{eq:z_rhs_f}.
\begin{theorem}\label{theorem:standard_error_est_H2}
Let $f\in L^2(\Omega)$, $g\in H^{\nicefrac{1}{2}}(F)$ for all faces $F\in{\cal F}(\Omega)$ and $u$ be the solution of \eqref{eq:z_rhs_f}. Let $u_h \in \widehat V_h$ be the solution of \eqref{FEM:eq:poisson_inhomogenNeumann}. Then the following estimates hold
\begin{align*}
\ltwonorm{u-u_h}&\le ch \honenorm{u-u_h}\le ch \inf_{v_h\in\widehat V_h}\honenorm{u-v_h}\\
&\le ch^2 \left(\ltwonorm{f} + \sum_{F\in{\cal F}(\Omega)}\norm{g}_{H^{\nhalf}(F)}\right)
\end{align*}
with a constant $c>0$ only depending on $\Omega$.
\end{theorem}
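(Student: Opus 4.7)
The plan is to chain three standard arguments: an Aubin--Nitsche duality to pass from $H^1$ to $L^2$, C\'ea's lemma to pass from the Galerkin error to the best approximation error, and a Lagrange (or Scott--Zhang) interpolation estimate combined with the $H^2$ regularity from \cref{theorem:regularity_state_adjoint}\ref{it:2}.

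For the first inequality, I would introduce the dual solution $w\in H^1(\Omega)$ satisfying
\[
 (\nabla\phi,\nabla w)+(\phi,w)=(u-u_h,\phi)\quad\text{for all }\phi\in H^1(\Omega).
\]
Since the right-hand side $u-u_h$ lies in $L^2(\Omega)$ and the Neumann data is zero, \cref{theorem:regularity_state_adjoint}\ref{it:2} gives $w\in H^2(\Omega)$ together with $\htwonorm{w}\le c\ltwonorm{u-u_h}$. Choosing $\phi=u-u_h$ and using Galerkin orthogonality against the Lagrange interpolant $I_h w\in \widehat V_h$, I get
\[
 \ltwonorm{u-u_h}^2=(\nabla(u-u_h),\nabla(w-I_h w))+(u-u_h,w-I_h w)\le \honenorm{u-u_h}\honenorm{w-I_h w}.
\]
Applying the standard interpolation estimate $\honenorm{w-I_h w}\le c h\htwonorm{w}\le c h\ltwonorm{u-u_h}$ (valid because $H^2(\Omega)\hookrightarrow C(\bar\Omega)$ in three dimensions) and dividing by $\ltwonorm{u-u_h}$ yields the first inequality.

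For the second inequality, I would invoke C\'ea's lemma. The bilinear form $a(v,\phi)=(\nabla v,\nabla\phi)+(v,\phi)$ is the inner product on $H^1(\Omega)$, so coercivity and continuity hold with constant one. Galerkin orthogonality $a(u-u_h,\phi_h)=0$ for all $\phi_h\in\widehat V_h$ then gives, for any $v_h\in\widehat V_h$,
\[
 \honenorm{u-u_h}^2=a(u-u_h,u-v_h)\le\honenorm{u-u_h}\honenorm{u-v_h},
\]
hence $\honenorm{u-u_h}\le\inf_{v_h\in\widehat V_h}\honenorm{u-v_h}$.

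For the third inequality, I would choose $v_h=I_h u$. By assumption on $f$ and $g$, \cref{theorem:regularity_state_adjoint}\ref{it:2} guarantees $u\in H^2(\Omega)$ with
\[
 \htwonorm{u}\le c\bigl(\ltwonorm{f}+\textstyle\sum_{F\in\mathcal{F}(\Omega)}\norm{g}_{H^{\nhalf}(F)}\bigr),
\]
and the usual shape-regular interpolation estimate yields $\honenorm{u-I_h u}\le c h\htwonorm{u}$. Combining the three chained bounds gives the claimed $O(h^2)$ estimate. There is no genuine obstacle here: the only point requiring mild care is that the dual problem is posed with the zero-order term $+u$, so no compatibility condition is needed for solvability, and the $H^2$ regularity of $w$ is exactly what \cref{theorem:regularity_state_adjoint}\ref{it:2} delivers.
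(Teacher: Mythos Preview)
Your proof is correct and follows exactly the standard route that the paper invokes by citation to Brenner--Scott: Aubin--Nitsche duality for the $L^2$ bound, C\'ea's lemma for the best-approximation step, and the $H^2$ interpolation estimate combined with the regularity result of \cref{theorem:regularity_state_adjoint}\ref{it:2}. You have simply written out what the paper abbreviates as ``a standard result in the theory of finite element methods.''
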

\begin{proof}
	This is a standard result in the theory of finite element methods using the regularity result from \cref{theorem:regularity_state_adjoint}, see, e.g.,~\cite[Theorem (5.7.6) and eq. (5.7.5)]{BrennerScott:2008}.
\end{proof} 

Based on the finite element error estimate, the following stability estimate can be shown.

\begin{lemma}\label{theorem:L2stability}
	Let $u_h$ solve \cref{FEM:eq:poisson_inhomogenNeumann} with $f\equiv0$ and $g\in L^2(\partial\Omega)$. There is the estimate
	\[
		\norm{u_h}_{L^2(\Omega)}\le c \left(h\norm{g}_{H^{-\nicefrac12}(\partial\Omega)}+\norm{g}_{H^{-1}(\partial\Omega)}\right)
	\]
	with a constant $c>0$ only depending on $\Omega$.
\end{lemma}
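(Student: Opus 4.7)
The approach is an Aubin--Nitsche type duality argument. Let $w\in H^1(\Omega)$ be the unique weak solution of the auxiliary homogeneous-Neumann problem
\[
	-\Lap w + w = u_h \quad\text{in } \Omega,\qquad \partial_n w = 0 \quad\text{on } \partial\Omega.
\]
Since $\Omega$ is convex polyhedral and the right-hand side belongs to $L^2(\Omega)$, \cref{theorem:regularity_state_adjoint} yields $w\in H^2(\Omega)$ with $\htwonorm{w}\le c\ltwonorm{u_h}$. Testing the continuous equation with $u_h$ and integrating by parts, exploiting $\partial_n w\equiv 0$, gives the identity $\ltwonorm{u_h}^2 = (\nabla u_h,\nabla w) + (u_h,w)$.

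Set $w_h = R_h w \in \widehat V_h$. Because $u_h\in\widehat V_h$, the defining Galerkin orthogonality of the Neumann Ritz projection annihilates $(\nabla u_h,\nabla(w-w_h)) + (u_h,w-w_h)$, while the discrete equation \cref{FEM:eq:poisson_inhomogenNeumann} tested with $w_h$ (recall $f\equiv 0$) gives $(\nabla u_h,\nabla w_h)+(u_h,w_h) = (g,w_h)_{\partial\Omega}$. Combining these identities produces the key relation
\[
	\ltwonorm{u_h}^2 \;=\; (g,w_h)_{\partial\Omega} \;=\; (g,w)_{\partial\Omega} + (g,w_h-w)_{\partial\Omega}.
\]
From here the task reduces to bounding each pairing by an appropriate dual norm of $g$ times a power of $h$ supplied by the Ritz projection.

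For the first term, $H^{-1}(\partial\Omega)$--$H^{1}(\partial\Omega)$ duality combined with the (face-wise) trace estimate $\norm{w}_{H^1(\partial\Omega)}\le c\htwonorm{w}$ gives
\[
	|(g,w)_{\partial\Omega}|\le c\norm{g}_{H^{-1}(\partial\Omega)}\ltwonorm{u_h}.
\]
For the second, $H^{-\nhalf}(\partial\Omega)$--$H^{\nhalf}(\partial\Omega)$ duality, the standard trace theorem $\tau\colon H^1(\Omega)\to H^{\nhalf}(\partial\Omega)$, and the $H^1$-error estimate of \cref{theorem:standard_error_est_H2} applied to $w$ yield
\[
	|(g,w_h-w)_{\partial\Omega}|\le c\norm{g}_{H^{-\nhalf}(\partial\Omega)}\honenorm{w-w_h}\le ch\norm{g}_{H^{-\nhalf}(\partial\Omega)}\ltwonorm{u_h}.
\]
Dividing through by $\ltwonorm{u_h}$ delivers the claimed bound. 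The only slightly delicate point is the interpretation of the $H^{-1}$--$H^1$ duality on the polyhedral boundary; this is handled by defining $H^1(\partial\Omega)$ in the standard Lipschitz sense via tangential gradients (with the tangential gradient of $w$ obtained as the tangential component of the $H^{\nhalf}(\partial\Omega)$-trace of $\nabla w$), after which all remaining ingredients are routine.
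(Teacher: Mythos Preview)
Your proof is correct and follows essentially the same duality argument as the paper: both introduce the auxiliary Neumann problem with right-hand side $u_h$, pass to its Ritz projection $w_h$ to obtain $\ltwonorm{u_h}^2=(g,w_h)_{\partial\Omega}$, split into $(g,w)_{\partial\Omega}+(g,w_h-w)_{\partial\Omega}$, and estimate the two pieces via the $H^1(\partial\Omega)$ trace of $w\in H^2(\Omega)$ and the $H^{\nhalf}(\partial\Omega)$ trace of the $H^1$ Ritz error, respectively. The only cosmetic difference is that the paper reaches $(g,w_h)_{\partial\Omega}$ in one line by invoking the Ritz orthogonality directly, whereas you factor that step through the continuous identity $\ltwonorm{u_h}^2=(\nabla u_h,\nabla w)+(u_h,w)$ first.
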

\begin{proof}
	Let $w\in H^1(\Omega)$ be the weak solution of
	\[
	\begin{aligned}
		-\Lap w + w &= u_h &&\quad\text{in } \Omega,\\
		\partial_n w &=0 &&\quad\text{on } \partial \Omega,
	\end{aligned}
	\]
	and let $w_h=R_hw\in \widehat V_h$ its Neumann Ritz projection.
	There holds
	\begin{align*}
			\norm{u_h}^2_{L^2(\Omega)}&=(\nabla w_h,\nabla u_h) + (w_h,u_h)=(g,w_h)_{\partial \Omega}=(g,w_h-w)_{\partial \Omega}+(g,w)_{\partial \Omega}\\
			&\le \norm{g}_{H^{-\nicefrac12}(\partial\Omega)}\norm{w-w_h}_{H^{\nicefrac12}(\partial\Omega)}+\norm{g}_{H^{-1}(\partial\Omega)}\norm{w}_{H^{1}(\partial\Omega)}.
	\end{align*}
	The standard trace estimate and \cref{theorem:standard_error_est_H2}
	yield
	\[
		\norm{w-w_h}_{H^{\nicefrac12}(\partial\Omega)}\le c \norm{w-w_h}_{H^{1}(\Omega)}\le ch \norm{u_h}_{L^2(\Omega)}.
	\]
	Using the trace estimate from \cite[Theorem 4.11]{Necas:2012} and part 2 of \cref{theorem:regularity_state_adjoint}, we get
	\[
		\norm{w}_{H^{1}(\partial\Omega)} \le c \norm{w}_{H^{2}(\Omega)} \le c\norm{u_h}_{L^2(\Omega)}.
	\]
\end{proof}

\begin{lemma}\label{FEM:lemma:u-uhboundary_negative}
	Let $f \in L^2(\Omega)$, $g=0$ and $u$ be the solution of \eqref{eq:z_rhs_f}. Moreover, let $u_h = R_h u$ be its Neumann Ritz projection. Then there holds
	\[
	\norm{u-u_h}_{H^{-\nicefrac{1}{2}}(\partial\Omega)}+h^{\frac12}\norm{u-u_h}_{L^2(\partial\Omega)} \le c h^{2} \norm{f}_{L^2(\Omega)}
	\]
	with a positive constant $c$ only depending on $\Omega$.
\end{lemma}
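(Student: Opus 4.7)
The plan is to obtain both bounds by a single duality argument that exploits the Galerkin orthogonality of the Neumann Ritz projection $R_h$ together with the regularity results of \cref{theorem:regularity_state_adjoint}. For an arbitrary boundary datum $g_0$ I introduce the auxiliary adjoint problem $-\Lap w + w = 0$ in $\Om$ with $\partial_n w = g_0$ on $\partial\Om$, whose weak formulation reads $(\nabla w,\nabla \phi) + (w,\phi) = (g_0,\phi)_{\partial \Om}$ for all $\phi \in H^1(\Om)$. Testing with $\phi = u-u_h \in H^1(\Om)$ and using that $(\nabla v_h,\nabla(u-u_h))+(v_h,u-u_h)=0$ for every $v_h\in\widehat V_h$, I replace $w$ by $w-R_h w$ in the first argument to obtain the master identity
\[
(g_0,u-u_h)_{\partial \Om} = (\nabla(w-R_h w),\nabla(u-u_h)) + (w-R_h w,u-u_h) \le \honenorm{w-R_h w}\,\honenorm{u-u_h}.
\]
Since $u\in H^2(\Om)$ by part \ref{it:2} of \cref{theorem:regularity_state_adjoint}, \cref{theorem:standard_error_est_H2} yields $\honenorm{u-u_h}\le ch\ltwonorm{f}$, so everything reduces to controlling $\honenorm{w-R_h w}$ depending on the space in which $g_0$ is chosen.

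For the $H^{-\nhalf}(\partial\Om)$ contribution I take $g_0 \in H^{\nhalf}(\partial\Om)$; the continuity of restriction to each face provides $\sum_{F\in{\cal F}(\Om)}\norm{g_0}_{H^{\nhalf}(F)}\le c\norm{g_0}_{H^{\nhalf}(\partial\Om)}$, so part \ref{it:2} of \cref{theorem:regularity_state_adjoint} gives $\htwonorm{w}\le c\norm{g_0}_{H^{\nhalf}(\partial\Om)}$. Inserting the standard bound $\honenorm{w-R_h w}\le ch\htwonorm{w}$ from \cref{theorem:standard_error_est_H2} into the master identity yields $(g_0,u-u_h)_{\partial\Om}\le ch^2\ltwonorm{f}\norm{g_0}_{H^{\nhalf}(\partial\Om)}$, and passing to the supremum over admissible $g_0$ bounds $\norm{u-u_h}_{H^{-\nhalf}(\partial\Om)}$ by $ch^2\ltwonorm{f}$.

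For the $L^2(\partial\Om)$ contribution I choose $g_0\in L^2(\partial\Om)$; part \ref{it:1} of \cref{theorem:regularity_state_adjoint} now only delivers $w\in H^{\nicefrac{3}{2}}(\Om)$ with $\norm{w}_{H^{\nicefrac{3}{2}}(\Om)}\le c\ltwonormdo{g_0}$, from which a Scott--Zhang-type quasi-interpolation combined with the best-approximation property of $R_h$ gives $\honenorm{w-R_h w}\le ch^{\nhalf}\norm{w}_{H^{\nicefrac{3}{2}}(\Om)}$. The master identity then yields $\ltwonormdo{u-u_h}\le ch^{\nicefrac{3}{2}}\ltwonorm{f}$, and multiplying by $h^{\nhalf}$ and adding to the previous bound gives the claim. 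The delicate point is the $H^{\nicefrac{3}{2}}$ best-approximation estimate, since $\nicefrac{3}{2}$ is the borderline Sobolev exponent in three dimensions; this is handled cleanly by any standard Scott--Zhang or Cl\'ement interpolant. Alternatively, one can avoid this issue entirely by deriving the $L^2(\partial\Om)$ bound from the already established $H^{-\nhalf}(\partial\Om)$ bound via the interpolation inequality $\ltwonormdo{v}^2\le c\norm{v}_{H^{-\nhalf}(\partial\Om)}\norm{v}_{H^{\nhalf}(\partial\Om)}$ applied to $v=u-u_h$, using the trace estimate $\norm{u-u_h}_{H^{\nhalf}(\partial\Om)}\le c\honenorm{u-u_h}\le ch\ltwonorm{f}$ to bound the second factor.
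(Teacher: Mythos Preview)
Your proposal is correct. The $H^{-\nhalf}(\partial\Omega)$ part is exactly the argument used in the paper: the same auxiliary problem with Neumann datum $\delta\in H^{\nhalf}(\partial\Omega)$, the same Galerkin orthogonality identity, and the same application of \cref{theorem:standard_error_est_H2} on both factors.

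For the $L^2(\partial\Omega)$ part the paper takes a more elementary route than either of yours: it simply invokes the multiplicative trace inequality
\[
\ltwonormdo{u-u_h}\le c\,\ltwonorm{u-u_h}^{\nhalf}\,\honenorm{u-u_h}^{\nhalf}
\]
and inserts the $O(h^2)$ and $O(h)$ bounds from \cref{theorem:standard_error_est_H2}. This avoids both the $H^{\nicefrac{3}{2}}(\Omega)$ regularity of the auxiliary problem and any fractional interpolation estimate for the Scott--Zhang operator. Your unified duality argument works as well, and in fact the $H^{\nicefrac{3}{2}}$ regularity you need is precisely part~\ref{it:1} of \cref{theorem:regularity_state_adjoint}, while the fractional best-approximation bound is used later in the paper anyway (in the proof of \cref{FEM:Theorem:u-uhboundary}); so nothing new is required, it is just slightly heavier machinery for this particular step. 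Your alternative via the boundary interpolation inequality between $H^{-\nhalf}(\partial\Omega)$ and $H^{\nhalf}(\partial\Omega)$ is closest in spirit to the paper's trace-interpolation argument and is equally clean.
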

\begin{proof}
	Let us start with the $L^2(\partial\Omega)$-estimate. This is a direct consequence of the trace theorem from \cite[Theorem 1.6.6]{BrennerScott:2008} and the finite element error estimate from \cref{theorem:standard_error_est_H2}, i.e.,
	\[
	\norm{u-u_h}_{L^2(\partial\Omega)}\le \norm{u-u_h}_{L^2(\Omega)}^{\frac12} \norm{u-u_h}_{H^1(\Omega)}^{\frac12} \le c h^{\frac32} \norm{f}_{L^2(\Omega)}.
	\]
	For the second estimate, let $w\in H^1(\Omega)$ be the weak solution of
	\[
	\begin{aligned}
		-\Lap w + w &= 0 &&\quad\text{in } \Omega,\\
		\partial_n w &=\delta &&\quad\text{on } \partial \Omega,
	\end{aligned}
	\]
	with some $\delta\in H^{\nicefrac{1}{2}}(\partial\Omega)$ and let $w_h=R_hw\in \widehat V_h$ its Neumann Ritz projection. There holds due the Galerkin orthogonality and \cref{theorem:standard_error_est_H2}
	\begin{align*}
		(\delta,u-u_h)_{\partial\Omega}&=(\nabla (w-w_h),\nabla(u-u_h))+(w-w_h,u-u_h)\\
		&\le \honenorm{w-w_h}\honenorm{u-u_h}\\
		&\le c h^2 \norm{\delta}_{H^{\nicefrac{1}{2}}(\partial\Omega)}\ltwonorm{f}.
	\end{align*}
	As a consequence we obtain
	\[
	\norm{u-u_h}_{H^{-\nicefrac{1}{2}}(\partial\Omega)}=\sup_{\delta\in H^{\nicefrac{1}{2}}(\partial\Omega),\, \delta \not\equiv 0}\frac{(\delta,u-u_h)_{\partial\Omega}}{\norm{\delta}_{H^{\nicefrac{1}{2}}(\partial\Omega)}} \le c h^2 \norm{f}_{L^2(\Omega)}.
	\]
\end{proof}
The latter result is optimal in case that $H^2(\Omega)$ regularity of the solution $u$ is available only. If the solution is more regular, a higher convergence rate is possible in $L^2(\partial\Omega)$. In order to show the improved error estimates we require a best approximation result with respect to a weighted norm. To this end we define a regularized weight $\tilde \rho \colon \bar \Omega \to \R_+$ by
\begin{equation}\label{eq:tilde_rho}
	\quad \tilde \rho(x) = \sqrt{\rho(x)^2+h^2}.
\end{equation} 
By a direct calculation we obtain 
\begin{equation}\label{eq:prop_tilde_rho}
	\abs{\nabla \tilde \rho}  \le c \quad \text{and} \quad \abs{\nabla^2 \tilde \rho} \le c \tilde \rho^{-1}.
\end{equation}
A direct consequence of the boundedness of the gradient of $\tilde \rho$ is the existence of a constant $c$ independent of $h$ such that the following estimate hold
\begin{equation}\label{eq:tilde_rho_local}
	\max_{x \in \bar K} \tilde \rho(x) \le c \min_{x \in \bar K} \tilde \rho (x) \quad \text{for all } K \in \T_h.
\end{equation}
This leads to weighted interpolation estimates for the Lagrange interpolation $i_h \colon C(\bar \Omega) \to \widehat V_h$.
\begin{lemma}\label{FEM:lemma:est_interpolation_general_weight}
	Let $\alpha \in \R$ be arbitrary. Then there is a constant $c>0$ independent of $h$ such that
	\[
	\ltwonorm{\tilde \rho^\alpha(v-i_h v)} + h \ltwonorm{\tilde \rho^\alpha\nabla (v-i_h v)} \le c h^2 \ltwonorm{\tilde \rho^\alpha \nabla^2 v}
	\]
	holds for all $v \in H^2(\Omega)$.
\end{lemma}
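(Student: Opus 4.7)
The plan is to localize the estimate cell by cell, exploit the fact that the regularized weight $\tilde\rho$ is essentially constant on each mesh cell by \cref{eq:tilde_rho_local}, and then apply the standard (unweighted) Lagrange interpolation estimates on each $K$. The entire point of introducing $\tilde\rho = \sqrt{\rho^2+h^2}$ rather than $\rho$ itself is precisely to enable this localization: near the boundary $\rho$ can become arbitrarily small within a single cell, while $\tilde\rho$ cannot drop below order $h$, so it varies by at most a bounded factor across a cell of diameter $\OO(h)$.

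Concretely, I would first rewrite the weighted $L^2$-norm as a sum over cells,
\[
\ltwonorm{\tilde\rho^\alpha(v-i_hv)}^2 = \sumk \int_K \tilde\rho^{2\alpha}\abs{v-i_hv}^2\,\mathrm{d}x,
\]
and similarly for the gradient term. Fix an arbitrary $K\in\T_h$ and any point $x_K\in\bar K$. Raising \cref{eq:tilde_rho_local} to the power $2\alpha$ (allowing $\alpha$ of either sign and absorbing a factor depending only on $\alpha$) gives
\[
\max_{x\in\bar K}\tilde\rho(x)^{2\alpha} \le c\,\tilde\rho(x_K)^{2\alpha} \le c \min_{x\in\bar K}\tilde\rho(x)^{2\alpha},
\]
with $c$ independent of $h$ and $K$. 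This allows the weight to be pulled outside the cell integral, reducing matters to standard Lagrange interpolation estimates
\[
\int_K \abs{v-i_hv}^2\,\mathrm{d}x \le c h^4 \int_K\abs{\nabla^2 v}^2\,\mathrm{d}x, \qquad \int_K\abs{\nabla(v-i_hv)}^2\,\mathrm{d}x \le c h^2 \int_K\abs{\nabla^2 v}^2\,\mathrm{d}x,
\]
which hold on shape-regular tetrahedra for $v\in H^2(K)$ (note that $H^2(\Omega)\hookrightarrow C(\bar\Omega)$ in three dimensions, so $i_h v$ is well defined). Pulling the weight back inside by the same local equivalence, one obtains
\[
\int_K \tilde\rho^{2\alpha}\abs{v-i_hv}^2\,\mathrm{d}x \le c h^4 \int_K\tilde\rho^{2\alpha}\abs{\nabla^2 v}^2\,\mathrm{d}x,
\]
and analogously for the gradient term with $h^2$ in place of $h^4$. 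Summing over $K\in\T_h$ yields both claimed estimates.

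The argument is essentially bookkeeping; the only substantive point, and thus the main obstacle to verify, is the cellwise weight-equivalence \cref{eq:tilde_rho_local}. This in turn follows from the bound $\abs{\nabla\tilde\rho}\le c$ in \cref{eq:prop_tilde_rho} together with the shape regularity and quasi-uniformity of the mesh: for any $x,y\in\bar K$, $\abs{\tilde\rho(x)-\tilde\rho(y)}\le c h \le c\tilde\rho(x)$ since $\tilde\rho\ge h$ everywhere, giving $\tilde\rho(x)/\tilde\rho(y)\le C$ uniformly. Without the $h^2$ regularization this crucial property would fail precisely on cells abutting $\partial\Omega$, which explains why the regularized weight $\tilde\rho$ rather than $\rho$ is the correct object here.
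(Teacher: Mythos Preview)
Your argument is correct and is exactly the approach the paper has in mind: the paper does not give a detailed proof but states the lemma as a direct consequence of the cellwise weight equivalence \cref{eq:tilde_rho_local}, which is precisely the localization-plus-standard-interpolation argument you carry out. Your additional justification of \cref{eq:tilde_rho_local} from $\abs{\nabla\tilde\rho}\le c$ and $\tilde\rho\ge h$ is also in line with the paper's remark preceding the lemma.
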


The next result is a superconvergence type estimate, see \cite[Lemma 4]{PfeffererVexler:2024}. A similar result for another weight  (regularized distance to a point) can be found, e.g., in \cite[Lemma 3]{LeykekhmanD_VexlerB_2016c}. 
\begin{lemma}\label{FEM:lemma:superconvergence_ih}
	Let $\alpha,\beta \in \R$ be arbitrary. There is a constant $c>0$ independent of $h$ such that	
	\[
	\ltwonorm{\tilde \rho^\alpha(\operatorname{Id}-i_h)(\tilde \rho^\beta v_h)} + h\ltwonorm{\tilde \rho^\alpha\nabla  (\operatorname{Id}-i_h)(\tilde \rho^\beta v_h)} \le c h \ltwonorm{\tilde \rho^{\alpha+\beta-1}v_h}
	\]
	holds for all $v_h \in \widehat V_h$.
\end{lemma}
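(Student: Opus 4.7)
The plan is to argue cell by cell, exploit that $v_h$ is piecewise linear (so $\nabla^2 v_h=0$ on each $K\in\T_h$), and combine standard local interpolation estimates with the quasi-constancy of $\tilde\rho$ on each cell. The two key ingredients are \cref{eq:prop_tilde_rho} for the derivatives of $\tilde\rho$ and \cref{eq:tilde_rho_local} for the local variation of $\tilde\rho$, together with the trivial lower bound $\tilde\rho(x)\ge h$ following from the definition \cref{eq:tilde_rho}.

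First, I would fix a cell $K\in\T_h$ and set $w=\tilde\rho^\beta v_h$. By the product rule and $\nabla^2 v_h\equiv 0$ on $K$,
\[
\nabla^2 w = \nabla^2(\tilde\rho^\beta)\, v_h + 2\,\nabla(\tilde\rho^\beta)\otimes \nabla v_h.
\]
Using \cref{eq:prop_tilde_rho} one has $|\nabla(\tilde\rho^\beta)|\le c\tilde\rho^{\beta-1}$ and $|\nabla^2(\tilde\rho^\beta)|\le c\tilde\rho^{\beta-2}$ (pointwise on $K$). Therefore, after multiplying by $\tilde\rho^\alpha$ and using \cref{eq:tilde_rho_local} to replace $\tilde\rho$ by a constant on $K$,
\[
\ltwonormk{\tilde\rho^\alpha\nabla^2 w} \le c\,\ltwonormk{\tilde\rho^{\alpha+\beta-2}v_h} + c\,\ltwonormk{\tilde\rho^{\alpha+\beta-1}\nabla v_h}.
\]
Next I would estimate each of these two terms by $ch^{-1}\ltwonormk{\tilde\rho^{\alpha+\beta-1}v_h}$. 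For the first term this follows from $\tilde\rho\ge h$, which gives $\tilde\rho^{-1}\le h^{-1}$. For the second one I would apply the standard inverse estimate $\|\nabla v_h\|_{L^2(K)}\le c h^{-1}\|v_h\|_{L^2(K)}$ on the shape-regular quasi-uniform mesh, and again use \cref{eq:tilde_rho_local} to pull the (essentially constant) weight $\tilde\rho^{\alpha+\beta-1}$ out and back in.

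Now I would apply a standard Bramble–Hilbert style local interpolation estimate on $K$ (the Lagrange interpolation error satisfies $\|v-i_h v\|_{L^2(K)}+h\|\nabla(v-i_h v)\|_{L^2(K)}\le ch^2\|\nabla^2 v\|_{L^2(K)}$) to $v=w$, and move the quasi-constant factor $\tilde\rho^\alpha$ inside/outside via \cref{eq:tilde_rho_local}. This yields
\[
\ltwonormk{\tilde\rho^\alpha(\operatorname{Id}-i_h)w} + h\,\ltwonormk{\tilde\rho^\alpha\nabla(\operatorname{Id}-i_h)w} \le ch^2\,\ltwonormk{\tilde\rho^\alpha\nabla^2 w} \le ch\,\ltwonormk{\tilde\rho^{\alpha+\beta-1}v_h}.
\]
Squaring and summing over $K\in\T_h$ gives the claimed global bound.

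The only mildly delicate point, and the one I would be most careful about, is the handling of the $\tilde\rho^{\alpha+\beta-2}v_h$ term: the crude bound $\tilde\rho^{-1}\le h^{-1}$ would naively cost too much if the weight $\tilde\rho$ could be much smaller than $h$, but the regularization in \cref{eq:tilde_rho} precisely prevents this, and this is exactly where the regularized distance is used in place of $\rho$. Everything else is bookkeeping with the product rule and \cref{eq:tilde_rho_local}, so the proof goes through uniformly in $\alpha$ and $\beta$.
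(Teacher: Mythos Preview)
Your proof is correct and follows exactly the standard argument for such weighted superconvergence estimates: local interpolation error combined with the product rule, the inverse inequality, the quasi-constancy \cref{eq:tilde_rho_local}, and the crucial lower bound $\tilde\rho\ge h$. The paper itself does not include a proof of this lemma but refers to \cite[Lemma~4]{PfeffererVexler:2024} (and, for a different weight, \cite[Lemma~3]{LeykekhmanD_VexlerB_2016c}); your sketch reproduces precisely that line of argument.
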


The next lemma provides a weighted stability result for the Neumann Ritz projection.

\begin{lemma}\label{FEM:lemma:Neumann_weighted_stabilty}
	Let $u \in H^1(\Omega)$ and $u_h = R_h u \in \widehat V_h$ be its Neumann Ritz projection. There holds
	\[
	\ltwonorm{\tilde \rho^{\half}u_h}+\ltwonorm{\tilde \rho^{\half}\nabla u_h} \le c \left(\ltwonorm{\tilde \rho^{\half}\nabla u} + \ltwonorm{\tilde \rho^{\half}u}+ \ltwonorm{\tilde \rho^{-\half}u_h}\right)
	\]
	with a constant $c>0$ independent of $h$.
\end{lemma}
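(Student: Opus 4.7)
The plan is to use the Galerkin orthogonality with the test function $\phi_h=i_h(\tilde\rho\, u_h)\in\widehat V_h$, which is the natural discrete substitute for the desired multiplier $\tilde\rho\,u_h$. The key is that the commutator between $\tilde\rho$ and the finite element differentiation is controlled by the superconvergence estimate \cref{FEM:lemma:superconvergence_ih} and the pointwise bound $|\nabla\tilde\rho|\le c$ from \cref{eq:prop_tilde_rho}. I abbreviate $\eta:=\tilde\rho\, u_h - i_h(\tilde\rho\, u_h)$ throughout.

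First I would rewrite
\[
\ltwonorm{\tilde\rho^{\half}u_h}^2+\ltwonorm{\tilde\rho^{\half}\nabla u_h}^2 = (\tilde\rho\, u_h,u_h)+(\tilde\rho\,\nabla u_h,\nabla u_h).
\]
Expanding $\nabla(\tilde\rho\, u_h)=\tilde\rho\nabla u_h+u_h\nabla\tilde\rho$ and then using $\tilde\rho\, u_h = i_h(\tilde\rho\, u_h)+\eta$, the right-hand side becomes
\[
(\nabla u_h,\nabla i_h(\tilde\rho u_h))+(u_h,i_h(\tilde\rho u_h))+(\nabla u_h,\nabla\eta)+(u_h,\eta)-(\nabla u_h,u_h\nabla\tilde\rho).
\]
Galerkin orthogonality for $R_h$ converts the first two terms to the analogous expression with $u$ in place of $u_h$, producing the right-hand side we want. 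What remains are the interpolation error terms, the commutator term $(\nabla u_h,u_h\nabla\tilde\rho)$, and the analogous $u$-terms that must be bounded by the data on the right.

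Next I would estimate each piece. For the commutator, $|\nabla\tilde\rho|\le c$ gives $|(\nabla u_h,u_h\nabla\tilde\rho)|\le c\,\ltwonorm{\tilde\rho^{\nhalf}\nabla u_h}\ltwonorm{\tilde\rho^{-\nhalf}u_h}$. For the interpolation error, I would apply \cref{FEM:lemma:superconvergence_ih} with $\alpha=-\nhalf$, $\beta=1$, $v_h=u_h$, yielding
\[
\ltwonorm{\tilde\rho^{-\nhalf}\eta}+h\,\ltwonorm{\tilde\rho^{-\nhalf}\nabla\eta}\le c\,h\,\ltwonorm{\tilde\rho^{-\nhalf}u_h}.
\]
This lets me bound $(\nabla u_h,\nabla\eta)$ and $(u_h,\eta)$ by $c\,\ltwonorm{\tilde\rho^{\nhalf}\nabla u_h}\ltwonorm{\tilde\rho^{-\nhalf}u_h}$ and $c\,h\,\ltwonorm{\tilde\rho^{\nhalf}u_h}\ltwonorm{\tilde\rho^{-\nhalf}u_h}$ respectively. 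For the right-hand side contribution $(\nabla u,\nabla i_h(\tilde\rho u_h))+(u,i_h(\tilde\rho u_h))$, I again split $i_h(\tilde\rho u_h)=\tilde\rho u_h-\eta$ and expand the gradient; the three resulting pieces are all of the form $\ltwonorm{\tilde\rho^{\nhalf}\nabla u}$ or $\ltwonorm{\tilde\rho^{\nhalf}u}$ times one of the quantities already appearing above.

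Finally a Cauchy–Schwarz plus Young's inequality argument absorbs the terms $\ltwonorm{\tilde\rho^{\nhalf}\nabla u_h}$ and $\ltwonorm{\tilde\rho^{\nhalf}u_h}$ on the left-hand side, leaving the claimed bound with $\ltwonorm{\tilde\rho^{\nhalf}\nabla u}+\ltwonorm{\tilde\rho^{\nhalf}u}+\ltwonorm{\tilde\rho^{-\nhalf}u_h}$ on the right. The main obstacle is purely bookkeeping: every time a $\tilde\rho$ weight is traded between factors in an inner product, one must verify that the weighted interpolation/superconvergence estimate has matching exponents, and that every uncontrolled factor that appears can ultimately be absorbed or identified with one of the three terms on the right. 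The boundedness $|\nabla\tilde\rho|\le c$ is critical because it prevents the commutator with $\tilde\rho$ from losing a power of $\tilde\rho^{-1}$ that would require the (unavailable) bound $\ltwonorm{\tilde\rho^{-\nhalf}\nabla u_h}$.
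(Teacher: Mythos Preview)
Your proposal is correct and follows essentially the same route as the paper: test with $i_h(\tilde\rho\,u_h)$, use Galerkin orthogonality to pass to $u$, control the interpolation remainder $\eta=(\operatorname{Id}-i_h)(\tilde\rho\,u_h)$ via the superconvergence estimate \cref{FEM:lemma:superconvergence_ih} with $\alpha=-\nhalf$, $\beta=1$, bound the commutator by $|\nabla\tilde\rho|\le c$, and absorb via Young's inequality. The bookkeeping you describe matches the paper's $I_1$--$I_5$ decomposition almost term for term.
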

\begin{proof}
	We obtain by introducing intermediate terms
	\[
	\begin{aligned}
		&\ltwonorm{\tilde \rho^{\half}u_h}^2+\ltwonorm{\tilde \rho^{\half}\nabla u_h}^2=(\tilde \rho u_h,u_h)+(\nabla(\tilde \rho u_h),\nabla u_h) - (u_h\nabla \tilde \rho,\nabla u_h)\\
		&\quad=((\operatorname{Id}-i_h)(\tilde \rho u_h),u_h)+(\nabla(\operatorname{Id}-i_h)(\tilde \rho u_h),\nabla u_h)+(i_h(\tilde \rho u_h),u_h)+(\nabla i_h(\tilde \rho u_h),\nabla u_h)- (u_h\nabla \tilde \rho,\nabla u_h)\\
		&\quad = I_1+I_2+I_3+I_4+I_5.
	\end{aligned}
	\]
	For $I_1+I_2$ we get with \cref{FEM:lemma:superconvergence_ih}
	\begin{align*}
		I_1+I_2
		&\le\ltwonorm{\tilde \rho^{-\half}(\operatorname{Id}-i_h)(\tilde \rho u_h)}\ltwonorm{\tilde \rho^{\half}u_h}+\ltwonorm{\tilde \rho^{-\half}\nabla (\operatorname{Id}-i_h)(\tilde \rho u_h)}\ltwonorm{\tilde \rho^{\half}\nabla u_h}\\
		&\le c\left(h\ltwonorm{\tilde \rho^{-\half}u_h}\ltwonorm{\tilde \rho^{\half}u_h}+\ltwonorm{\tilde \rho^{-\half}u_h}\ltwonorm{\tilde \rho^{\half}\nabla u_h}\right)\\
		&\le c\ltwonorm{\tilde \rho^{-\half}u_h}^2 + \frac14 \left(\ltwonorm{\tilde \rho^{\half}u_h}^2+\ltwonorm{\tilde \rho^{\half}\nabla u_h}^2\right).
	\end{align*}
	By definition of the Neumann Ritz projection and \cref{FEM:lemma:superconvergence_ih} we have for $I_3+I_4$
	\begin{align*}
		I_3+I_4&=(i_h(\tilde \rho u_h),u)+(\nabla i_h(\tilde \rho u_h),\nabla u)\\
		&=((i_h-\operatorname{Id})(\tilde \rho u_h),u)+(\nabla (i_h-\operatorname{Id})(\tilde \rho u_h),\nabla u)+(\tilde \rho u_h,u)+(\nabla (\tilde \rho u_h),\nabla u)\\
		&\le \left(\ltwonorm{\tilde \rho^{-\half}(\operatorname{Id}-i_h)(\tilde \rho u_h)}+\ltwonorm{\tilde \rho^{\half}u_h}\right)\ltwonorm{\tilde \rho^{\half}u}\\
		&\quad+\left(\ltwonorm{\tilde \rho^{-\half}\nabla (\operatorname{Id}-i_h)(\tilde \rho u_h)}+\ltwonorm{\tilde \rho^{-\half}\nabla (\tilde\rho u_h)}\right)\ltwonorm{\tilde \rho^{\half}\nabla u}\\
		&\le \left(ch\ltwonorm{\tilde \rho^{-\half}u_h}+\ltwonorm{\tilde \rho^{\half}u_h}\right)\ltwonorm{\tilde \rho^{\half}u}\\
		&\quad + \left(c\ltwonorm{\tilde \rho^{-\half}u_h}+\ltwonorm{\tilde\rho^{-\half}u_h\nabla \tilde\rho }+\ltwonorm{\tilde \rho^{\half}\nabla u_h}\right)\ltwonorm{\tilde \rho^{\half}\nabla u}.
	\end{align*}
	Since $\tilde \rho \ge h$ and $\abs{\nabla \tilde \rho} \le c$ we further deduce
	\begin{align*}
		I_3+I_4&\le c\ltwonorm{\tilde \rho^{\half}u_h}\ltwonorm{\tilde \rho^{\half}u} + c\left(\ltwonorm{\tilde \rho^{-\half}u_h}+\ltwonorm{\tilde \rho^{\half}\nabla u_h}\right)\ltwonorm{\tilde \rho^{\half}\nabla u}\\
		&\le c\left( \ltwonorm{\tilde \rho^{\half}\nabla u}^2+\ltwonorm{\tilde \rho^{\half}u}^2+ \ltwonorm{\tilde \rho^{-\half}u_h}^2\right)+\frac14 \left(\ltwonorm{\tilde \rho^{\half}u_h}^2+\ltwonorm{\tilde \rho^{\half}\nabla u_h}^2\right).
	\end{align*}
	For $I_5$ we obtain by $\abs{\nabla \tilde \rho} \le c$
	\[
		I_5\le \ltwonorm{\tilde\rho^{-\half}u_h\nabla \tilde\rho }\ltwonorm{\tilde \rho^{\half}\nabla u_h}\le c\ltwonorm{\tilde\rho^{-\half}u_h}^2 + \frac14\ltwonorm{\tilde \rho^{\half}\nabla u_h}^2
	\]
	Putting terms together and absorbing the terms involving $\tilde \rho^{\half}u_h$ and $\tilde \rho^{\half}\nabla u_h$ into the left-hand side we complete the proof.
\end{proof}

The next theorem provides a best approximation result in weighted norms, which will be used in the sequel, but seems also to be of independent  interest.

\begin{theorem}\label{FEM:Neumann:weighted_best_approximation}
	Let $u \in H^1(\Omega)$ and $u_h = R_h u \in \widehat V_h$ be its Neumann Ritz projection. There is a constant $c>0$ independent of $h$ such that
	\begin{multline}
		\ltwonorm{\tilde \rho^{-\half}(u-u_h)} + \ltwonorm{\tilde \rho^{\half}\nabla (u-u_h)}\\ \le c\left(\ltwonorm{\tilde \rho^{\half}\nabla (u-v_h)} + \ltwonorm{\tilde \rho^{-\half}(u-v_h)}\right)
	\end{multline}
	for every $v_h \in \widehat V_h$.
\end{theorem}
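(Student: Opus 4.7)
The plan is to handle the two parts of the left-hand side essentially separately and then close the loop via a kickback argument. Write $u - u_h = (u - v_h) - e_h$ with $e_h := u_h - v_h$; since $R_h v_h = v_h$ for any $v_h \in \widehat V_h$, we have $e_h = R_h(u - v_h) \in \widehat V_h$, so that \cref{FEM:lemma:Neumann_weighted_stabilty} applied with the replacements $u \leftarrow u - v_h$ and $u_h \leftarrow e_h$ yields
\[
\ltwonorm{\tilde\rho^{\half}e_h} + \ltwonorm{\tilde\rho^{\half}\nabla e_h} \le c\bigl(\ltwonorm{\tilde\rho^{\half}\nabla(u-v_h)} + \ltwonorm{\tilde\rho^{\half}(u-v_h)} + \ltwonorm{\tilde\rho^{-\half}e_h}\bigr).
\]
Using that $\tilde\rho$ is bounded on $\bar\Omega$ (so $\ltwonorm{\tilde\rho^{\half}(u-v_h)} \le c\ltwonorm{\tilde\rho^{-\half}(u-v_h)}$), the triangle inequality $\ltwonorm{\tilde\rho^{-\half}e_h} \le \ltwonorm{\tilde\rho^{-\half}(u-v_h)} + \ltwonorm{\tilde\rho^{-\half}(u-u_h)}$, and a further triangle inequality for $\ltwonorm{\tilde\rho^{\half}\nabla(u-u_h)}$, I would then deduce
\[
\ltwonorm{\tilde\rho^{\half}\nabla(u-u_h)} \le c\bigl(\ltwonorm{\tilde\rho^{\half}\nabla(u-v_h)} + \ltwonorm{\tilde\rho^{-\half}(u-v_h)}\bigr) + c\,\ltwonorm{\tilde\rho^{-\half}(u-u_h)}.
\]

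The remaining negative-weighted term on the right would be controlled by a duality argument. Let $w \in H^1(\Omega)$ solve $-\Lap w + w = \tilde\rho^{-1}(u-u_h)$ in $\Omega$ with $\partial_n w = 0$ on $\partial\Omega$. Galerkin orthogonality then gives
\[
\ltwonorm{\tilde\rho^{-\half}(u-u_h)}^2 = (\nabla(w-i_h w),\nabla(u-u_h)) + (w-i_hw,u-u_h),
\]
which weighted Cauchy--Schwarz bounds by
\[
\ltwonorm{\tilde\rho^{-\half}\nabla(w-i_hw)}\ltwonorm{\tilde\rho^{\half}\nabla(u-u_h)} + \ltwonorm{\tilde\rho^{-\half}(w-i_hw)}\ltwonorm{\tilde\rho^{\half}(u-u_h)}.
\]
The interpolation errors would be estimated using \cref{FEM:lemma:est_interpolation_general_weight} with $\alpha = -\half$ in terms of $\ltwonorm{\tilde\rho^{-\half}\nabla^2 w}$, and the latter is controlled by combining the standard $H^2$ regularity from \cref{theorem:regularity_state_adjoint} with the pointwise bounds $\linfnorm{\tilde\rho^{-\half}} \le h^{-\half}$ and $\ltwonorm{\tilde\rho^{-1}(u-u_h)} \le h^{-\half}\ltwonorm{\tilde\rho^{-\half}(u-u_h)}$ (both following from $\tilde\rho \ge h$). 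After dividing by $\ltwonorm{\tilde\rho^{-\half}(u-u_h)}$, invoking $\ltwonorm{\tilde\rho^{\half}(u-u_h)} \le c\ltwonorm{\tilde\rho^{-\half}(u-u_h)}$, and absorbing the resulting $O(h)$ contribution for sufficiently small $h$, this produces
\[
\ltwonorm{\tilde\rho^{-\half}(u-u_h)} \le c\,\ltwonorm{\tilde\rho^{\half}\nabla(u-u_h)}.
\]

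Substituting this bound into the display from the first paragraph reduces the claim to a kickback of the form $\ltwonorm{\tilde\rho^{\half}\nabla(u-u_h)} \le c R + c'\,\ltwonorm{\tilde\rho^{\half}\nabla(u-u_h)}$, where $R$ denotes the right-hand side of the claim and $c'$ is the product of the constants arising in the weighted stability and duality steps. The hard part will be to ensure $c' < 1$ so that the gradient term can be absorbed on the left; I expect this to require a careful tracking of the constants through both arguments, or, alternatively, to perform the two estimates within a single weighted energy identity $\ltwonorm{\tilde\rho^{\half}\nabla(u-u_h)}^2 + \ltwonorm{\tilde\rho^{\half}(u-u_h)}^2 = a(u-u_h,\tilde\rho(u-u_h) - \psi_h) - ((u-u_h)\nabla\tilde\rho,\nabla(u-u_h))$ with $\psi_h = -i_h(\tilde\rho e_h) \in \widehat V_h$, so that Galerkin orthogonality, \cref{FEM:lemma:superconvergence_ih} (with $\alpha=-\half,\beta=1$), and Young's inequality with a small parameter deliver the absorption in one shot. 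Once this closes, both norms on the left of the claim are controlled by $R$ as desired.
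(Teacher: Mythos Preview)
Your first paragraph---applying \cref{FEM:lemma:Neumann_weighted_stabilty} to $e_h = R_h(u-v_h)$ and reducing the weighted gradient error to $R + c\,\ltwonorm{\tilde\rho^{-\half}(u-u_h)}$---is exactly what the paper does. The divergence, and the gap, is in how you handle $\ltwonorm{\tilde\rho^{-\half}(u-u_h)}$.

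Your duality argument produces only
\[
\ltwonorm{\tilde\rho^{-\half}(u-u_h)} \le c\,\ltwonorm{\tilde\rho^{\half}\nabla(u-u_h)},
\]
with a constant $c$ coming from $H^2$ regularity and interpolation; neither this constant nor the one from the stability lemma is small, so the kickback $c'<1$ you need in the last step is not available. You recognise this yourself (``the hard part will be to ensure $c'<1$''), and the alternative one-shot weighted energy identity you sketch has the same issue: the cross term $((u-u_h)\nabla\tilde\rho,\nabla(u-u_h))$ is controlled by $\ltwonorm{\tilde\rho^{-\half}(u-u_h)}\,\ltwonorm{\tilde\rho^{\half}\nabla(u-u_h)}$ with a constant of order one, and Young's inequality cannot make both pieces absorbable simultaneously. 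So as written the argument does not close.

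The paper sidesteps this entirely by bounding $\ltwonorm{\tilde\rho^{-\half}(u-u_h)}$ \emph{directly} by the right-hand side~$R$, not by the weighted gradient error. The observation is elementary: since $\tilde\rho\ge h$,
\[
\ltwonorm{\tilde\rho^{-\half}(u-u_h)} \le h^{-\half}\ltwonorm{u-u_h} \le c\,h^{\half}\honenorm{u-v_h},
\]
using the standard unweighted estimate $\ltwonorm{u-u_h}\le ch\inf_{w_h}\honenorm{u-w_h}$ from \cref{theorem:standard_error_est_H2} (Aubin--Nitsche plus C\'ea). Then $h^{\half}\le\tilde\rho^{\half}$ pointwise gives $h^{\half}\ltwonorm{\nabla(u-v_h)}\le\ltwonorm{\tilde\rho^{\half}\nabla(u-v_h)}$, and boundedness of $\tilde\rho$ gives $h^{\half}\ltwonorm{u-v_h}\le c\,\ltwonorm{\tilde\rho^{-\half}(u-v_h)}$. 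This yields $\ltwonorm{\tilde\rho^{-\half}(u-u_h)}\le cR$ with no circularity, and the proof is finished.
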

\begin{proof}
	We first observe that for every $v_h\in\widehat{V}_h$ there holds
	\begin{align*}
		\ltwonorm{\tilde \rho^{\half}\nabla (u-u_h)} &\le \ltwonorm{\tilde \rho^{\half}\nabla (u-v_h)} + \ltwonorm{\tilde \rho^{\half}\nabla (v_h-u_h)}.
	\end{align*}
	Application of \cref{FEM:lemma:Neumann_weighted_stabilty} with $v_h-u_h = R_h(v_h-u)$ results in the following estimate for the last term:
	\begin{align*}
		\ltwonorm{\tilde \rho^{\half}\nabla (v_h-u_h)}&\le c\left(\ltwonorm{\tilde \rho^{\half}\nabla (u-v_h)} + \ltwonorm{\tilde \rho^{\half}(u-v_h)}+\ltwonorm{\tilde \rho^{-\half}(v_h-u_h)}\right)\\
		&\le c\left(\ltwonorm{\tilde \rho^{\half}\nabla (u-v_h)} + \ltwonorm{\tilde \rho^{-\half}(u-v_h)}+\ltwonorm{\tilde \rho^{-\half}(u-u_h)}\right),
	\end{align*}
	where we inserted $u$ as an intermediate function and employed $\max_{x\in\Omega}\abs{\tilde \rho}\le c$.
	It remains to appropriately estimate $\ltwonorm{\tilde \rho^{-\half}(u-u_h)}$. To this end, we simply use the fact that $(\tilde \rho(x))^{-1}\le h^{-1}$ for all $x\in\Omega$ and \cref{theorem:standard_error_est_H2}. This yields
	\begin{align*}
		\ltwonorm{\tilde \rho^{-\half}(u-u_h)}&\le ch^{-\half}\ltwonorm{u-u_h}\le ch^{\half}\honenorm{u-v_h}\\
		&\le c\left(\ltwonorm{\tilde \rho^{\half}\nabla (u-v_h)} + \ltwonorm{\tilde \rho^{-\half}(u-v_h)}\right),
	\end{align*}
	where we also used $\max_{x\in\Omega}\abs{\tilde \rho}\le c$ in the last step. Collecting the different estimates ends the proof.
\end{proof}

The next theorem provides an improved error estimate for $u-u_h$ on the boundary~$\partial \Omega$ under the regularity assumption $u \in H^2_{-s}(\Omega)$.
\begin{theorem}\label{FEM:Theorem:u-uhboundary}
Let $u \in H^1(\Omega)$ possess additional weighted regularity $u \in H^2_{-s}(\Omega)$ for some $0<s<\half$. Let $u_h = R_h u \in \widehat V_h$ be its Neumann Ritz projection. Then there holds the following estimate
\[
\ltwonormdo{u-u_h} \le c h^{\frac{3}{2}+s} \norm{u}_{H^2_{-s}(\Omega)}
\]
with a constant $c>0$ only depending on $\Omega$.
\end{theorem}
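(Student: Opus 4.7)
The plan is to combine a trace inequality on the boundary strip $\Omega_h=\{x\in\Omega:\dist(x,\partial\Omega)<h\}$ with the weighted best-approximation result of \cref{FEM:Neumann:weighted_best_approximation}, and to exploit $u\in H^2_{-s}(\Omega)$ via careful manipulation of the regularized distance weight $\tilde\rho$.

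First I would apply the standard trace inequality on $\Omega_h$ to $v=u-u_h$,
\[
\|v\|_{L^2(\partial\Omega)}^2\le c\bigl(h^{-1}\|v\|^2_{L^2(\Omega_h)}+h\|\nabla v\|^2_{L^2(\Omega_h)}\bigr),
\]
and, observing that $\tilde\rho\sim h$ on $\Omega_h$, rewrite the right-hand side in terms of global weighted $L^2$-norms:
\[
\|u-u_h\|_{L^2(\partial\Omega)}^2\le c\bigl(\|\tilde\rho^{-1/2}(u-u_h)\|_{L^2(\Omega)}^2+\|\tilde\rho^{1/2}\nabla(u-u_h)\|_{L^2(\Omega)}^2\bigr).
\]

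Next, I would invoke \cref{FEM:Neumann:weighted_best_approximation} with $v_h=i_h u$ to reduce to weighted Lagrange-interpolation errors, and then apply \cref{FEM:lemma:est_interpolation_general_weight} with $\alpha=\pm 1/2$ to obtain
\[
\|\tilde\rho^{-1/2}(u-u_h)\|+\|\tilde\rho^{1/2}\nabla(u-u_h)\|\le c\bigl(h\,\|\tilde\rho^{1/2}\nabla^2 u\|+h^2\,\|\tilde\rho^{-1/2}\nabla^2 u\|\bigr).
\]
The weighted Hessian norms are then converted into $\|u\|_{H^2_{-s}}$-norms. The decisive identity is $\tilde\rho^{-1}=\tilde\rho^{-1+2s}\,\tilde\rho^{-2s}$, combined with $\tilde\rho\ge h$ (so $\tilde\rho^{-1+2s}\le h^{-1+2s}$ since $s<\tfrac12$) and $\tilde\rho\ge\rho$ (so $\tilde\rho^{-2s}\le\rho^{-2s}$), yielding
\[
\|\tilde\rho^{-1/2}\nabla^2 u\|^2\le h^{-1+2s}\!\int\rho^{-2s}|\nabla^2 u|^2\,dx\le h^{-1+2s}\|u\|^2_{H^2_{-s}},
\]
so that $h^2\|\tilde\rho^{-1/2}\nabla^2 u\|\le ch^{3/2+s}\|u\|_{H^2_{-s}}$, which is the desired rate. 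For the companion term $h\|\tilde\rho^{1/2}\nabla^2 u\|$ one splits $\Omega=\{\rho<h\}\cup\{\rho\ge h\}$ and uses on the boundary layer the improved local estimate $\int_{\rho<h}|\nabla^2 u|^2\le h^{2s}\|u\|^2_{H^2_{-s}}$ (which follows from $\rho^{-s}\ge h^{-s}$ there), while on $\{\rho\ge h\}$ the factor $\rho^{1+2s}$ is uniformly bounded by a constant depending only on $\Omega$.

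\paragraph{Main obstacle.} The subtle point is ensuring that both weighted terms on the right produce the same rate $h^{3/2+s}$. The $\tilde\rho^{-1/2}$-contribution cleanly benefits from the lower bound $\tilde\rho\ge h$, converting the admissible singularity of $\nabla^2 u$ into the desired $h^{s}$-gain. For the $\tilde\rho^{1/2}$-contribution a direct estimate would only give $c\|u\|_{H^2_{-s}}$ and hence the suboptimal rate $O(h)$; extracting the extra $h^{1/2+s}$ requires the near-boundary localization combined with the weighted interpolation estimate of \cref{FEM:lemma:est_interpolation_general_weight}, ultimately identifying that the relevant contribution lives on $\Omega_h$, where both $\tilde\rho\sim h$ and $\rho^{-s}\gtrsim h^{-s}$ are active. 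Putting these pieces together is where the main technical work concentrates; once done, the two contributions combine to yield $\|u-u_h\|_{L^2(\partial\Omega)}\le ch^{3/2+s}\|u\|_{H^2_{-s}(\Omega)}$, as claimed.
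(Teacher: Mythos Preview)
Your argument has a genuine gap at the ``main obstacle'' you yourself flag. After applying \cref{FEM:Neumann:weighted_best_approximation} with $v_h=i_hu$ and \cref{FEM:lemma:est_interpolation_general_weight}, you are left with the term $h\,\ltwonorm{\tilde\rho^{1/2}\nabla^2 u}$. Your splitting into $\{\rho<h\}$ and $\{\rho\ge h\}$ handles the boundary strip correctly, but on the interior part it gives only
\[
h^2\int_{\{\rho\ge h\}}\tilde\rho\,|\nabla^2 u|^2\,dx\le ch^2\int_\Omega\rho^{1+2s}\rho^{-2s}|\nabla^2 u|^2\,dx\le ch^2\norm{u}_{H^2_{-s}(\Omega)}^2,
\]
i.e.\ a contribution of order $h$, not $h^{3/2+s}$. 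The assertion that ``the relevant contribution lives on $\Omega_h$'' is not justified and is in fact false: the interior term dominates. More fundamentally, once you globalize the trace estimate to $\ltwonorm{\tilde\rho^{-1/2}(u-u_h)}+\ltwonorm{\tilde\rho^{1/2}\nabla(u-u_h)}$, you are bounding the boundary error by a quantity that is genuinely only of order $h$ (the weighted $H^1$-error is $\sim h$ even for smooth $u$), so no refinement of the subsequent steps can recover $h^{3/2+s}$.

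The paper proceeds differently, by a boundary duality argument. One introduces the dual solution $z\in H^1(\Omega)$ with Neumann datum $u-u_h$ on $\partial\Omega$ and its Ritz projection $z_h$; two applications of Galerkin orthogonality give
\[
\ltwonormdo{u-u_h}^2=(\nabla(u-i_hu),\nabla(z-z_h))+(u-i_hu,z-z_h).
\]
Inserting the weights as $\tilde\rho^{-1/2}\cdot\tilde\rho^{1/2}$ puts the \emph{negative} power on \emph{both} $u$-factors, so \cref{FEM:lemma:est_interpolation_general_weight} with $\alpha=-\tfrac12$ yields $\ltwonorm{\tilde\rho^{-1/2}\nabla(u-i_hu)}+\ltwonorm{\tilde\rho^{-1/2}(u-i_hu)}\le ch\,\ltwonorm{\tilde\rho^{-1/2}\nabla^2 u}\le ch^{1/2+s}\norm{u}_{H^2_{-s}}$, exactly as in your clean computation for the second term. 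The remaining $z$-factors $\ltwonorm{\tilde\rho^{1/2}\nabla(z-z_h)}+\ltwonorm{\tilde\rho^{-1/2}(z-z_h)}$ are where \cref{FEM:Neumann:weighted_best_approximation} is used, together with the regularity $z\in H^{3/2}(\Omega)$, $\rho^{1/2}\nabla^2 z\in L^2(\Omega)$ from \cref{theorem:regularity_state_adjoint}; this contributes $ch\,\ltwonormdo{u-u_h}$. Multiplying the two factors and dividing yields $h^{3/2+s}$. The point you are missing is this extra power of $h$ coming from duality; the weighted best-approximation result is applied to the dual problem, not to $u-u_h$ itself.
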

\begin{proof}
	 We apply a duality argument. Let $z\in H^1(\Omega)$ be the solution of
	\[
	(\nabla \phi,\nabla z) + (\phi,z) = (u-u_h,\phi)_{\partial \Omega} \quad \forall \phi \in H^1(\Omega).
	\]
	Moreover, let $z_h = R_h z$ be the corresponding discrete solution, i.e.
	\[
	z_h \in \widehat V_h \quad:\quad (\nabla \phi_h,\nabla z_h) + (\phi_h,z_h) = (u-u_h,\phi_h)_{\partial \Omega} \quad \forall \phi_h \in \widehat V_h.
	\]
	We obtain
	\[
	\begin{aligned}
		\ltwonormdo{u-u_h}^2 &= (\nabla (u-u_h),\nabla z) + (u-u_h,z)
		=(\nabla (u-i_h u),\nabla (z-z_h)) + (u-i_h u,z-z_h),\\
	\end{aligned}
	\]
	where we used Galerkin orthogonality for both $u-u_h$ and $z-z_h$. We insert positive and negative powers of $\tilde \rho$, use the Cauchy-Schwarz inequality and the boundedness of $\tilde \rho$ in $\Omega$ such that
	\begin{multline}\label{FEM:eq:in_proof_Neumanna_imr_est_start}
		\ltwonormdo{u-u_h}^2 \le c\left(\ltwonorm{\tilde \rho^{-\half}\nabla (u-i_h u)}+\ltwonorm{\tilde \rho^{-\half}(u-i_h u)}\right) \\ \times \left( \ltwonorm{\tilde \rho^{\half}\nabla (z-z_h)}+\ltwonorm{\tilde\rho^{-\half}(z-z_h)}\right).
	\end{multline}
	To estimate the weighted interpolation error we use \cref{FEM:lemma:est_interpolation_general_weight} leading to
	\[
	\begin{aligned}
		&\ltwonorm{\tilde \rho^{-\half}\nabla (u-i_h u)} + \ltwonorm{\tilde \rho^{-\half}(u-i_h u)} \le c h \ltwonorm{\tilde \rho^{-\half} \nabla^2 u} \\
		&\quad= c h \ltwonorm{\tilde \rho^{s-\half}\tilde \rho^{-s} \nabla^2 u}\le c h h^{s-\half} \ltwonorm{\tilde \rho^{-s} \nabla^2 u} \le c h^{\half + s} \ltwonorm{\rho^{-s} \nabla^2 u},
	\end{aligned}
	\]
	where we used $\tilde \rho \ge h$ and therefore $\tilde \rho^{s-\half} \le c h^{s-\half}$ (since $s<\frac12$) as well as $\tilde \rho \ge \rho$ and therefore $\tilde \rho^{-s}\le \rho^{-s}$ (since $s>0$).
	The weighted error of $z$ is estimated using \cref{FEM:Neumann:weighted_best_approximation} with the Scott-Zhang interpolation $v_h = \tilde i_h z$, see \cite{ScottZhang:1990}, as
	\begin{multline}\label{FEM:eq:in_proof_impr_est_Neumann:weighted_z}
		\ltwonorm{\tilde \rho^{-\half}(z-z_h)}+\ltwonorm{\tilde \rho^{\half}\nabla (z-z_h)}\\ \le c \ltwonorm{\tilde \rho^{\half}\nabla (z-\tilde i_h z)} + c\ltwonorm{\tilde \rho^{-\half}(z-\tilde i_h z)}.
	\end{multline}
	To estimate the above weighted interpolation errors we introduce subdomains (strips)
	\[
	B_h = \Set{x \in \Omega | \rho(x) \le h}\quad\text{and}\quad B_{bh} = \Set{x \in \Omega | \rho(x) \le b h},
	\]
	where $b>1$ is chosen large enough, such that for every cell $K$ with $K \cap (\Omega \setminus B_{bh}) \neq \emptyset$ with the corresponding patch $\omega_K$, there holds
	\[
	\dist(\omega_K,\partial \Omega) \ge h.
	\]
	We obtain
	\[
	\ltwonorm{\tilde \rho^{\half}\nabla (z-\tilde i_h z)}^2 = \norm{\tilde \rho^{\half}\nabla (z-\tilde i_h z)}_{L^2(B_{bh})}^2 +\norm{\tilde \rho^{\half}\nabla (z-\tilde i_h z)}_{L^2(\Omega\setminus B_{bh})}^2.
	\]
	For the error on the strip $B_{bh}$ we use $\tilde \rho \le c h$ and obtain
	\[
	\begin{aligned}
		\norm{\tilde \rho^{\half}\nabla (z-\tilde i_h z)}_{L^2(B_{bh})}
		&\le c h^\half \norm{\nabla (z-\tilde i_h z)}_{L^2(B_{bh})}
		 \le c h^\half \ltwonorm{\nabla (z-\tilde i_h z)}\\
		&\le c h \norm{z}_{H^{\nicefrac{3}{2}}(\Omega)}
		 \le c h \ltwonormdo{u-u_h},
	\end{aligned}
	\]
	where we used a fractional order interpolation error estimate and the $H^{\nicefrac{3}{2}}(\Omega)$ regularity for $z$ from \cref{theorem:regularity_state_adjoint}. For the error on the complement of $B_{bh}$ we can use weighted  interpolation estimates for the Scott-Zhang interpolation, similar to \cref{FEM:lemma:est_interpolation_general_weight}, and obtain
	\[
	\begin{aligned}
		\norm{\tilde \rho^{\half}\nabla (z-\tilde i_h z)}_{L^2(\Omega\setminus B_{bh})}
		&\le c h \norm{\tilde \rho^{\half} \nabla^2 z}_{L^2(\Omega \setminus B_h)}
		 \le c h \norm{\rho^{\half} \nabla^2 z}_{L^2(\Omega \setminus B_h)}\\
		&\le c h \ltwonorm{\rho^{\half} \nabla^2 z}
		 \le c h \ltwonormdo{u-u_h},
	\end{aligned}
	\]
	where we used $\rho(x) \ge ch$ on $\Omega \setminus B_h$ and therefore $\tilde \rho(x) \le c\rho(x)$. Moreover, we used the weighted regularity from \cref{theorem:regularity_state_adjoint}. The second term on the right-hand side of \cref{FEM:eq:in_proof_impr_est_Neumann:weighted_z} is simply estimated using  $\tilde \rho \ge c h$ as
	\[
	\begin{aligned}
		\ltwonorm{\tilde \rho^{-\half}(z-\tilde i_h z)} &\le ch^{-\half} \ltwonorm{z-\tilde i_h z}\le c h\norm{z}_{H^{\nicefrac{3}{2}}(\Omega)}\le c h \ltwonormdo{u-u_h},
	\end{aligned}
	\]
	where we again used a fractional order interpolation error estimate and the $H^{\nicefrac{3}{2}}(\Omega)$ regularity for $z$ from \cref{theorem:regularity_state_adjoint}.
	Putting terms together, inserting the weighted interpolation error estimates for $z$ into \cref{FEM:eq:in_proof_impr_est_Neumann:weighted_z} and then the result and the weighted interpolation error estimates for $u$ into \cref{FEM:eq:in_proof_Neumanna_imr_est_start} yields the assertion after dividing by $\ltwonormdo{u-u_h}$.
\end{proof}

\begin{corollary}\label{FEM:Corollary:u-uhboundary1}
	Let $0<s<s_\Omega$, where $s_\Omega$ is defined by \eqref{eq:s_Omega}. Let $g=0$, $f \in H^1(\Omega)$ and $u$ be the solution of \eqref{eq:z_rhs_f}. Let $u_h = R_h u$ be its Neumann Ritz projection. Then there holds
	\[
	\ltwonormdo{u-u_h} \le \frac{c}{1-2s} h^{\frac{3}{2}+s} \norm{f}_{H^1(\Omega)}
	\]
	with a positive constant $c$ only depending on $\Omega$.
\end{corollary}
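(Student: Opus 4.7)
The plan is to prove this corollary as a direct combination of two previously established results: the regularity result \cref{theorem:weighted_H2reg} and the finite element error estimate on the boundary \cref{FEM:Theorem:u-uhboundary}.

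First I would invoke \cref{theorem:weighted_H2reg}, which applies precisely under the hypotheses of the corollary (namely $g \equiv 0$, $f \in H^1(\Omega)$, and $0 < s < s_\Omega$). This yields $u \in H^2_{-s}(\Omega)$ together with the a priori estimate
\[
\norm{u}_{H^2_{-s}(\Omega)} \le \frac{c}{1-2s} \norm{f}_{H^1(\Omega)}.
\]
Since $s < s_\Omega \le \tfrac{1}{2}$, the hypothesis $0 < s < \tfrac{1}{2}$ required by \cref{FEM:Theorem:u-uhboundary} is also met, so I would apply that theorem to obtain
\[
\ltwonormdo{u-u_h} \le c h^{\frac{3}{2}+s} \norm{u}_{H^2_{-s}(\Omega)}.
\]
Chaining these two estimates gives the claim, with the factor $\tfrac{1}{1-2s}$ arising solely from the regularity bound.

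There is no real obstacle here since all the substantive work has been done earlier: the delicate weighted regularity theory in \cref{theorem:weighted_H2reg} and the duality-plus-weighted-best-approximation argument underlying \cref{FEM:Theorem:u-uhboundary}. The only point worth checking is that the range of admissible $s$ is consistent between the two ingredients, and this is automatic from the definition of $s_\Omega$ in \cref{eq:s_Omega}.
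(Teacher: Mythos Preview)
Your proposal is correct and matches the paper's own proof exactly: the paper simply states that the corollary is a direct consequence of \cref{FEM:Theorem:u-uhboundary} and \cref{theorem:weighted_H2reg}, which is precisely the two-step chaining you describe.
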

\begin{proof}
	This is a direct consequence of \cref{FEM:Theorem:u-uhboundary} and \cref{theorem:weighted_H2reg}.
\end{proof}

\begin{corollary}\label{FEM:Corollary:u-uhboundary2}
	Let the critical exponent $\lambda_\Omega$ \cref{eq:lambda_Omega} fulfill $\lambda_\Omega >\frac{3}{2}$. Let $f \in H^1(\Omega)$, $g=0$ and $u$ be the solution of \eqref{eq:z_rhs_f}. Let $u_h = R_h u$ be its Neumann Ritz projection. Then there holds
	\[
	\ltwonormdo{u-u_h} \le c h^2 \lh \norm{f}_{H^1(\Omega)}
	\]
	with a positive constant $c$ only depending on $\Omega$.
\end{corollary}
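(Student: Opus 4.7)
The plan is to deduce this corollary directly from \cref{FEM:Corollary:u-uhboundary1} by choosing the parameter $s$ there to depend optimally on $h$. Since the hypothesis $\lambda_\Omega > \tfrac{3}{2}$ gives $\lambda_\Omega - 1 > \tfrac{1}{2}$, the definition \cref{eq:s_Omega} yields $s_\Omega = \tfrac{1}{2}$, so that every $s \in (0,\tfrac{1}{2})$ is admissible in \cref{FEM:Corollary:u-uhboundary1}. The estimate from that corollary therefore reads
\[
\ltwonormdo{u-u_h} \le \frac{c}{1-2s}\, h^{\frac{3}{2}+s}\, \norm{f}_{H^1(\Omega)} \quad \text{for every } 0<s<\tfrac{1}{2}.
\]

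The idea is to balance the blow-up of the prefactor $1/(1-2s)$ as $s \to \tfrac{1}{2}$ against the gain in the exponent $\tfrac{3}{2}+s$. For $h$ sufficiently small (say $h < e^{-1}$), I would choose
\[
s = \frac{1}{2} - \frac{1}{\abs{\ln h}},
\]
which lies in $(0,\tfrac{1}{2})$ for small $h$ and gives $1 - 2s = 2/\abs{\ln h}$, hence $c/(1-2s) = \tfrac{c}{2}\abs{\ln h}$. For the exponent, I would compute
\[
h^{\frac{3}{2}+s} = h^{2 - 1/\abs{\ln h}} = h^2 \cdot h^{-1/\abs{\ln h}} = h^2 \cdot e^{-\ln h / \abs{\ln h}} = h^2 \cdot e,
\]
using $\ln h = -\abs{\ln h}$ for $h < 1$. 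Substituting both into the previous estimate then yields
\[
\ltwonormdo{u-u_h} \le \frac{c\, e}{2}\, h^2 \abs{\ln h}\, \norm{f}_{H^1(\Omega)},
\]
which is the claimed bound after absorbing the numerical constants into $c$.

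There is essentially no obstacle here; the only point requiring a tiny bit of care is the restriction $h < e^{-1}$ to ensure $s>0$ and to make the identity $h^{-1/\abs{\ln h}} = e$ valid. For larger $h$ one handles the finitely many remaining mesh sizes trivially by absorbing them into the constant, since $\abs{\ln h}$ and $h$ stay bounded away from their limiting values. No new regularity or approximation arguments are needed beyond those already in \cref{FEM:Corollary:u-uhboundary1}.
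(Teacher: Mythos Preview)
Your proposal is correct and follows essentially the same approach as the paper: both observe that $\lambda_\Omega>\tfrac{3}{2}$ forces $s_\Omega=\tfrac{1}{2}$, then apply \cref{FEM:Corollary:u-uhboundary1} with the choice $s=\tfrac{1}{2}-\abs{\ln h}^{-1}$ and compute $1-2s=2\abs{\ln h}^{-1}$ and $h^{-\abs{\ln h}^{-1}}=e$. Your additional remark about handling the case $h\ge e^{-1}$ by absorbing into the constant is a reasonable extra bit of care not made explicit in the paper.
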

\begin{proof}
 	If $\lambda_\Omega >\frac{3}{2}$, there holds $s_\Omega=\frac12$ such that we can choose $s=\frac12-\abs{\ln h}^{-1}$ in \cref{FEM:Corollary:u-uhboundary1} which yields the assertion by observing that
 	\[
 		1-2s=2\abs{\ln h}^{-1}\quad\text{and}\quad h^{-\abs{\ln h}^{-1}}=h^{(\ln h)^{-1}}=\mathrm{e}^{\ln h (\ln h)^{-1}}=\mathrm{e}.
 	\]
\end{proof}

\section{Discretization of the optimal control problem.}\label{Sec:5}
To discretize the optimal control problem \eqref{NeumannCon:eq:reduced_problem} we introduce the discrete solution operator $S_h \colon Q \to \widehat V_h$ mapping a given control $q \in Q$ to the solution $u_h = u_h(q)$ of \cref{FEM:eq:poisson_inhomogenNeumann} with $g = q$ and $f= 0$. Then, the discrete reduced cost functional is given as
\[
j_h \colon Q \to \R, \quad j_h(q) = J(q,S_h q).
\]
We discuss different discretization concepts for the control: cell-wise constant discretization, and cell-wise linear but globally continuous discretization. We also note that in case of no control constraints the latter discretization approach coincides with the concept of variational discretization.
In case of the cell-wise constant discretization we choose
\[
	Q_h=Q_h^0:=\Set{q\in Q | q|_F\in \mathcal{P}_0(F),\,\forall F\in\T_h^\partial}
\]
with $\mathcal{P}_0(F)$ denoting the spaces of constant functions on $F$. For the cell-wise linear but globally continuous discretization we set $Q_h=V_h^\partial$, see \cref{FEM:eq:V_h^partial} for the definition of $V_h^\partial$.
For any $q\in Q$, we also define the $L^2(\partial\Omega)$-projection $\pi_h\colon L^2(\partial\Omega)\to Q_h$ by
\[
	(\pi_h q - q,q_h)_{\partial\Omega}=0\quad \text{for all }q_h\in Q_h.
\]
Depending on the specific choice of the space $Q_h$, the considerations from above lead to the discrete problem:
\begin{equation}\label{discreteNeumannproblem}
	\text{Minimize } j_h(q_h),\quad q_h\in Q_h.
\end{equation}
Completely analogously to the continuous problem one can show that the functional $j_h$ is continuous and strictly convex. As a consequence, the existence and uniqueness of a discrete optimal control $\bar q_h\in Q_h$ follows by standard arguments for each of the discretization concepts. 
The functional $j_h$ is two times Fr\'echet differentiable. For a control $q \in Q$ and a direction $\dq \in Q$ the directional derivatives are
given by
\[
j_h'(q)(\dq) = (u_h-u_d,\du_h) + \alpha(q,\dq)_{\partial\Om}
\]
and
\[
	j_h''(q)(\dq,\dq) = \ltwonorm{\du_h}^2 + \alpha \ltwonormdo{\dq}^2,
\]
where $u_h = S_hq$ and $\du_h = S_h\dq$. In order to get an adjoint representation of $j_h'(q)(\dq)$ we introduce a discrete adjoint state $z_h$. As before, let $q\in Q$ and $u_h=S_hq$. We define the discrete adjoint state $z_h$ by
\begin{equation}\label{eq:discreteadjoint}
	z_h \in \widehat V_h \quad:\quad (\nabla \phi_h,\nabla z_h) + (\phi_h,z_h) = (u_h-u_d,\phi_h) \quad \text{for all } \phi_h \in \widehat V_h.
\end{equation}
As a consequence, we can reformulate the directional derivative of $j_h$ by
\[
	j_h'(q)(\dq)=(\alpha q + z_h|_{\partial \Omega},\dq)_{\partial \Omega}.
\]
The optimal control $\bar q_h\in Q_h$ is characterized by 
\begin{equation}\label{eq:vargradienteq}
	\bar q_h\in Q_h\quad:\quad j_h'(\bar q_h)(\delta q_h)=0\quad \text{for all }\delta q_h\in Q_h.
\end{equation}
Let us denote by $\bar z_h\in \widehat V_h$ the solution to \cref{eq:discreteadjoint} with right hand side $\bar u_h-u_d$, where $\bar u_h=S_h\bar q_h$ is the optimal discrete sate.
As a direct consequence of \cref{eq:vargradienteq}, we get the gradient equation
\begin{equation}\label{eq:gradienteq}
	\alpha \bar q_h + \pi_h\bar z_h|_{\partial\Omega}=0.
\end{equation}
If $Q_h=V_h^\partial$, there further holds $\pi_h \bar z_h|_{\partial\Omega} =\bar z_h|_{\partial\Omega}$ due to the definition of $\pi_h$ and the fact that $\bar z_h|_{\partial\Omega}\in V_h^\partial$.

When deriving corresponding discretization error estimates, we need several estimates for the directional derivatives.
\begin{lemma}\label{lemma:jhl2projection}
	Let $\bar q_h\in Q_h$ be the solution of \cref{discreteNeumannproblem}. If $Q_h=Q_h^0$, then for all $\delta q\in H^1(\partial\Omega)$ there holds
	\[
	\abs{j_h'(\bar q_h)(\pi_h\delta q-\delta q)}\le
		c h^2 \norm{u_d}_{L^2(\Omega)}\norm{\delta q}_{H^{1}(\partial\Omega)}
	\]
	with a constant $c>0$ only depending on $\Omega$.
	If $Q_h=V_h^\partial$, then for all $\delta q\in L^2(\partial\Omega)$ there holds
	\[
	j_h'(\bar q_h)(\delta q)=0.
	\]
\end{lemma}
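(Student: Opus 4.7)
The second assertion is immediate and I would dispatch it first. Since $\bar z_h\in\widehat V_h$, its trace $\bar z_h|_{\partial\Omega}$ lies in $V_h^\partial=Q_h$, so $\pi_h\bar z_h|_{\partial\Omega}=\bar z_h|_{\partial\Omega}$ and the gradient equation \cref{eq:gradienteq} reads $\alpha\bar q_h+\bar z_h|_{\partial\Omega}=0$ pointwise on $\partial\Omega$. Plugging this into the adjoint representation $j_h'(\bar q_h)(\delta q)=(\alpha\bar q_h+\bar z_h|_{\partial\Omega},\delta q)_{\partial\Omega}$ yields $j_h'(\bar q_h)(\delta q)=0$ for every $\delta q\in L^2(\partial\Omega)$.

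For the first assertion, let $\zeta=\pi_h\delta q-\delta q$. The key initial simplification is that $\zeta$ is $L^2(\partial\Omega)$-orthogonal to $Q_h=Q_h^0$ (by definition of $\pi_h$) and $\bar q_h\in Q_h^0$, so $(\alpha\bar q_h,\zeta)_{\partial\Omega}=0$ and therefore $j_h'(\bar q_h)(\zeta)=(\bar z_h|_{\partial\Omega},\zeta)_{\partial\Omega}$. Rather than estimating $(\bar z_h|_{\partial\Omega},\zeta)_{\partial\Omega}$ directly (which, via the natural splitting $\bar z_h=\bar z+(\bar z_h-\bar z)$, leads to a circular dependence on the yet-to-be-proven bound $\|\bar u-\bar u_h\|_{L^2(\Omega)}\le ch\|u_d\|_{L^2(\Omega)}$), I would introduce the auxiliary discrete function $\eta_h\in\widehat V_h$ solving
\[
(\nabla\eta_h,\nabla\phi_h)+(\eta_h,\phi_h)=(\zeta,\phi_h)_{\partial\Omega}\quad\text{for all }\phi_h\in\widehat V_h.
\]
Testing with $\phi_h=\bar z_h$ and separately testing the discrete adjoint equation \cref{eq:discreteadjoint} for $\bar z_h$ with $\phi_h=\eta_h$ yields the identity $(\bar z_h|_{\partial\Omega},\zeta)_{\partial\Omega}=(\bar u_h-u_d,\eta_h)$. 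By Cauchy--Schwarz, it then remains to bound $\|\bar u_h-u_d\|_{L^2(\Omega)}$ and $\|\eta_h\|_{L^2(\Omega)}$.

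The first factor is harmless: from $j_h(\bar q_h)\le j_h(0)$ one gets $\|\bar q_h\|_{L^2(\partial\Omega)}\le c\|u_d\|_{L^2(\Omega)}$, hence $\|\bar u_h\|_{L^2(\Omega)}\le c\|\bar q_h\|_{L^2(\partial\Omega)}\le c\|u_d\|_{L^2(\Omega)}$ and so $\|\bar u_h-u_d\|_{L^2(\Omega)}\le c\|u_d\|_{L^2(\Omega)}$. The crucial factor is $\|\eta_h\|_{L^2(\Omega)}$, for which I would invoke \cref{theorem:L2stability} to get
\[
\|\eta_h\|_{L^2(\Omega)}\le c\bigl(h\|\zeta\|_{H^{-\nicefrac12}(\partial\Omega)}+\|\zeta\|_{H^{-1}(\partial\Omega)}\bigr).
\]
Both negative-norm estimates for $\zeta$ come from the face-wise Poincar\'e estimate $\|(\operatorname{Id}-\pi_h)v\|_{L^2(F)}\le ch_F|v|_{H^1(F)}$, combined with summation: namely $\|\zeta\|_{H^{-\nicefrac12}(\partial\Omega)}\le\|\zeta\|_{L^2(\partial\Omega)}\le ch\|\delta q\|_{H^1(\partial\Omega)}$, and, using $L^2$-orthogonality of $\pi_h$ once more,
\[
(\zeta,g)_{\partial\Omega}=\bigl((\operatorname{Id}-\pi_h)\delta q,(\operatorname{Id}-\pi_h)g\bigr)_{\partial\Omega}\le ch^2\|\delta q\|_{H^1(\partial\Omega)}\|g\|_{H^1(\partial\Omega)}
\]
for all $g\in H^1(\partial\Omega)$, giving $\|\zeta\|_{H^{-1}(\partial\Omega)}\le ch^2\|\delta q\|_{H^1(\partial\Omega)}$. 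Plugging these into the bound for $\|\eta_h\|_{L^2(\Omega)}$ yields $\|\eta_h\|_{L^2(\Omega)}\le ch^2\|\delta q\|_{H^1(\partial\Omega)}$, and combining with the bound on $\|\bar u_h-u_d\|_{L^2(\Omega)}$ produces exactly the claimed estimate. The one step that requires a bit of invention is precisely the introduction of $\eta_h$: without this trick one is tempted to bound $\|(\operatorname{Id}-\pi_h)\bar z_h|_{\partial\Omega}\|_{L^2(\partial\Omega)}$ via the finite element error for $\bar z_h$, which is circular, and the duality with \cref{theorem:L2stability} is what cleanly bypasses that obstruction.
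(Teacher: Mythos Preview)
Your argument is correct and, once unpacked, coincides with the paper's proof. Your auxiliary function $\eta_h$ is nothing other than $S_h\zeta=S_h(\pi_h\delta q-\delta q)$, and the identity $(\bar z_h|_{\partial\Omega},\zeta)_{\partial\Omega}=(\bar u_h-u_d,\eta_h)$ that you obtain by testing twice is precisely the equivalence between the adjoint and primal representations of $j_h'$. The paper simply starts from the primal form
\[
j_h'(\bar q_h)(\pi_h\delta q-\delta q)=(S_h\bar q_h-u_d,S_h(\pi_h\delta q-\delta q))+\alpha(\bar q_h,\pi_h\delta q-\delta q)_{\partial\Omega},
\]
notes that the second term vanishes by orthogonality, and then applies \cref{theorem:L2stability} to $S_h(\pi_h\delta q-\delta q)$ exactly as you do to $\eta_h$. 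So the ``invention'' you flag is not needed: using the primal form of $j_h'$ from the outset avoids the detour through the adjoint and the introduction of $\eta_h$. The remaining estimates (optimality bound $\|S_h\bar q_h-u_d\|_{L^2(\Omega)}\le\|u_d\|_{L^2(\Omega)}$ and the negative-norm bounds for $\pi_h\delta q-\delta q$) are identical in both arguments.
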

\begin{proof}
	We start with considering the case $Q_h=Q_h^0$. We first use the definition of $\pi_h$ and the fact that $\bar q_h\in Q_h$. This yields
	\begin{align*}
		\abs{j_h'(\bar q_h)(\pi_h\delta q-\delta q)}&=\abs{ (S_h\bar q_h-u_d,S_h(\pi_h \delta q- \delta q))+\alpha(\bar q_h,\pi_h\delta q- \delta q)_{\partial\Omega}}\\
		&=\abs{(S_h\bar q_h-u_d,S_h(\pi_h \delta q- \delta q))}\\
		&\le \norm{S_h\bar q_h-u_d}_{L^2(\Omega)}\norm{S_h(\pi_h \delta q- \delta q)}_{L^2(\Omega)}.
	\end{align*}
	Due to the optimality of $\bar q_h$, we deduce
	\[
	\norm{S_h\bar q_h-u_d}_{L^2(\Omega)}^2\le 2j_h(\bar q_h)\le 2j_h(0)=\norm{u_d}_{L^2(\Omega)}^2.
	\]
	From \cref{theorem:L2stability} and a standard estimate for $\pi_h$ we obtain
	\begin{align*}
		\norm{S_h(\pi_h \delta q- \delta q)}_{L^2(\Omega)}
		&\le c(h \norm{\delta q-\pi_h \delta q}_{H^{-\nicefrac12}(\partial\Omega)}+\norm{\delta q-\pi_h \delta q}_{H^{-1}(\partial\Omega)})\\
		&\le c h^2 \norm{\delta q}_{H^{1}(\partial\Omega)}.
	\end{align*}
	Collecting the different estimates shows the assertion for $Q_h=Q_h^0$.
	Next, we consider the case $Q_h=V_h^\partial$. As $\alpha \bar q_h+\bar z_h|_{\partial\Omega}=0$, we get for all $\delta q\in L^2(\partial\Omega)$
	\[
		j_h'(\bar q_h)(\delta q)=0.
	\]
\end{proof}

We next show estimates for the finite element error of the directional derivatives under minimal regularity assumptions for the desired state $u_d$, i.e., $u_d\in L^2(\Omega)$. Afterwards, we show that the first estimate can be improved for $u_d\in H^1(\Omega)$.

\begin{lemma}\label{lemma:fem_derivatives2}
	Let $u_d\in L^2(\Omega)$ and let $\bar q$ solve \cref{NeumannCon:eq:reduced_problem}.
	There holds
	\[
	\abs{j'(\bar q)(\delta q)-j_h'(\bar q)(\delta q)}\le \begin{cases} c h^{\frac32}\norm{u_d}_{L^2(\Omega)} \norm{\delta q}_{L^2(\partial\Omega)}&\text{for any } \delta q\in L^2(\partial\Omega),\\
		ch^{2}\norm{u_d}_{L^2(\Omega)} \norm{\delta q}_{H^{\nicefrac{1}{2}}(\partial\Omega)}&\text{for any }\delta q\in H^{\nicefrac{1}{2}}(\partial\Omega)
	\end{cases}
	\]
	with a constant $c>0$ only depending on $\Omega$.
\end{lemma}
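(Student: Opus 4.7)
The plan is to use the adjoint representation of the two derivatives. Set $u=S\bar q$, $u_h=S_h\bar q$, let $z\in H^2(\Omega)$ be the continuous adjoint to $u$ (the solution of the adjoint equation with right hand side $u-u_d$), and let $z_h\in\widehat V_h$ be the discrete adjoint from \cref{eq:discreteadjoint} corresponding to the same $\bar q$ (right hand side $u_h-u_d$). The adjoint representations of $j'(\bar q)$ and $j_h'(\bar q)$ then yield
\[
j'(\bar q)(\delta q)-j_h'(\bar q)(\delta q)=(z-z_h,\delta q)_{\partial\Omega}.
\]
I would insert the Neumann Ritz projection $\tilde z_h:=R_h z$ and split the right hand side into $I:=(z-\tilde z_h,\delta q)_{\partial\Omega}$ and $II:=(w,\delta q)_{\partial\Omega}$ with $w:=\tilde z_h-z_h\in\widehat V_h$.

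The term $I$ is precisely the boundary error of the Ritz projection for $z$, which solves \cref{eq:z_rhs_f} with $f=u-u_d\in L^2(\Omega)$ and $g\equiv 0$. Hence \cref{FEM:lemma:u-uhboundary_negative} provides both $\|z-\tilde z_h\|_{H^{-1/2}(\partial\Omega)}\le ch^2\|u-u_d\|_{L^2(\Omega)}$ and $\|z-\tilde z_h\|_{L^2(\partial\Omega)}\le ch^{3/2}\|u-u_d\|_{L^2(\Omega)}$. A comparison with the zero control (as already used in the proof of \cref{lemma:jhl2projection}) gives $\|u-u_d\|_{L^2(\Omega)}\le c\|u_d\|_{L^2(\Omega)}$, and dualizing against $\delta q$ in $L^2(\partial\Omega)$ respectively in $H^{\nicefrac{1}{2}}(\partial\Omega)$ yields the two desired bounds for $I$.

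The term $II$ is more delicate since $w$ is merely a discrete function without a priori smoothness. Subtracting the defining equations of $\tilde z_h$ and $z_h$ shows that $w$ satisfies $(\nabla\phi_h,\nabla w)+(\phi_h,w)=(u-u_h,\phi_h)$ for all $\phi_h\in\widehat V_h$. To control the boundary product I would set up a duality argument: introduce $\psi\in H^1(\Omega)$ as the weak solution of $-\Lap\psi+\psi=0$ in $\Omega$ with Neumann datum $\partial_n\psi=\delta q$ and let $\psi_h:=R_h\psi$. Testing the weak equation for $\psi$ with $w$, using the Galerkin orthogonality of $\psi-\psi_h$ against $w\in\widehat V_h$, and finally the equation for $w$, collapses the computation to
\[
(w,\delta q)_{\partial\Omega}=(u-u_h,\psi_h).
\]
From $\bar q\in H^1(\partial\Omega)$ (\cref{NeumannCon:theorem:opt_sys}) together with \cref{theorem:standard_error_est_H2} I obtain $\|u-u_h\|_{L^2(\Omega)}\le ch^{2}\|u_d\|_{L^2(\Omega)}$, whereas Cea's lemma provides $\|\psi_h\|_{L^2(\Omega)}\le c\|\psi\|_{H^1(\Omega)}\le c\|\delta q\|_{L^2(\partial\Omega)}$. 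Consequently $|II|\le ch^{2}\|u_d\|_{L^2(\Omega)}\|\delta q\|_{L^2(\partial\Omega)}$, which is subordinate to the $h^{3/2}$-bound in the first case and, via $\|\delta q\|_{L^2(\partial\Omega)}\le c\|\delta q\|_{H^{\nicefrac{1}{2}}(\partial\Omega)}$, also fits into the $h^2$-bound in the second case.

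The main technical step, and the one I expect to be the hurdle, is the reduction in $II$: without the duality reformulation through $\psi$ one cannot gain two powers of $h$, because the boundary trace of the finite element function $w$ does not admit a direct rate. The Galerkin orthogonality of $\psi-\psi_h$ against $w\in\widehat V_h$ is precisely the mechanism that eliminates the interpolation-error terms and lets the full $L^2(\Omega)$-rate of $u-u_h$ drive the final estimate; everything else then assembles from the estimates of \cref{Sec:4} and the regularity provided in \cref{Sec:3}.
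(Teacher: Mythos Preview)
Your argument is correct and follows the same decomposition as the paper: write $j'(\bar q)(\delta q)-j_h'(\bar q)(\delta q)=(\bar z-z_h,\delta q)_{\partial\Omega}$ and split via the Ritz projection $R_h\bar z$ of the continuous adjoint. Your term $I$ is handled exactly as in the paper, by \cref{FEM:lemma:u-uhboundary_negative} together with the comparison $\|\bar u-u_d\|_{L^2(\Omega)}\le\|u_d\|_{L^2(\Omega)}$.

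The only genuine difference is your treatment of $II$, and here you work harder than necessary. Your duality construction through $\psi$ and the collapse $(w,\delta q)_{\partial\Omega}=(u-u_h,\psi_h)$ is valid, but your claim that ``without the duality reformulation through $\psi$ one cannot gain two powers of $h$'' is not correct. The paper simply observes that $w=\tilde z_h-z_h\in\widehat V_h$ is the Galerkin solution with right-hand side $u-u_h$, so by the trace theorem and the discrete a~priori estimate
\[
\|w\|_{H^{-\nicefrac{1}{2}}(\partial\Omega)}\le c\|w\|_{L^2(\partial\Omega)}\le c\|w\|_{H^1(\Omega)}\le c\|u-u_h\|_{L^2(\Omega)}\le ch^2\|u_d\|_{L^2(\Omega)},
\]
the last step using \cref{theorem:standard_error_est_H2} and the regularity $\bar q\in H^1(\partial\Omega)$ from \cref{NeumannCon:theorem:opt_sys}. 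This three-line argument already delivers the full $h^2$ rate for $II$; your duality detour recovers the same bound but is not needed. Both routes are fine; the paper's is shorter.
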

\begin{proof}
	Let us define $\tilde z_h \in \widehat V_h$ as the solution of \eqref{eq:discreteadjoint} with right hand side $\tilde u_h-u_d$ where $\tilde u_h=S_h\bar q$. Due to the definition of $j'$ and $j_h'$ we get
	\begin{align*}
		j'(\bar q)(\delta q)-j_h'(\bar q)(\delta q)&=(\bar z - \tilde z_h,\delta q)_{\partial\Omega}\\
		&\le \begin{cases}
			\norm{\bar z - \tilde 	z_h}_{L^2(\partial\Omega)}\norm{\delta q}_{L^2(\partial\Omega)}& \text{if } \delta q\in L^2(\partial\Omega),\\
			\norm{\bar z - \tilde 	z_h}_{H^{-\nicefrac{1}{2}}(\partial\Omega)}\norm{\delta q}_{H^{\nicefrac{1}{2}}(\partial\Omega)} &\text{if }\delta q\in H^{\nicefrac{1}{2}}(\partial\Omega).
		\end{cases}
	\end{align*}
	Additionally, let us define $\hat z_h \in \widehat V_h$ as the solution of \eqref{eq:discreteadjoint} with right hand side $\bar u-u_d$. Introducing $\hat z_h$ as an intermediate function yields
	\begin{align*}
		\norm{\bar z - \tilde z_h}_{L^2(\partial\Omega)}&\le \norm{\bar z - \hat z_h}_{L^2(\partial\Omega)}+\norm{\hat z_h - \tilde 	z_h}_{L^2(\partial\Omega)}
		\intertext{and}
		\norm{\bar z - \tilde z_h}_{H^{-\nicefrac{1}{2}}(\partial\Omega)}&\le \norm{\bar z - \hat z_h}_{H^{-\nicefrac{1}{2}}(\partial\Omega)}+\norm{\hat z_h - \tilde 	z_h}_{H^{-\nicefrac{1}{2}}(\partial\Omega)},
	\end{align*}
	respectively.
	Each of the first terms on the right hand sides can estimated by \cref{FEM:lemma:u-uhboundary_negative} in combination with the regularity results of \cref{NeumannCon:theorem:opt_sys}. By this we obtain
	\[
	\norm{\bar z - \hat z_h}_{H^{-\nicefrac{1}{2}}(\partial\Omega)}+h^{\frac12}\norm{\bar z - \hat z_h}_{L^2(\partial\Omega)}\le c h^{2}\norm{u_d}_{L^2(\Omega)}.
	\]
	Let us consider the second terms. Using the embedding $L^2(\partial\Omega)\hookrightarrow H^{-\nicefrac{1}{2}}(\partial\Omega)$, a standard trace theorem and a standard a priori estimate, we get 
	\[
	\norm{\hat z_h - \tilde z_h}_{H^{-\nicefrac{1}{2}}(\partial\Omega)}\le c\norm{\hat z_h - \tilde z_h}_{L^2(\partial\Omega)}\le c \norm{\hat z_h - \tilde z_h}_{H^1(\Omega)}\le c \norm{\bar u - \tilde u_h}_{L^2(\Omega)}.
	\]
	The latter term represents a finite element error in the domain. Using \cref{theorem:standard_error_est_H2} and the a priori estimate from \cref{NeumannCon:theorem:opt_sys}, we obtain
	\[
		\norm{\bar u - \tilde u_h}_{L^2(\Omega)}\le c h^2 \norm{u_d}_{L^2(\Omega)}.
	\]
	Hence, we get
	\begin{equation}\label{eq:inter1}
		\norm{\hat z_h - \tilde z_h}_{H^{-\nicefrac{1}{2}}(\partial\Omega)} + \norm{\hat z_h - \tilde z_h}_{L^2(\partial\Omega)}\le c h^2 \norm{u_d}_{L^2(\Omega)}.
	\end{equation}
	Collecting the different results yields the assertion.
\end{proof}

\begin{lemma}\label{lemma:fem_derivatives}
	Let $u_d\in H^1(\Omega)$ and let $\bar q$ solve \cref{NeumannCon:eq:reduced_problem}. For $0<s<s_\Omega$ with $s_\Omega$ from \cref{eq:s_Omega} there holds for all $\delta q\in L^2(\partial\Omega)$
	\[
		\abs{j'(\bar q)(\delta q)-j_h'(\bar q)(\delta q)}\le c h^{\frac32+s}\norm{u_d}_{H^1(\Omega)} \norm{\delta q}_{L^2(\partial\Omega)}
	\]
	with a constant $c>0$ only depending on $\Omega$.
	For $\lambda_\Omega>\frac32$ with $\lambda_\Omega$ from \cref{eq:lambda_Omega} there even holds
	\[
		\abs{j'(\bar q)(\delta q)-j_h'(\bar q)(\delta q)}\le c h^{2}\abs{\ln h}\norm{u_d}_{H^1(\Omega)} \norm{\delta q}_{L^2(\partial\Omega)}
	\]
	with a constant $c>0$ only depending on $\Omega$.
\end{lemma}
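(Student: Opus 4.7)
The plan is to mirror the proof of \cref{lemma:fem_derivatives2} but replace the use of \cref{FEM:lemma:u-uhboundary_negative} (which only delivers the rate $h^{3/2}$ on the boundary under $H^2$ regularity) by the sharper boundary estimates from \cref{FEM:Corollary:u-uhboundary1} and \cref{FEM:Corollary:u-uhboundary2}, which are applicable since the increased regularity $u_d\in H^1(\Omega)$ puts the adjoint right-hand side into $H^1(\Omega)$.

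First, let $\tilde z_h\in\widehat V_h$ denote the solution of \cref{eq:discreteadjoint} with right-hand side $\tilde u_h - u_d$, where $\tilde u_h = S_h\bar q$, and let $\hat z_h\in\widehat V_h$ be the solution of \cref{eq:discreteadjoint} with right-hand side $\bar u - u_d$. Exactly as in the proof of \cref{lemma:fem_derivatives2}, the identity
\[
j'(\bar q)(\delta q) - j_h'(\bar q)(\delta q) = (\bar z - \tilde z_h,\delta q)_{\partial\Omega}
\]
together with Cauchy--Schwarz on $\partial\Omega$ and the decomposition $\bar z - \tilde z_h = (\bar z - \hat z_h) + (\hat z_h - \tilde z_h)$ reduces the claim to bounding these two differences in $L^2(\partial\Omega)$.

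For the first difference, $\hat z_h = R_h \bar z$ is precisely the Neumann Ritz projection of $\bar z$, which solves the adjoint equation with right-hand side $\bar u - u_d \in H^1(\Omega)$ and homogeneous Neumann data. The regularity estimate from \cref{NeumannCon:theorem:opt_sys} yields $\|\bar u\|_{H^1(\Omega)} \le c\|u_d\|_{L^2(\Omega)} \le c\|u_d\|_{H^1(\Omega)}$, so $\|\bar u - u_d\|_{H^1(\Omega)}\le c\|u_d\|_{H^1(\Omega)}$. Applying \cref{FEM:Corollary:u-uhboundary1} with the parameter $s\in(0,s_\Omega)$ gives
\[
\|\bar z - \hat z_h\|_{L^2(\partial\Omega)} \le c h^{\frac32+s}\|u_d\|_{H^1(\Omega)},
\]
while in the case $\lambda_\Omega > \frac32$ the corresponding bound from \cref{FEM:Corollary:u-uhboundary2} provides $c h^2\abs{\ln h}\|u_d\|_{H^1(\Omega)}$.

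For the second difference, the analysis is identical to the one already carried out in \cref{lemma:fem_derivatives2}: by the trace theorem and standard $H^1$-stability for the discrete adjoint problem, together with the $L^2(\Omega)$-estimate \cref{theorem:standard_error_est_H2} applied to $\bar u - \tilde u_h$ (controlled by $h^2\|u_d\|_{L^2(\Omega)}$), we obtain
\[
\|\hat z_h - \tilde z_h\|_{L^2(\partial\Omega)} \le c\|\bar u - \tilde u_h\|_{L^2(\Omega)} \le c h^2\|u_d\|_{L^2(\Omega)} \le c h^2\|u_d\|_{H^1(\Omega)},
\]
which is of higher (or equal) order than the first bound and hence absorbed into the final estimate. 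I do not expect a genuine obstacle here: the main work has been front-loaded into \cref{FEM:Theorem:u-uhboundary} and its corollaries, and the present lemma is essentially an adjoint-based bookkeeping step that feeds those estimates into the optimality system. The only subtlety is noticing that $u_d\in H^1(\Omega)$ is exactly the minimal regularity needed to bring the adjoint source into the space required by \cref{FEM:Corollary:u-uhboundary1,FEM:Corollary:u-uhboundary2}.
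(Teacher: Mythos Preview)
Your proposal is correct and follows essentially the same approach as the paper: the same splitting $\bar z-\tilde z_h=(\bar z-\hat z_h)+(\hat z_h-\tilde z_h)$, the same use of \cref{FEM:Corollary:u-uhboundary1,FEM:Corollary:u-uhboundary2} for the first piece (enabled by $\bar u-u_d\in H^1(\Omega)$ via \cref{NeumannCon:theorem:opt_sys}), and the same recycling of \cref{eq:inter1} from \cref{lemma:fem_derivatives2} for the second piece.
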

\begin{proof}
	As in the proof of the previous lemma, we obtain
	\[
		\abs{j'(\bar q)(\delta q)-j_h'(\bar q)(\delta q)}\le \left(\norm{\bar z - \hat z_h}_{L^2(\partial\Omega)}+\norm{\hat z_h - \tilde z_h}_{L^2(\partial\Omega)}\right)\norm{\delta q}_{L^2(\partial\Omega)}.
	\]
	The second term of the sum was estimated in \cref{eq:inter1} yielding
	\[
		\norm{\hat z_h - \tilde z_h}_{L^2(\partial\Omega)}\le c h^2 \norm{u_d}_{L^2(\Omega)}\le c h^2 \norm{u_d}_{H^1(\Omega)}.
	\]
	The first term was treated in \cref{FEM:Corollary:u-uhboundary1} and \cref{FEM:Corollary:u-uhboundary2} for the case that $u_d$ belongs to $H^1(\Omega)$. In summary, we get for $0<s<s_\Omega$
	\[
	\norm{\bar z - \hat z_h}_{L^2(\partial\Omega)}\le ch^{\frac32+s}\norm{\bar u-u_d}_{H^1(\Omega)}\le c h^{\frac32+s}\norm{u_d}_{H^1(\Omega)},
	\]
	if we apply \cref{FEM:Corollary:u-uhboundary1}. Note, that last step holds due to \cref{NeumannCon:theorem:opt_sys}. If we use \cref{FEM:Corollary:u-uhboundary2} instead, we obtain assuming $\lambda_\Omega>\frac32$
	\[
	\norm{\bar z - \hat z_h}_{L^2(\partial\Omega)}\le  c h^{2}\abs{\ln h}\norm{u_d}_{H^1(\Omega)}.
	\]
	Putting terms together completes the proof.
\end{proof}

Next we prove optimal convergence rates for the controls. We distinguish between more and less regular desired states.
\begin{theorem}\label{theorem:main1}
	Let $\bar q$ and $\bar q_h$ be the solution of \cref{NeumannCon:eq:reduced_problem} and \cref{discreteNeumannproblem}, respectively.
	\begin{enumerate}
		\item For $u_d\in L^2(\Omega)$ there holds
			\[
				\norm{\bar q-\bar q_h}_{L^2(\partial\Omega)}\le
				\begin{cases}
					ch\norm{u_d}_{L^2(\Omega)} &\text{if } Q_h=Q_h^0,\\
					ch^{\frac32}\norm{u_d}_{L^2(\Omega)} &\text{if } Q_h=V_h^\partial,
				\end{cases}
			\]
			with a constant $c>0$ only depending on $\Omega$.
		\item Let $u_d\in H^1(\Omega)$ and $Q_h=V_h^\partial$. There holds with a constant $c>0$ only depending on $\Omega$ that
		\[
		\norm{\bar q-\bar q_h}_{L^2(\partial\Omega)}\le
		\begin{cases}
			c h^{\frac32+s}\norm{u_d}_{H^1(\Omega)}&\quad\text{for } 0<s<s_\Omega,\\
			c h^{2}\abs{\ln h}\norm{u_d}_{H^1(\Omega)}&\quad\text{for }\lambda_\Omega >\frac32,
		\end{cases}
		\]
		with $s_\Omega$ from \cref{eq:s_Omega} and $\lambda_\Omega$ from \cref{eq:lambda_Omega}.
	\end{enumerate}
\end{theorem}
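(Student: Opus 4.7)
The plan is to exploit the coercivity of $j_h''$, the fact that the problem is purely quadratic, the continuous first-order condition, and the previously established consistency and projection lemmas. Since $j_h$ is quadratic, $j_h''$ is a constant bilinear form and
\[
j_h'(\bar q)(\bar q-\bar q_h)-j_h'(\bar q_h)(\bar q-\bar q_h)=j_h''(\bar q-\bar q_h,\bar q-\bar q_h)\ge \alpha\,\norm{\bar q-\bar q_h}_{L^2(\partial\Omega)}^2.
\]
Using the continuous optimality condition $j'(\bar q)(\bar q-\bar q_h)=0$ (\cref{eq:opt_cond}) I rewrite this as
\[
\alpha\norm{\bar q-\bar q_h}_{L^2(\partial\Omega)}^2\le \bigl(j_h'(\bar q)-j'(\bar q)\bigr)(\bar q-\bar q_h)-j_h'(\bar q_h)(\bar q-\bar q_h).
\]

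For the first term on the right I invoke \cref{lemma:fem_derivatives2} (part~1 of the theorem, when $u_d\in L^2(\Omega)$) or \cref{lemma:fem_derivatives} (part~2, when $u_d\in H^1(\Omega)$), in each case with test direction $\delta q=\bar q-\bar q_h\in L^2(\partial\Omega)$. This produces a bound of the form $c\,h^{\sigma}\norm{u_d}_{*}\norm{\bar q-\bar q_h}_{L^2(\partial\Omega)}$ with $\sigma\in\{\frac32,\frac32+s,2\,\text{(times }\lh\text{)}\}$. A standard Young-type absorption then puts half of $\alpha\norm{\bar q-\bar q_h}^2_{L^2(\partial\Omega)}$ on the left-hand side and leaves a remainder of the squared target order.

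The remaining term $j_h'(\bar q_h)(\bar q-\bar q_h)$ is the one that distinguishes the two discretization concepts. If $Q_h=V_h^\partial$, the optimality condition \cref{eq:gradienteq} implies $\alpha\bar q_h+\bar z_h|_{\partial\Omega}=0$ pointwise on $\partial\Omega$, so that the second part of \cref{lemma:jhl2projection} applies with any $\delta q\in L^2(\partial\Omega)$; consequently $j_h'(\bar q_h)(\bar q-\bar q_h)=0$ and the bound from the previous paragraph is the full estimate, yielding directly $h^{3/2}$, $h^{3/2+s}$, respectively $h^2\lh$. If $Q_h=Q_h^0$, I insert the $L^2(\partial\Omega)$-projection and split
\[
j_h'(\bar q_h)(\bar q-\bar q_h)=j_h'(\bar q_h)(\bar q-\pi_h\bar q)+j_h'(\bar q_h)(\pi_h\bar q-\bar q_h);
\]
the second summand vanishes by the discrete optimality \cref{eq:vargradienteq} since $\pi_h\bar q-\bar q_h\in Q_h$, while the first summand is controlled by the first part of \cref{lemma:jhl2projection} applied with $\delta q=\bar q$, giving $c\,h^2\norm{u_d}_{L^2(\Omega)}\norm{\bar q}_{H^1(\partial\Omega)}\le c\,h^2\norm{u_d}_{L^2(\Omega)}^2$, where the $H^1(\partial\Omega)$-regularity of $\bar q$ comes from \cref{NeumannCon:theorem:opt_sys}.

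Combining the two contributions and absorbing via Young yields, in the $Q_h^0$ case, $\alpha\norm{\bar q-\bar q_h}^2\le \tfrac{\alpha}{2}\norm{\bar q-\bar q_h}^2+c\,h^3\norm{u_d}^2+c\,h^2\norm{u_d}^2$, hence the first-order rate $h$. In the $V_h^\partial$ cases no projection error enters, so the asymptotic order of the consistency lemma directly governs the rate. I do not anticipate a genuine obstacle; the main care is simply to match regularity: using the $L^2$-estimate of \cref{lemma:fem_derivatives2} (rather than its $H^{1/2}$-variant) so that the test direction $\bar q-\bar q_h$ only needs $L^2(\partial\Omega)$-control, and extracting the extra half order on the boundary for the case $u_d\in H^1(\Omega)$ from \cref{FEM:Corollary:u-uhboundary1,FEM:Corollary:u-uhboundary2} via \cref{lemma:fem_derivatives}.
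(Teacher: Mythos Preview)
Your proposal is correct and follows essentially the same route as the paper's proof: the coercivity of $j_h''$ combined with the quadratic structure gives the basic inequality, the first-order conditions for $\bar q$ and $\bar q_h$ are inserted exactly as you describe, and the two resulting terms are handled via \cref{lemma:fem_derivatives2}/\cref{lemma:fem_derivatives} and \cref{lemma:jhl2projection} together with the regularity from \cref{NeumannCon:theorem:opt_sys}. The only cosmetic difference is in sign and ordering conventions when splitting $j_h'(\bar q_h)(\bar q-\bar q_h)$ via $\pi_h\bar q$, which is immaterial since \cref{lemma:jhl2projection} bounds the absolute value.
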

\begin{proof}
	Due to the coercivity of the second order directional derivatives and the linear quadratic structure of the optimal control problems we obtain
	\begin{align*}
		\alpha \norm{\bar q-\bar q_h}_{L^2(\partial\Omega)}^2&\le j_h''(q)(\bar q-\bar q_h,\bar q-\bar q_h)=j_h'(\bar q)(\bar q-\bar q_h)-j_h'(\bar q_h)(\bar q-\bar q_h)=I+II
	\end{align*}
	where $q\in L^2(\partial\Omega)$ is arbitrary.
	The first term can be estimated as follows: Due to the optimality of $\bar q$ we get
	\[
		I=j_h'(\bar q)(\bar q-\bar q_h)-j'(\bar q)(\bar q-\bar q_h).
	\]
	This term was estimated in \cref{lemma:fem_derivatives2} and \cref{lemma:fem_derivatives}. To be more precise, for $u_d\in L^2(\Omega)$ we obtain
	\[
		I\le c h^{\frac32}\norm{u_d}_{L^2(\Omega)}\norm{\bar q-\bar q_h}_{L^2(\partial\Omega)}
	\]
	independent of the discretization strategy.
	If $u_d$ belongs to $H^1(\Omega)$ we get
	\[
		I\le 
		\begin{cases}
			c h^{\frac32+s}\norm{u_d}_{H^1(\Omega)}\norm{\bar q-\bar q_h}_{L^2(\partial\Omega)}&\quad\text{for } 0<s<s_\Omega,\\
			c h^{2}\abs{\ln h}\norm{u_d}_{H^1(\Omega)}\norm{\bar q-\bar q_h}_{L^2(\partial\Omega)}&\quad\text{for }\lambda_\Omega >\frac32.
		\end{cases}
	\]
	Let us now consider the second term:
	By the optimality of $\bar q_h$, we obtain
	\[
	II=j_h'(\bar q_h)(\bar q_h-\bar q)=j_h'(\bar q_h)(\bar q_h - \pi_h\bar q)+j_h'(\bar q_h)(\pi_h\bar q- \bar q)=j_h'(\bar q_h)(\pi_h\bar q - \bar q).
	\]
	Next, we use \cref{lemma:jhl2projection} in combination with the regularity results from \cref{NeumannCon:theorem:opt_sys}. This yields
	\[
	j_h'(\bar q_h)(\pi_h\bar q - \bar q)\le \begin{cases}
		c h^2 \norm{u_d}_{L^2(\Omega)}^2 &\text{if }Q_h=Q_h^0,\\
		0 & \text{if }Q_h=V_h^\partial.
	\end{cases}
	\]
	Finally, the assertion follows from the estimates for $I$ and $II$ using Young's inequality.
\end{proof}
The following theorem is concerned with optimal convergence rates for the states. The proof employs a duality argument and relies on the fact that no control bounds are present. A similar argument was used in \cite{MayRannacherVexler:2013} to show higher rates for the states in case of Dirichlet boundary control problems. However, there the duality argument was accomplished on the whole optimality system, whereas in the present case, we work with the gradient equations only.
\begin{theorem}\label{theorem:main2}
	Let $\Omega \subset \R^3$ be an arbitrary convex polyhedral domain and let $u_d\in L^2(\Omega)$. Let $\bar q$ be the solution of \cref{NeumannCon:eq:reduced_problem} with corresponding state $\bar u=S\bar q$. Moreover, let $\bar q_h$ be the solution of \cref{discreteNeumannproblem} with $Q_h=Q_h^0$ or $Q_h=V_h^\partial$, respectively, and let $\bar u_h=S_h\bar q_h$ be the corresponding discrete state. Then there holds the estimate
	\[
		\norm{\bar u - \bar u_h}_{L^2(\Omega)}\le c h^2 \norm{u_d}_{L^2(\Omega)}
	\]
	with a constant $c>0$ only depending on $\Omega$.
\end{theorem}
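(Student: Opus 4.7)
The plan is a duality argument centered on the auxiliary PDE with right-hand side $e := \bar u - \bar u_h$. Let $w \in H^1(\Omega)$ solve $-\Delta w + w = e$ in $\Omega$ with homogeneous Neumann data; by \cref{theorem:regularity_state_adjoint} one has $w \in H^2(\Omega)$ with $\|w\|_{H^2(\Omega)} \le c\|e\|_{L^2(\Omega)}$. This $w$ may be viewed as the adjoint state of an auxiliary optimal control problem with desired state $e$. Testing the defining equation of $w$ by $e$, then using the state equation for $\bar u$ with test function $w$, and the Galerkin orthogonality together with the discrete state equation for $\bar u_h$ tested by $R_h w \in \widehat V_h$, I would reduce the $L^2$-state error to the boundary:
\[
\|e\|_{L^2(\Omega)}^2 = (\bar q - \bar q_h, w)_{\partial\Omega} + (\bar q_h, w - R_h w)_{\partial\Omega}.
\]

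Next I would substitute the gradient equations $\bar q = -\alpha^{-1}\bar z|_{\partial\Omega}$ and $\bar q_h = -\alpha^{-1}\pi_h \bar z_h|_{\partial\Omega}$; the two intermediate boundary terms involving $(\pi_h \bar z_h, w)_{\partial\Omega}$ cancel, leaving
\[
\alpha\|e\|_{L^2(\Omega)}^2 = (\pi_h \bar z_h, R_h w)_{\partial\Omega} - (\bar z, w)_{\partial\Omega}.
\]
A key structural observation is that $R_h \bar z$ and $\bar z_h$ are the Galerkin solutions of the discrete adjoint equation with right-hand sides $\bar u - u_d$ and $\bar u_h - u_d$, respectively, so their difference is the Galerkin approximation of the problem with right-hand side $e$, namely $R_h w$; hence $\bar z_h = R_h \bar z - R_h w$. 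Substituting this and using $(\pi_h R_h w, R_h w)_{\partial\Omega} = \|\pi_h R_h w\|_{L^2(\partial\Omega)}^2$, I would arrive at the master identity
\[
\alpha\|e\|_{L^2(\Omega)}^2 + \|\pi_h R_h w\|_{L^2(\partial\Omega)}^2 = (\pi_h R_h \bar z - R_h \bar z, R_h w)_{\partial\Omega} + (R_h \bar z - \bar z, R_h w)_{\partial\Omega} + (\bar z, R_h w - w)_{\partial\Omega}.
\]

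It remains to bound each of the three terms on the right by $ch^2\|u_d\|_{L^2(\Omega)}\|e\|_{L^2(\Omega)}$. The last two are the straightforward ones: I pair the $H^{-1/2}(\partial\Omega)$-bound of \cref{FEM:lemma:u-uhboundary_negative}, applied once to $\bar z$ (with $L^2$-right-hand side $\bar u - u_d$) and once to $w$ (with $L^2$-right-hand side $e$), against the $H^{1/2}(\partial\Omega)$-traces of $\bar z$ and $R_h w$; the latter is controlled by $H^1$-stability of the Ritz projection followed by the trace theorem. The main obstacle is the first term, and here the two discretization concepts must be handled separately. If $\bar q_h \in V_h^\partial$ then $R_h \bar z|_{\partial\Omega} \in V_h^\partial = Q_h$, hence $\pi_h R_h \bar z = R_h \bar z$ on $\partial\Omega$ and the term vanishes outright. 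If $\bar q_h \in Q_h^0$ then $R_h \bar z - \pi_h R_h \bar z$ is $L^2(\partial\Omega)$-orthogonal to $Q_h^0$, and since $\pi_h R_h w \in Q_h^0$ I may rewrite the term as $-(R_h \bar z - \pi_h R_h \bar z, R_h w - \pi_h R_h w)_{\partial\Omega}$ and apply Cauchy--Schwarz. A triangle inequality through the continuous function $v \in \{\bar z, w\}$, combined with the $L^2(\partial\Omega)$-bound from \cref{FEM:lemma:u-uhboundary_negative} and the standard $O(h)$ estimate for the $L^2$-projection onto piecewise constants, shows that $\|(I - \pi_h) R_h v\|_{L^2(\partial\Omega)}$ is of order $h$ times the appropriate data norm, so the product is of order $h^2$. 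Dropping the nonnegative term on the left and dividing by $\|e\|_{L^2(\Omega)}$ then yields $\|e\|_{L^2(\Omega)} \le c h^2 \|u_d\|_{L^2(\Omega)}$.
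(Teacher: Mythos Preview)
Your argument is correct, and it reaches the same conclusion, but by a genuinely different route than the paper. The paper introduces an auxiliary \emph{optimal control problem} with desired state $w_d=e$, denotes by $\bar w,\bar w_h$ its continuous and discrete optimal controls, and reduces $\|e\|_{L^2(\Omega)}^2$ to a combination of $l_h'(\bar w_h)(\pi_h\bar q-\bar q)$, $j_h'(\bar q)(\bar w_h-\bar w)-j'(\bar q)(\bar w_h-\bar w)$, and $j_h'(\bar q)(\bar w)-j'(\bar q)(\bar w)$; these are then bounded by invoking \cref{lemma:jhl2projection}, \cref{lemma:fem_derivatives2}, and, crucially, the control error estimate of \cref{theorem:main1}. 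You instead work with a single auxiliary \emph{PDE} $-\Lap w+w=e$ and exploit the algebraic identity $\bar z_h=R_h\bar z-R_hw$ to obtain a master identity that is estimated directly via \cref{FEM:lemma:u-uhboundary_negative} and elementary projection bounds, with the orthogonality trick $(\pi_hR_h\bar z-R_h\bar z,R_hw)=-(R_h\bar z-\pi_hR_h\bar z,R_hw-\pi_hR_hw)$ handling the piecewise constant case. Your route is more self-contained (it does not rely on \cref{theorem:main1} or on the derivative-error lemmas) and arguably more transparent; the paper's route is more modular, reusing the preceding optimal-control error analysis as a black box and thereby demonstrating how the state estimate follows from the control estimate.
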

\begin{proof}
We define the functionals $l:Q\to\R$ and $l_h:Q\to\R$ by 
\[
	l(w)= \frac12\norm{Sw-w_d}_{L^2(\Omega)}^2+\frac\alpha2\norm{w}_{L^2(\partial\Omega)}^2
\]
and
\[
	l_h(w)= \frac12\norm{S_hw-w_d}_{L^2(\Omega)}^2+\frac\alpha2\norm{w}_{L^2(\partial\Omega)}^2,
\]
respectively, with $w_d=\bar u - \bar u_h=S\bar q-S_h\bar q_h$. Next, let $\bar w\in Q$ be the unique solution to the optimization problem
\[
\min_{w\in Q}l(w)
\]
and let $\bar w_h\in Q_h$ be the unique solution to the corresponding discrete optimization problem
\[
\min_{w_h\in Q_h}l_h(w_h).
\]
Obviously, $l$ and $l_h$ coincide with $j$ and $j_h$, respectively, if $u_d=w_d$. Consequently, all the estimates for $j'$ and $j_h'$ are also valid for $l'$ and $l_h'$. Moreover, estimates from \cref{theorem:main1} are also applicable for $\bar w$ and $\bar w_h$. We now apply the duality argument to derive the estimate from the assertion.
Direct calculations yield
\begin{align*}
	\norm{S\bar q-S_h\bar q_h}_{L^2(\Omega)}^2
	=(w_d,S\bar q-S_h\bar q_h)=(w_d,(S-S_h)\bar q)+(w_d,S_h(\bar q-\bar q_h)).
\end{align*}
The first term can be estimated by \cref{theorem:standard_error_est_H2} and \cref{NeumannCon:theorem:opt_sys} resulting in
\[
	(w_d,(S-S_h)\bar q)\le \norm{w_d}_{L^2(\Omega)}\norm{(S-S_h)\bar q}_{L^2(\Omega)}\le c h^2  \norm{w_d}_{L^2(\Omega)} \norm{u_d}_{L^2(\Omega)}.
\]
Let us consider the second term. We introduce another intermediate term, i.e.,
\[
	(w_d,S_h(\bar q-\bar q_h))=(w_d,S_h(\bar q-\pi_h\bar q))+(w_d,S_h(\pi_h\bar q-\bar q_h)).
\]
Due to the optimality of $\bar w_h$ and $\bar q_h$, we get
\begin{align*}
	(w_d,S_h(\pi_h\bar q-\bar q_h))&=(S_h\bar w_h,S_h(\pi_h\bar q-\bar q_h)) + \alpha(\bar w_h,\pi_h\bar q-\bar q_h)_{\partial\Omega}\\
	&=j_h'(\pi_h\bar q)(\bar w_h)-j_h'(\bar q_h)(\bar w_h)\\
	&=j_h'(\pi_h\bar q)(\bar w_h).
\end{align*}
The definition of $j_h'$ and $l_h'$ implies
\begin{align*}
	j_h'(\pi_h\bar q)(\bar w_h)&=j_h'(\pi_h\bar q)(\bar w_h)-j_h'(\bar q)(\bar w_h)+j_h'(\bar q)(\bar w_h)\\
	&=(w_d,S_h(\pi_h\bar q-\bar q))+l_h'(\bar w_h)(\pi_h \bar q-\bar q)+j_h'(\bar q)(\bar w_h).
\end{align*}
In summary, we arrive at
\[
	(w_d,S_h(\bar q-\bar q_h))=l_h'(\bar w_h)(\pi_h \bar q-\bar q)+j_h'(\bar q)(\bar w_h).
\]
Using the optimality of $\bar q$ we get
\[
	(w_d,S_h(\bar q-\bar q_h))=l_h'(\bar w_h)(\pi_h\bar q-\bar q)+j_h'(\bar q)(\bar w_h-\bar w)-j'(\bar q)(\bar w_h-\bar w)+j_h'(\bar q)(\bar w)-j'(\bar q)(\bar w).
\]
For the first term, we obtain by means of \cref{lemma:jhl2projection} and the regularity results from \cref{NeumannCon:theorem:opt_sys}
\[
	\abs{l_h'(\bar w_h)(\pi_h\bar q-\bar q)}\le \begin{cases}
		c h^2 \norm{w_d}_{L^2(\Omega)}\norm{u_d}_{L^2(\Omega)} &\text{if }Q_h=Q_h^0,\\
		0 & \text{if }Q_h=V_h^\partial.
	\end{cases}
\]
The other terms were estimated in \cref{lemma:fem_derivatives2} yielding
\[
	j_h'(\bar q)(\bar w_h-\bar w)-j'(\bar q)(\bar w_h-\bar w)\le ch^{\frac32}\norm{\bar w_h-\bar w}_{L^2(\partial\Omega)}\norm{u_d}_{L^2(\Omega)}
\]
and
\[
	j_h'(\bar q)(\bar w)-j'(\bar q)(\bar w)\le ch^2\norm{\bar w}_{H^{\nicefrac{1}{2}}(\partial\Omega)}\norm{u_d}_{L^2(\Omega)}.
\]
Using the estimate for $\bar w-\bar w_h$ from \cref{theorem:main1} for $w_d\in L^2(\Omega)$ and the regularity estimate from \cref{NeumannCon:theorem:opt_sys} results in
\[
	j_h'(\bar q)(\bar w_h-\bar w)-j'(\bar q)(\bar w_h-\bar w)+j_h'(\bar q)(\bar w)-j'(\bar q)(\bar w)\le c h^2\norm{w_d}_{L^2(\Omega)}\norm{u_d}_{L^2(\Omega)}.
\]
Collecting the different results and dividing by $\norm{w_d}_{L^2(\Omega)}$ yield the assertion.
\end{proof}

\section{Numerical results}\label{Sec:6}
In this section, we present a numerical example confirming the convergence rates which we have proven in the previous section.
To be able to state an example with a known solution, we consider the optimal control problem \cref{NeumannCon:eq:problem} with a slightly modified state equation, i.e.,  we consider
\[
\begin{aligned}
	-\Lap u + u &= f &\quad&\text{in } \Omega_\omega,\\
	\partial_n u &=q+g &\quad&\text{on } \partial \Omega_\omega
\end{aligned}
\]
in the domain $\Om_\omega\subset\R^3$ given as
\[
\Om_\omega=\left((-1,1)^2\cap \Set{(r\cos\varphi,r\sin\varphi)^T |
	r\in(0,\infty),~\varphi\in(0,\omega)}\right)\times(0,1)
\]
for $\omega=\omega_\Om \in[\frac\pi2,\pi)$, see \cref{NeumannCon:fig:Omega3D}. Here, we use cylindrical coordinates $(r,\varphi,x_3)$ of
$x=(x_1,x_2,x_3)^T\in\R^3$, where $r$ and $\varphi$ denote the polar coordinates of $(x_1,x_2)^T\in\R^2$ given by
\[
r(x)=\sqrt{x_1^2+x_2^2}\quad\text{and}\quad \varphi(x)=\operatorname{arctan2}(x_2,x_1).
\] 
The optimal state and adjoint state are chosen as
\[
\ou(x)=\oz(x)=r(x)^\lambda \cos(\lambda\phi(x))\eta(r(x))\cos(\pi x_3),
\]
with $\lambda=\frac\pi\omega$ and $\eta$ being a cut-off function given by
\[
\eta(r)=\begin{cases}
	(1+3r)(1-r)^3,&\text{for }0\le r\le 1,\\
	0,&\text{for }r>1.
\end{cases}
\]

\tdplotsetmaincoords{70}{-10}

\begin{figure}
	\centering
	\begin{tikzpicture}[scale=1.9, tdplot_main_coords]
		\draw (0,0,1) -- (1,0,1) -- (1,1,1) -- (0,1,1) -- cycle;
		\draw (0,0,0) -- (1,0,0);
		\draw[dashed] (1,0,0) -- (1,1,0) -- (0,1,0);
		\draw (0,1,0) -- (0,0,0);
		\draw (0,0,0) -- (0,0,1);
		\draw (1,0,0) -- (1,0,1);
		\draw[dashed] (1,1,0) -- (1,1,1);
		\draw (0,1,0) -- (0,1,1);
		\node at (0.5,0.5,0.5) {$\Om_{\frac\pi2}$};
	\end{tikzpicture}\qquad
	\begin{tikzpicture}[scale=1.9, tdplot_main_coords]
		\draw (0,0,1) -- (1,0,1) -- (1,1,1) -- (-0.57735026919,1,1) -- cycle;
		\draw (0,0,0) -- (1,0,0);
		\draw[dashed] (1,0,0) -- (1,1,0) -- (-0.57735026919,1,0);
		\draw (-0.57735026919,1,0) -- (0,0,0);
		\draw (0,0,0) -- (0,0,1);
		\draw (1,0,0) -- (1,0,1);
		\draw[dashed] (1,1,0) -- (1,1,1);
		\draw (-0.57735026919,1,0) -- (-0.57735026919,1,1);
		\node at (0.3558,0.5,0.5) {$\Om_{\frac{2\pi}3}$};
	\end{tikzpicture}\qquad
	\begin{tikzpicture}[scale=1.9, tdplot_main_coords]
		\draw (0,0,1) -- (1,0,1) -- (1,1,1) -- (-1,1,1) -- cycle;
		\draw (0,0,0) -- (1,0,0);
		\draw[dashed] (1,0,0) -- (1,1,0) -- (-1,1,0);
		\draw (-1,1,0) -- (0,0,0);
		\draw (0,0,0) -- (0,0,1);
		\draw (1,0,0) -- (1,0,1);
		\draw[dashed] (1,1,0) -- (1,1,1);
		\draw (-1,1,0) -- (-1,1,1);
		\node at (0.35,0.5,0.5) {$\Om_{\frac{3\pi}4}$};
	\end{tikzpicture}
	\caption{Domain $\Om_\omega$ for
		$\omega\in\set{\frac\pi2,\frac{2\pi}3,\frac{3\pi}4}$ and $N=3$}\label{NeumannCon:fig:Omega3D}
\end{figure}
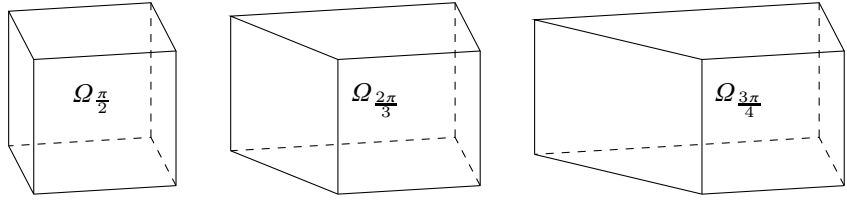

We can directly verify that $\ou$ and $\oz$ fulfill homogeneous Neumann boundary conditions on $\partial \Om_\omega$. Moreover, we set
\[
	\oq=-\oz\bigr\rvert_{\partial\Om_\omega},\quad f=-\Lap\ou+\ou,\quad g=-\oq, \quad u_d=\ou+\Lap\oz-\oz.
\]
Direct calculations show that those functions fulfill the accordingly modified optimality condition of \cref{NeumannCon:theorem:opt_sys} for $\alpha=1$.

In \cref{NeumannCon:fig:qerror3D}, we present the development of the error $\ltwonormdo{\oq-\oq_h}$
for $\omega\in\set{\frac\pi2,\frac{2\pi}3,\frac{3\pi}4}$ and $h$ tending to zero. We exactly observe the orders of convergence predicted by
\cref{theorem:main1}. In case of $Q_h=Q_h^0$, we get a convergence rate of $1$ regardless of the value
of $\omega$. For $Q_h=V_h^\partial$, we obtain a convergence order of $\frac{11}{6}$, if
$\omega=\frac{3\pi}{4}$, and convergence order of $2$ if $\omega=\frac{2\pi}{3}$ or $\omega =
\frac{\pi}{2}$, respectively. In \cref{NeumannCon:fig:uerror3D}, we present the development of the error
$\ltwonormdo{\ou-\ou_h}$ for $\omega\in\set{\frac\pi2,\frac{2\pi}3,\frac{3\pi}4}$ and $h$ tending to
zero. As proven in \cref{theorem:main2} we observe a convergence rate of $2$ regardless of the chosen discretization strategy and regardless of the angle $\omega$.
\begin{figure}
	\begin{tikzpicture}
		\begin{loglogaxis} [%
			xlabel={mesh size $h$},
			grid=major,
			legend pos=south east,
			legend cell align=left,
			width=\textwidth,
			height=0.6\textwidth,
			cycle list name=black white,
			]
			
			\addplot table [x=h, y=eq, col sep=semicolon] {errors3D_cell_12.csv};
			\addlegendentry{$\omega=\frac\pi2$, $Q_h=Q_h^0$}
			\addplot table [x=h, y=eq, col sep=semicolon] {errors3D_cell_23.csv};
			\addlegendentry{$\omega=\frac{2\pi}3$, $Q_h=Q_h^0$}
			\addplot table [x=h, y=eq, col sep=semicolon] {errors3D_cell_34.csv};
			\addlegendentry{$\omega=\frac{3\pi}4$, $Q_h=Q_h^0$}
			
			\addplot table [x=h, y=eq, col sep=semicolon] {errors3D_node_12.csv};
			\addlegendentry{$\omega=\frac\pi2$, $Q_h=V_h^\partial$}
			\addplot table [x=h, y=eq, col sep=semicolon] {errors3D_node_23.csv};
			\addlegendentry{$\omega=\frac{2\pi}3$, $Q_h=V_h^\partial$}
			\addplot table [x=h, y=eq, col sep=semicolon] {errors3D_node_34.csv};
			\addlegendentry{$\omega=\frac{3\pi}4$, $Q_h=V_h^\partial$}
			
			\logLogSlopeTriangle{0.25}{0.15}{0.47}{1}{black}{h}
			\logLogSlopeTriangleInverse{0.23}{0.15}{0.36}{1.833333}{black}{h^{\frac{11}6}}
			\logLogSlopeTriangle{0.25}{0.15}{0.073}{2}{black}{h^2}
		\end{loglogaxis}
	\end{tikzpicture}
	\caption{Error $\ltwonormdo{\oq-\oq_h}$ for $N=3$}\label{NeumannCon:fig:qerror3D}
\end{figure}

\begin{figure}
	\begin{tikzpicture}
		\begin{loglogaxis} [%
			xlabel={mesh size $h$},
			grid=major,
			legend pos=south east,
			legend cell align=left,
			width=\textwidth,
			height=0.6\textwidth,
			cycle list name=black white,
			]
			
			\addplot table [x=h, y=eu, col sep=semicolon] {errors3D_cell_12.csv};
			\addlegendentry{$\omega=\frac\pi2$, $Q_h=Q_h^0$}
			\addplot table [x=h, y=eu, col sep=semicolon] {errors3D_cell_23.csv};
			\addlegendentry{$\omega=\frac{2\pi}3$, $Q_h=Q_h^0$}
			\addplot table [x=h, y=eu, col sep=semicolon] {errors3D_cell_34.csv};
			\addlegendentry{$\omega=\frac{3\pi}4$, $Q_h=Q_h^0$}
			
			\addplot table [x=h, y=eu, col sep=semicolon] {errors3D_node_12.csv};
			\addlegendentry{$\omega=\frac\pi2$, $Q_h=V_h^\partial$}
			\addplot table [x=h, y=eu, col sep=semicolon] {errors3D_node_23.csv};
			\addlegendentry{$\omega=\frac{2\pi}3$, $Q_h=V_h^\partial$}
			\addplot table [x=h, y=eu, col sep=semicolon] {errors3D_node_34.csv};
			\addlegendentry{$\omega=\frac{3\pi}4$, $Q_h=V_h^\partial$}
			\logLogSlopeTriangle{0.25}{0.15}{0.073}{2}{black}{h^2}
		\end{loglogaxis}
	\end{tikzpicture}
	\caption{Error $\ltwonorm{\ou-\ou_h}$ for $N=3$}\label{NeumannCon:fig:uerror3D}
\end{figure}

\section{Acknowledgment}\label{sec7} We would like to thank Dr. Dominik Meidner for scientific exchange and for helping us to prepare the numerical examples.

\backmatter

\bibliography{lit}

\end{document}